\newcommand{\C}{\mathbb{C}}
\newcommand{\ZZ}{\mathcal{Z}}
\newcommand{\QQ}{\mathbb{Q}}
\newcommand{\NN}{\mathbb{N}}
\newcommand{\PP}{\mathbb{P}}
\newcommand{\GG}{\mathcal G}
\newcommand{\YY}{\mathcal Y}
\newcommand{\CC}{\mathcal C}
\newcommand{\MM}{\mathcal M}
\newcommand{\FF}{\mathcal F}
\newcommand{\wt}{\widetilde}
\newcommand{\one}{\mathds{1}}
\DeclareMathOperator{\ide}{id}
\DeclareMathOperator{\ima}{Im}
\DeclareMathOperator{\Gr}{Gr}
\newtheorem{theorem}{Theorem}[section]
\newtheorem{lemma}[theorem]{Lemma}
\newtheorem{corollary}[theorem]{Corollary}
\newtheorem{proposition}[theorem]{Proposition}
\newtheorem{conjecture}[theorem]{Conjecture}
\newtheorem{remark}[theorem]{Remark}
\newtheorem{convention}{Conventions}
\newtheorem{notation}[theorem]{Notation}
\newtheorem{nonumbering}{Theorem}
\newtheorem{nonumberingc}{Corollary}
\newtheorem{nonumberingt}{Acknowledgements}
\newtheorem{nonumberingcon}{Statements and declarations}
\begin{document}

\author[Michele Bolognesi]{Michele Bolognesi}
\address{Institut Montpellierain Alexander Grothendieck \\ %
Universit\'e de Montpellier \\ %
CNRS \\ %
Case Courrier 051 - Place Eug\`ene Bataillon \\ %
34095 Montpellier Cedex 5 \\ %
France}
\email{michele.bolognesi@umontpellier.fr}

\author[Robert Laterveer]
{Robert Laterveer}

\address{Institut de Recherche Math\'ematique Avanc\'ee,
CNRS -- Universit\'e 
de Strasbourg,\
7 Rue Ren\'e Des\-car\-tes, 67084 Strasbourg CEDEX,
FRANCE.}
\email{robert.laterveer@math.unistra.fr}

\title{A 9-dimensional family of K3 surfaces with finite-dimensional motive}

\begin{abstract} Let $S$ be a K3 surface obtained as triple cover of a quadric branched along a genus 4 curve. Using the relation with cubic fourfolds, we show that $S$
 has finite-dimensional motive, in the sense of Kimura. We also establish the Kuga--Satake Hodge conjecture for $S$, as well as Voisin's conjecture concerning zero-cycles.
 As a consequence, 
we obtain Kimura finite-dimensionality, the Kuga--Satake Hodge conjecture, and Voisin's conjecture for 2 (9-dimensional) irreducible components of the moduli space of K3 surfaces with an order 3 non-symplectic automorphism.
 \end{abstract}

\thanks{\textit{2020 Mathematics Subject Classification:}  14C15, 14C25, 14C30}
\keywords{Algebraic cycles, Chow group, motive, Kimura--O'Sullivan finite-dimensionality, Kuga--Satake correspondence, K3 surface, cubic fourfold}
\thanks{MB and RL are supported by ANR grant ANR-20-CE40-0023.}


\maketitle

\section{Introduction}

The interplay between cubic fourfolds and K3 surfaces has since long been an important source of new, beautiful geometric constructions and results. Many crucial conjectures (e.g. \cite{kuz}) turn around the relations between these objects, and most interesting invariants of the one are related to those of the other (\cite{has}, \cite{BP}). However, heuristically speaking, the stream of information has almost always been flowing from the \it K3 side \rm to the \it cubic fourfold side. \rm This is quite natural, since somehow K3 surfaces have been an important object of research for longer than cubic fourfolds, and are in many respects better understood.

In this paper we reverse a bit the perspective: by using some recent results about the motives of cyclic cubic fourfolds we obtain a proof of the finite-dimensionality of the motive of certain K3 surfaces. Indeed, the goal of this paper is to show the following theorem.

\begin{nonumbering}[=Theorem \ref{main}] Let $S\subset\PP^4$ be a K3 surface defined as a smooth complete intersection with equations
  \[ \begin{cases}  &f(x_0,\ldots,x_3)=0\\
                            &g(x_0,\ldots,x_3)+ x_4^3=0\ ,\\
                            \end{cases}\]
                       where $f$ and $g$ are homogeneous polynomials of degree 2 resp. 3. Then $S$ has finite-dimensional motive, in the sense of Kimura \cite{Kim}.
                 \end{nonumbering}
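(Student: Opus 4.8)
The plan is to realize $S$ as the K3 surface associated to a cyclic cubic fourfold, and then to transport finite-dimensionality from the cubic fourfold --- ultimately from a cubic threefold, whose motive is of abelian type --- back to $S$; thus the flow of information is reversed with respect to the introduction.

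To $S$ one associates the cubic hypersurface $Y\subset\PP^5$, with homogeneous coordinates $x_0,\dots,x_3,x_4,x_5$, defined by
\[
x_4\,f(x_0,\dots,x_3)+g(x_0,\dots,x_3)-x_5^3=0 .
\]
A direct computation shows that $Y$ has a single singular point, $P=[0:0:0:0:1:0]$, and that the order $3$ automorphism $\sigma\colon x_5\mapsto\zeta_3 x_5$ of $\PP^5$ preserves $Y$ and fixes $P$; the quotient presents $Y$ as the cyclic triple cover of $\PP^4$ (coordinates $x_0,\dots,x_4$) branched along the cubic threefold $V=\{x_4 f+g=0\}$, which has a single node at $[0:0:0:0:1]$. (Smoothness of $S$, equivalently of the genus $4$ curve $C=Q\cap\{g=0\}$ with $Q=\{f=0\}\subset\PP^3$, guarantees there are no further singularities.)

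Next I would resolve and compare. Let $\widetilde Y\to Y$ be the blow-up of $P$; one checks that $\widetilde Y$ is smooth and that projection from $P$ identifies it with $\mathrm{Bl}_S\PP^4$ --- concretely, the lines through $P$ that lie on $Y$ sweep out precisely the cone over $\{f=0,\ x_5^3=g\}\subset\PP^4$, a surface projectively equivalent to $S$. Manin's blow-up formula then gives an isomorphism of Chow motives
\[
h(\widetilde Y)\ \cong\ h(\PP^4)\ \oplus\ h(S)(-1),
\]
so $h(S)(-1)$ is a direct summand of $h(\widetilde Y)$; as Kimura finite-dimensionality passes to direct summands and is preserved by Tate twists, it now suffices to show $h(\widetilde Y)$ is finite-dimensional. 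The analogous construction for $V$ identifies $\widetilde V:=\mathrm{Bl}_{[0:\cdots:0:1]}V$ with $\mathrm{Bl}_C\PP^3$, whence $h(\widetilde V)\cong h(\PP^3)\oplus h(C)(-1)$ is finite-dimensional, $C$ being a curve (its motive is assembled from Tate motives and $h^1$ of the abelian variety $\mathrm{Jac}(C)$).

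Finally, $\sigma$ lifts to $\widetilde Y$ (as $P$ is $\sigma$-fixed), realizing $\widetilde Y$ as a resolution of a cyclic cubic fourfold with branch cubic threefold $V$. Invoking the recent results on the motives of cyclic cubic fourfolds, $h(\widetilde Y)$ is finite-dimensional: the $\sigma$-invariant summand has only Tate cohomology (being the motive of a rational quotient fourfold), while the complementary summand --- which carries the transcendental cohomology --- is governed by the motive of the branch cubic threefold $V$, finite-dimensional because of abelian type (here through the genus $4$ curve $C$, equivalently the intermediate Jacobian). Hence $h(\widetilde Y)$, and with it $h(S)$, is finite-dimensional in the sense of Kimura. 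The \emph{main obstacle} is exactly this last step: making the motivic dictionary between a cyclic cubic fourfold and its branch cubic threefold precise --- $\mu_3$-equivariantly, and through the singular resolutions --- which is the substance of the cited results; by comparison, the remaining points (the explicit singular cubic fourfold $Y$, the identification $\widetilde Y\cong\mathrm{Bl}_S\PP^4$, the blow-up formula) are routine. An essentially equivalent alternative runs through the Fano variety of lines of $Y$, birational to the Hilbert square $S^{[2]}$, combined with birational invariance of the Chow motive of hyperk\"ahler fourfolds and the de Cataldo--Migliorini decomposition of $h(S^{[2]})$.
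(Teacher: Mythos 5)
Your geometric reduction is attractive and is genuinely different from the one in the paper: you attach to $S$ the cuspidal cyclic cubic fourfold $Y=\{x_4f+g-x_5^3=0\}$ and use the classical fact that projection from the singular point identifies $\mathrm{Bl}_PY$ with $\mathrm{Bl}_{S}\PP^4$, so that $h(S)(-1)$ is a direct summand of $h(\mathrm{Bl}_PY)$ by Manin's formula. (The paper instead goes through the Fano variety of lines $F(Y)$, which by Boissi\`ere--Heckel--Sarti is singular along $S$ with resolution $S^{[2]}$, and then uses Shen--Vial's theory of the Hilbert square; your route would bypass all of that.) These preliminary steps are correct, including the observation that $P$ is an $A_2$ point whose single blow-up is smooth.

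The gap is in the only step that matters: finite-dimensionality of $h(\mathrm{Bl}_PY)$. You assert that the non-invariant summand of $h(\mathrm{Bl}_PY)$ is ``governed by the motive of the branch cubic threefold $V$'' and that this is ``the substance of the cited results.'' It is not. The known result (Theorem \ref{cyclic}, from \cite{cubic}) concerns \emph{smooth} cyclic cubic fourfolds and proceeds by embedding their motive into that of the associated cyclic cubic \emph{fivefold} $Z=\{g+x_5^3+x_6^3=0\}$ (times an elliptic curve), the fivefold being finite-dimensional because its Chow groups are trivial; there is no motivic dictionary in the literature expressing the transcendental part of a cyclic cubic fourfold in terms of its branch cubic threefold. (If such a dictionary existed, finite-dimensionality of smooth cyclic cubic fourfolds would follow trivially from cubic threefolds being of abelian type, which is emphatically not how the known proof goes; van Geemen--Izadi's half-twist relation with the branch locus is Hodge-theoretic, not a correspondence.) Moreover, even taking the smooth case for granted, your $Y$ is singular, and Kimura finite-dimensionality is \emph{not} known to pass to special fibers of a degeneration: one must exhibit the correspondences realizing the splitting as \emph{generically defined} over a one-parameter family, spread them out, restrict to the central fiber, and control the error terms coming from the singularities of the central fibers (both of $Y_0$ and of the associated cubic fivefold $Z_0$, which acquires an $A_3$ point). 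This spread-and-degenerate argument, carried out with Fulton's operational Chow cohomology on the singular central fiber, is the technical heart of the paper's proof (Proposition \ref{corrida} and Proposition \ref{chowchow}) and is entirely absent from your sketch. Your alternative closing remark (via birational invariance of the motive of hyperk\"ahler fourfolds) does not apply either, since $F(Y)$ is singular, not a hyperk\"ahler fourfold. Finally, note that the paper first reduces to generic $f,g$ by a spread argument (Corollary \ref{spreadkim}); some such reduction is also needed in your setup to guarantee the stated singularity structure of $Y$.
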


To prove this, we use in a substantial way a nice result of Boissière-Heckel-Sarti, who show that certain cuspidal cyclic cubic fourfolds have a Fano variety of lines $F$ that is singular exactly along a $(2,3)$-complete intersection $S$ as in Theorem \ref{main}. Moreover, the blow-up of $F$ along $S$ is smooth and isomorphic to the Hilbert scheme $S^{[2]}$. This allows us to use some motivic technical results and extend the finite-dimensionality of the motive of cyclic cubic fourfolds (known thanks to the work of the second named author \cite{cubic}) to the motive of our family of degree 6 K3 surfaces.

In a similar vein, we establish the Kuga--Satake Hodge conjecture for these K3 surfaces:

\begin{nonumbering}[=Theorem \ref{main2}] Let $S$ be a K3 surface as in Theorem \ref{main}, and let $A$ be its associated Kuga--Satake variety. The injection
  \[ H^2(S,\QQ)\ \hookrightarrow\ H^2(A\times A,\QQ)\]
  is induced by a cycle on $S\times A\times A$.
  \end{nonumbering}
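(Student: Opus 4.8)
\noindent\emph{Proof strategy.} The plan is to transfer the statement from the cyclic cubic fourfold attached to $S$, reusing the correspondences built for Theorem \ref{main}.

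\noindent\textbf{Step 1: reduction to the cubic fourfold.} Let $Y\subset\PP^5$ be the cuspidal cyclic cubic fourfold of Boissi\`ere--Heckel--Sarti whose Fano variety of lines $F$ is singular exactly along $S$, with $\mathrm{Bl}_S F\cong S^{[2]}$. Composing the Beauville--Donagi incidence correspondence on $Y\times F$, the blow-up formula for $\mathrm{Bl}_S F$, and the decomposition of $\mathfrak{h}(S^{[2]})$ in terms of $\mathfrak{h}(S)$ --- the very chain of correspondences used to prove Theorem \ref{main} --- one obtains an algebraic correspondence $\Gamma\in CH^3(Y\times S)_{\QQ}$ inducing an isomorphism of polarized Hodge structures $\Gamma_{*}\colon H^4_{\mathrm{tr}}(Y,\QQ)\xrightarrow{\ \sim\ }H^2_{\mathrm{tr}}(S,\QQ)(-1)$ with algebraic inverse. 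Since the Kuga--Satake abelian variety of a weight-$2$ Hodge structure of K3 type, together with its tautological embedding, depends on that Hodge structure only up to isogeny, the Kuga--Satake variety $A$ of $S$ is isogenous to the Kuga--Satake variety $A_Y$ of $H^4_{\mathrm{tr}}(Y,\QQ)(1)$, and the Kuga--Satake embedding for $S$ on its transcendental part (the algebraic part being handled by the Lefschetz $(1,1)$ theorem) is obtained from that for $Y$ by composing with $\Gamma$ and with the algebraic isogeny $A\sim A_Y$. It therefore suffices to prove the Kuga--Satake Hodge conjecture for $Y$: that the tautological embedding $H^4_{\mathrm{tr}}(Y,\QQ)(1)\hookrightarrow H^2(A_Y\times A_Y,\QQ)$ is induced by an algebraic cycle on $Y\times A_Y\times A_Y$.

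\noindent\textbf{Step 2: the cyclic cubic fourfold.} Here I would combine: $(\mathrm{i})$ the finite-dimensionality of $\mathfrak{h}(Y)$ in the sense of Kimura, which holds for cyclic cubic fourfolds by \cite{cubic} --- its proof there using precisely the cyclic structure, and the statement being unknown for a general cubic fourfold; $(\mathrm{ii})$ the Lefschetz standard conjecture for $Y$ (elementary, since the only non-tautological cohomology of a cubic fourfold is the K3-type piece $H^4_{\mathrm{prim}}$, on which the Lefschetz involution is a scalar), for abelian varieties, and hence --- being stable under products --- for $Y\times A_Y\times A_Y$; and $(\mathrm{iii})$ Andr\'e's theorem that the Kuga--Satake correspondence attached to the K3-type Hodge structure $H^4(Y,\QQ)$ is a motivated cycle. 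The task is then to upgrade the motivated Kuga--Satake cycle on $Y\times A_Y\times A_Y$ to an algebraic one modulo homological equivalence (which is all that the statement requires): one uses the finite-dimensionality of $\mathfrak{h}(Y)$ to refine the Chow--K\"unneth decomposition, to isolate the transcendental summand $\mathfrak{h}^4_{\mathrm{tr}}(Y)$ together with the $\QQ(\zeta_3)$-action coming from the cyclic structure, and to realise the projector onto the image of the Kuga--Satake embedding --- hence the embedding itself --- by an algebraic correspondence, thereby dispensing with the uncontrolled auxiliary varieties that enter Andr\'e's construction for a general K3-type motive. Transporting this cycle back along $\Gamma$ produces the required cycle on $S\times A\times A$, proving Theorem \ref{main2}.

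\noindent\textbf{Expected main obstacle.} Step 1 is routine correspondence bookkeeping, of the same nature as part of the proof of Theorem \ref{main}. The real difficulty is the descent in Step 2: passing from Andr\'e's \emph{motivated} Kuga--Satake cycle to an \emph{algebraic} one. This is out of reach for an arbitrary cubic fourfold, and indeed for an arbitrary K3 surface; making it work here requires genuinely exploiting both the finite-dimensionality of $\mathfrak{h}(Y)$ supplied by \cite{cubic} and the extra structure coming from the order-$3$ symmetry (the $\QQ(\zeta_3)$-action on the transcendental motive), so as to control the relevant projectors on $Y\times A_Y\times A_Y$ and, via $\Gamma$, on $S\times A\times A$.
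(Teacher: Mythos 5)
Your Step 2 has a genuine gap, and it is precisely at the point you flag as the ``real difficulty''. Kimura finite-dimensionality controls the \emph{kernel} of the cycle class map (nilpotence of homologically trivial correspondences); it gives no information about its \emph{image}, i.e.\ about which Hodge (or motivated) classes on $Y\times A_Y\times A_Y$ are algebraic. So finite-dimensionality of $h(Y)$, the $\QQ(\zeta_3)$-action, and a refined Chow--K\"unneth decomposition cannot by themselves upgrade Andr\'e's motivated Kuga--Satake cycle to an algebraic one: the obstruction is that Andr\'e's construction passes through uncontrolled auxiliary varieties (deformations of the K3-type motive back to the Kummer case), and one would need the Lefschetz standard conjecture for products involving those auxiliary varieties, not just for $Y\times A_Y\times A_Y$. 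As written, your Step 2 is a restatement of the problem rather than a proof. The ingredient you are missing is that the Kuga--Satake Hodge conjecture is already a \emph{theorem} of van Geemen--Izadi for smooth cyclic cubic fourfolds (\cite[Corollary 5.3]{vGI}), proved by an explicit ``half-twist'' construction exploiting the order-3 automorphism; no motivated-to-algebraic descent is needed. This is the actual input of the paper's proof.

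There is also a secondary issue in your Step 1: the cubic fourfold $Y$ attached to $S$ is \emph{cuspidal}, so its Fano variety $F$ is singular and the Beauville--Donagi isomorphism $H^4(Y,\QQ)\cong H^2(F,\QQ)(-1)$ is not directly available; moreover van Geemen--Izadi applies only to smooth cyclic cubics. The paper circumvents both problems by working in a one-parameter family $\YY\to B$ degenerating smooth cyclic cubics $Y_b$ to $Y_0=Y$: after an \'etale base change one has a family of Kuga--Satake abelian schemes (\cite[Proposition 6.4.10]{Huy}), the van Geemen--Izadi correspondence is checked to be \emph{generically defined} over $B$, it is transported to $\wt{F}_b\;(b\neq 0)$ via $H^4_{tr}(Y_b)\cong H^2_{tr}(F_b)\cong H^2_{tr}(\wt{F}_b)$, and then the closure of the relative correspondence specializes to the central fiber $\wt{F}_0\cong S^{[2]}$, whence to $S$. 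If you replace your Step 2 by the citation of van Geemen--Izadi and recast Step 1 as this spread-and-specialize argument over the family, you recover the paper's proof.
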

  
This relies on the relation with cyclic cubic fourfolds, combined with the fact that the Kuga--Satake Hodge conjecture is known for cyclic cubic fourfolds \cite{vGI}.

Combining Theorems \ref{main} and \ref{main2}, we obtain the verification of an old conjecture of Voisin \cite{Vo0} for these K3 surfaces:

\begin{nonumbering}[=Theorem \ref{main3}] Let $S$ be a K3 surface as in Theorem \ref{main}. For any two degree zero 0-cycles $a,b\in A^2_{hom}(S)$, there is equality
  \[  a\times b = b\times a\ \ \ \hbox{in}\ A^4(S\times S)\ .\]
  \end{nonumbering}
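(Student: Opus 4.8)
The plan is to combine Theorems \ref{main} and \ref{main2} through the transcendental motive, reducing Voisin's conjecture to a single vanishing of a Chow group of an abelian-type motive. As $S$ is a K3 surface it has a Chow--K\"unneth decomposition, and the weight $2$ part splits further as $\mathfrak{h}^2(S)=\mathfrak{h}^2_{\mathrm{alg}}(S)\oplus\mathfrak{t}(S)$ with $\mathfrak{t}(S)$ the transcendental motive, so that $H^\ast(\mathfrak{t}(S))=H^2_{\mathrm{tr}}(S)$ and $A^2(\mathfrak{t}(S))=A^2_{\mathrm{hom}}(S)$. Hence for $a,b\in A^2_{\mathrm{hom}}(S)=A^2(\mathfrak{t}(S))$ the cycle $a\times b-b\times a$ lies in $A^4(\mathfrak{t}(S)\otimes\mathfrak{t}(S))$ and, being anti-invariant under the exchange of the two factors, it lies in $A^4(\wedge^2\mathfrak{t}(S))$. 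So Theorem \ref{main3} reduces to showing $A^4(\wedge^2\mathfrak{t}(S))=0$. Two features will be used: the transcendental projector kills $H^4(S)$, so the entire group $A^4(\wedge^2\mathfrak{t}(S))$ is homologically trivial; and since $h^{2,0}(S)=1$, the Hodge structure $\wedge^2 H^2_{\mathrm{tr}}(S)$ has no $(4,0)$-part, i.e.\ it has coniveau $\ge 1$.

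Next I would bring in the other two theorems. By Theorem \ref{main2} the Kuga--Satake inclusion $H^2_{\mathrm{tr}}(S)\hookrightarrow H^2(A\times A,\QQ)$ is induced by an algebraic cycle; composing with the (algebraic) transcendental projector on $S$ and a K\"unneth projector on $A\times A$ gives a morphism of Chow motives $f\colon\mathfrak{t}(S)\to\mathfrak{h}^2(A\times A)$ which is injective in cohomology. Its transpose, twisted by the (algebraic, by Lieberman) Lefschetz isomorphism on $A\times A$ and Poincar\'e duality on $S$, yields $g\colon\mathfrak{h}^2(A\times A)\to\mathfrak{t}(S)$ with $g\circ f$ acting on cohomology as a nonzero scalar; by Theorem \ref{main}, finite-dimensionality forces $g\circ f$ to be invertible in $\operatorname{End}(\mathfrak{t}(S))$, so $f$ is a split injection and $\mathfrak{t}(S)$ is a direct summand of $\mathfrak{h}^2(A\times A)$. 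Taking second exterior powers (the exchange involution on $(A\times A)^{\times 2}$ being algebraic), $\wedge^2\mathfrak{t}(S)$ becomes a direct summand of $\mathfrak{h}^4\big((A\times A)^{\times 2}\big)$; thus $\wedge^2\mathfrak{t}(S)$ is a finite-dimensional Chow motive of abelian type.

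The last step is to deduce $A^4(\wedge^2\mathfrak{t}(S))=0$ from abelian type, finite-dimensionality, and the coniveau $\ge1$ of its cohomology --- finite-dimensionality alone does not suffice, since the group is homologically trivial. The mechanism I have in mind: coniveau $\ge1$ means that, up to a Tate twist, $\wedge^2 H^2_{\mathrm{tr}}(S)$ is an effective weight-$2$ Hodge structure of abelian type, hence (using that effective abelian-type weight-$2$ Hodge structures are realized in $H^2$ of abelian varieties, together with the algebraicity of the relevant correspondences in the abelian-type world) $\wedge^2\mathfrak{t}(S)$ is, up to Tate twist, a direct summand of $\mathfrak{h}^2(B')$ for an abelian variety $B'$; then $A^4(\wedge^2\mathfrak{t}(S))$ is a direct summand of $A^3(\mathfrak{h}^2(B'))=(\pi^{B'}_2)_\ast CH^3(B')_{\QQ}$, which vanishes by Beauville's vanishing $CH^i(B')_{(s)}=0$ for $s>i$ (here $i=3$, $s=4$). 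Equivalently, one can invoke a packaged criterion --- Voisin's conjecture holds for a surface with $p_g=1$ and $q=0$ whose transcendental motive is finite-dimensional and of abelian type --- which is available from earlier work of the second named author. I expect this final passage, from the Hodge-theoretic coniveau condition to the triviality of the codimension-$4$ Chow group (essentially a coniveau, or generalized Hodge conjecture, input that is manageable in the abelian-type setting), to be the main obstacle, as it is exactly where one must go genuinely beyond bare Kimura finite-dimensionality.
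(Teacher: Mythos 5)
Your proposal follows the paper's own route: the paper combines Theorems \ref{main} and \ref{main2} exactly as you do (this is packaged in Remark \ref{KSChow}, which realizes $h^2_{tr}(S)$ as a direct summand of $h^2(A\times A)$ using the algebraicity of the Kuga--Satake correspondence plus finite-dimensionality) and then simply invokes the criterion from the second author's earlier work (\cite[Lemma 2.1]{47}, stated as Proposition \ref{crit}), which is precisely the second of your two suggested endgames. Your hands-on alternative for the last step --- realizing $\wedge^2 H^2_{\mathrm{tr}}(S)(1)$ motivically inside $h^2(B')$ for an abelian variety $B'$ and then quoting Beauville's vanishing --- is not justified as written (an effective weight-two Hodge structure with $h^{2,0}=t-2>1$ occurring in $H^4$ of an abelian variety is not known to embed, with algebraic correspondences, into the $H^2$ of another abelian variety), but since you correctly flag this as the delicate point and defer to the packaged criterion, your argument coincides with the paper's.
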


\medskip
   
Surfaces $S$ as in Theorem \ref{main} form a 9-dimensional family.
Thanks to work of Artebani--Sarti \cite{AS}, this family constitutes a Zariski open subset of an irreducible component of the moduli space of K3 surfaces with an order 3 non-symplectic automorphism. Using this, we can deduce the following:               
                            
\begin{nonumberingc}[cf. Section \ref{final}] Let $S$ be a K3 surface with an order 3 non-symplectic automorphism $\sigma$. Assume that the fixed locus of $\sigma$ contains a curve. Then $S$ has finite-dimensional motive, in the sense of Kimura \cite{Kim}. Moreover, the Kuga--Satake Hodge conjecture and Voisin's conjecture are true for $S$.
\end{nonumberingc}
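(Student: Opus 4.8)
The plan is to deduce the statement from Theorems \ref{main}, \ref{main2} and \ref{main3} via the moduli-theoretic description of Artebani--Sarti \cite{AS}, and then to propagate the three conclusions from the Zariski-dense family of complete intersections to the remaining members of the relevant moduli component. First I would recall from \cite{AS} that a K3 surface carrying an order $3$ non-symplectic automorphism $\sigma$ whose fixed locus contains a curve determines a point in one of the two $9$-dimensional irreducible components of the moduli space of such pairs (the two being interchanged by replacing $\sigma$ with $\sigma^{2}$); fix such a component $\mathcal M$. Inside $\mathcal M$ the $(2,3)$-complete intersections $S\subset\PP^4$ of the shape of Theorem \ref{main}, equipped with the cover automorphism $x_4\mapsto\zeta_3 x_4$, sweep out a Zariski-dense open subset $U$, so that for $S$ over a point of $U$ all three conclusions --- Kimura finite-dimensionality, the Kuga--Satake Hodge conjecture, Voisin's conjecture --- hold at once, by Theorems \ref{main}, \ref{main2} and \ref{main3}. (I would check against \cite{AS} that the hypothesis ``the fixed locus contains a curve'' indeed confines us to these components; should lower-dimensional strata with a fixed curve occur, I would argue that they lie in the closure of $U$ in the relevant period domain and are therefore reached by the degeneration argument below.)

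It then remains to treat the members $S_0$ over the proper closed complement $\mathcal M\setminus U$, along which the Picard number jumps. One route is to verify that the Boissi\`ere--Heckel--Sarti picture survives over all of $\mathcal M$: every such $S_0$ would be the singular locus of the Fano variety of lines $F(X_0)$ of an explicit cuspidal cyclic cubic fourfold $X_0$, with $\mathrm{Bl}_{S_0}F(X_0)\cong S_0^{[2]}$; since cyclic cubic fourfolds have finite-dimensional motive \cite{cubic} and satisfy the Kuga--Satake Hodge conjecture \cite{vGI} uniformly in the family, the motivic transfer underlying the proofs of Theorems \ref{main} and \ref{main2} would then apply verbatim to $S_0$, and Voisin's conjecture would follow as in Theorem \ref{main3}. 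The alternative is a spreading-out argument: for a smooth connected family $\mathcal S\to B$ of such surfaces with $b_0\in B$ and $B\cap U$ dense, one spreads over a dense open of $B$ the relevant cycles --- the Chow--K\"unneth-type idempotents together with the nilpotence data $\Lambda^{n}\mathfrak h^{+}=0=\sym^{m}\mathfrak h^{-}$ witnessing finite-dimensionality, and the cycle on $S\times A\times A$ inducing the Kuga--Satake injection --- and then specializes them to $b_0$ using properness of the relevant Hilbert schemes; idempotency, nilpotence, and the cohomological identity pinning down the Kuga--Satake class are all stable under specialization of cycles, so all three conclusions are recovered at $b_0$, and hence over $\mathcal M$.

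The main obstacle is exactly this propagation to the special members. Finite-dimensionality is not known to be a closed or deformation-invariant property, and the natural motivic decompositions interact badly with the jump of the Picard number along $\mathcal M\setminus U$: the transcendental projector of a special member is strictly smaller than that of a general one, so it does not simply specialize. Making the spreading-out route rigorous therefore requires working over a sublocus of constant Picard number and separately embedding, by an idempotent correspondence, the special transcendental motive into a general one; the cleaner alternative is to extend the cuspidal-cubic-fourfold correspondence and the inputs \cite{cubic}, \cite{vGI} over the entire component, which reduces matters to controlling the degeneration of $F(X)$ and of the isomorphism $\mathrm{Bl}_SF(X)\cong S^{[2]}$ at the special parameter values.
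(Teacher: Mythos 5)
There is a genuine gap. Your argument rests on the claim that the $(2,3)$-complete intersections of Theorem \ref{main} sweep out a Zariski-dense open subset of \emph{every} component of the moduli space containing a fixed curve. By Artebani--Sarti (Theorem \ref{as}) there are two such $9$-dimensional components, the closures of $\MM_{0,1}$ and $\MM_{0,2}$, and the complete intersections account only for $\MM_{0,1}$ (they are certainly not interchanged by $\sigma\mapsto\sigma^2$, which does not change the fixed locus). For the second component no degeneration or spreading from $U$ is available, since $\MM_{0,2}$ is not in the closure of the family of complete intersections. The paper's extra input here is Kondo's result (Proposition \ref{kon}) that the general $S'\in\MM_{0,2}$ is the Jacobian elliptic fibration of some $S\in\MM_{0,1}$; this gives a twisted derived equivalence $D^b(S',\alpha)\cong D^b(S)$ and hence, by Fu--Vial, an isomorphism of Chow motives $h(S')\cong h(S)$, from which finite-dimensionality transfers; the Kuga--Satake correspondence for $S'$ is then obtained by composing this motivic isomorphism with the Kuga--Satake correspondence of $S$ and the isogeny between the Kuga--Satake varieties. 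Without this (or some substitute) your proof only covers one of the two components.

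On the propagation to special members within a component, you are over-engineering the difficulty. Since these are regular surfaces ($q=0$), finite-dimensionality of $h(S)$ is equivalent to even finite-dimensionality of the \emph{full} motive, witnessed by the vanishing of the single projector defining $\wedge^{b_2+3}h(S)$; this cycle spreads over the base and specializes by Voisin's Lemma \ref{spread}, with no interference from Picard-number jumps and no need to isolate transcendental projectors (this is exactly Corollary \ref{spreadkim}). Your alternative route --- extending the Boissi\`ere--Heckel--Sarti picture over the whole component --- is not needed and would be delicate, as their theorem requires $f$ and $g$ general. Finally, for the Kuga--Satake conjecture the correct mechanism is not specialization of the cycle on $S\times A\times A$ per se, but the observation that the van Geemen--Izadi correspondence is \emph{generically defined} over the base, so its extension to the total space restricts to the required class on every fiber (Proposition \ref{ksh01}); Voisin's conjecture then follows from the criterion of Proposition \ref{crit} once one has both finite-dimensionality and an algebraic Kuga--Satake embedding.
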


(Unfortunately, we have not been able to get rid of the assumption on the fixed locus; cf. Remark \ref{pity} below.) We observe that this corollary is not a rephrasing of the preceding result, since the sextic K3 surfaces described so far describe only (the generic) elements of one component of the moduli space. We show that the Corollary holds also for a second component, whose general element is the Jacobian elliptic fibration of an element of the first component.

\smallskip

Finally, let us mention the following remarkable consequence of Kimura finite-dimensionality:

\begin{nonumberingc} Let $S$ be as in Corollary \ref{cor}, and let $X=S^{[m]}$ be the Hilbert scheme of length $m$ 0-dimensional subschemes of $S$. Then the Beauville--Voisin conjecture is true for $X$, i.e. the $\QQ$-subalgebra
  \[ R^\ast(X):=\bigl\langle A^1(X), c_j(X)\bigr\rangle\ \ \subset\ A^\ast(X) \]
  generated by divisors and Chern classes of the tangent bundle of $X$ injects into cohomology.
  \end{nonumberingc}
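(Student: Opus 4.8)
The plan is to deduce the Beauville--Voisin property for $X = S^{[m]}$ from the finite-dimensionality of the motive of $S$ (Corollary \ref{cor}) via the general machinery relating finite-dimensional motives to the Bloch--Beilinson conjectures. The key point is that by a theorem of Vial (building on the work of de Cataldo--Migliorini on the Chow motive of Hilbert schemes of surfaces), when $S$ is a K3 surface with finite-dimensional motive, the Hilbert scheme $S^{[m]}$ admits a \emph{multiplicative} Chow--K\"unneth decomposition, i.e. a bigrading $A^\ast(X) = \bigoplus_j A^\ast(X)_{(j)}$ of the Chow ring that is compatible with the intersection product and such that $A^i(X)_{(j)}$ maps isomorphically (rationally) onto the part of cohomology ``of the expected weight''. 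In particular $A^i(X)_{(0)}$ injects into $H^{2i}(X,\QQ)$.

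First I would record that, since $S$ is a K3 surface with finite-dimensional motive, the refined (Bloch--Beilinson) filtration on $A^\ast(S)$ behaves as predicted: $A^2_{hom}(S) = A^2(S)_{(2)}$ and $A^\ast(S)_{(0)}$ is spanned by the Beauville--Voisin zero-cycle class and by intersections of divisors. This is exactly the content (for surfaces) of the Beauville--Voisin result, and it follows formally from finite-dimensionality together with the fact that $A^1_{hom}(S) = 0$ and that $S$ has a distinguished point. Next I would invoke Vial's construction to transport this to $X = S^{[m]}$: the multiplicative Chow--K\"unneth decomposition on $S^{[m]}$ is built canonically out of that of $S$, and its grading is multiplicative. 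Then I would check that the two types of generators of $R^\ast(X)$ land in the degree-zero piece $A^\ast(X)_{(0)}$: divisor classes $A^1(X)$ lie in $A^1(X)_{(0)} \oplus A^1(X)_{(2)}$, but the $(2)$-part of $A^1$ vanishes for dimension reasons (it would be a subquotient of $A^1_{hom}$ of a surface, which is trivial), so $A^1(X) = A^1(X)_{(0)}$; and the Chern classes $c_j(T_X)$ are tautological classes, which by the de Cataldo--Migliorini/Vial description of the Chow ring of $S^{[m]}$ are polynomials in classes pulled back and pushed forward from powers of $S$ via the incidence correspondences, all of which lie in the $(0)$-part. Hence $R^\ast(X) \subseteq A^\ast(X)_{(0)}$, which injects into cohomology.

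The main obstacle is the second step: verifying that \emph{all} the tautological generators $c_j(T_X)$, and more generally the whole subring $R^\ast(X)$, actually sit inside the degree-zero graded piece. For the Chern classes of the tangent bundle one needs an explicit enough handle on $T_{S^{[m]}}$ in terms of the nested Hilbert schemes / incidence varieties used to define the Chow--K\"unneth projectors, so as to see that no higher-weight components appear; this is where one must lean on the precise form of the de Cataldo--Migliorini decomposition and on Vial's verification that the tautological ring is contained in the $(0)$-part. A secondary subtlety is multiplicativity: one must know that the Chow--K\"unneth decomposition of $S^{[m]}$ coming from that of $S$ is genuinely compatible with the product, which again is a theorem one quotes rather than reproves. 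Once these inputs are in place, the statement follows: $R^\ast(X) = \langle A^1(X), c_j(X) \rangle \subseteq A^\ast(X)_{(0)} \hookrightarrow H^\ast(X,\QQ)$, so the cycle class map is injective on $R^\ast(X)$, which is the assertion of the Beauville--Voisin conjecture for $X$.
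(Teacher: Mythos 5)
The paper's proof of this corollary is a one--line citation: combine Corollary \ref{cor} with Yin's theorem (Theorem \ref{yin}), which states precisely that Kimura finite-dimensionality of $h(S)$ implies the Beauville--Voisin property for all $S^{[m]}$. Your proposal tries instead to reprove this via multiplicative Chow--K\"unneth decompositions, and it has a genuine gap at the decisive step. The existence of an MCK decomposition on $S^{[m]}$ (Vial, building on de Cataldo--Migliorini \cite{dCM} and Shen--Vial \cite{SV}) and the containment of $A^1(X)$ and of the Chern classes $c_j(X)$ in the graded piece $A^\ast_{(0)}(X)$ are indeed known, and require no finite-dimensionality hypothesis at all. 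What is \emph{not} true is your assertion that an MCK decomposition comes equipped with the property that $A^i_{(0)}(X)$ injects into $H^{2i}(X,\QQ)$: this is not part of the definition of an MCK decomposition, and it is not known to follow from finite-dimensionality of $h(S)$. For instance, already for $m=2$ the group $A^2_{(0)}(S^{[2]})$ contains the image of $A^2(S\times S)$ under the symmetrization of $\pi^2_{tr}\otimes\pi^2_{tr}$, whose homologically trivial part is essentially (the symmetric part of) $\operatorname{End}(h^2_{tr}(S))_{hom}$; finite-dimensionality makes this a nilpotent ideal but does not make it vanish, and its vanishing is an open problem (a form of Beauville's splitting-property conjecture). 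So your argument establishes $R^\ast(X)\subseteq A^\ast_{(0)}(X)$ and then appeals to an unestablished injectivity statement for the much larger group $A^\ast_{(0)}(X)$.

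The missing content is exactly Yin's theorem: one must prove injectivity of the cycle class map on the \emph{tautological} subring only, not on all of $A^\ast_{(0)}(X)$. Yin does this by transporting $R^\ast(S^{[m]})$ into the tautological ring of the powers $S^n$ (generated by pullbacks of divisors, of the Beauville--Voisin class, and of the transcendental part of the diagonal), writing down the cohomological relations, and showing that the one non-formal relation needed in the Chow ring is the Kimura relation $\wedge^{b+1}h^2_{tr}(S)=0$ with $b=\dim H^2_{tr}(S,\QQ)$ --- which holds precisely because $h(S)$ is finite-dimensional. Either quote Theorem \ref{yin} as the paper does, or supply this analysis of the tautological ring; the MCK formalism alone does not close the argument.
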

  
  This follows from work of Yin \cite{Yin}, cf. Theorem \ref{yin} below.
   
 \vskip0.6cm

\begin{convention} In this article, the word {\sl variety\/} will refer to a reduced irreducible scheme of finite type over $\C$. A {\sl subvariety\/} is a (possibly reducible) reduced subscheme which is equidimensional. 

{\bf All Chow groups will be with rational coefficients}: we will denote by $A_j(Y)$ the Chow group of $j$-dimensional cycles on $Y$ with $\QQ$-coefficients; for $Y$ smooth of dimension $n$ the notations $A_j(Y)$ and $A^{n-j}(Y)$ are used interchangeably. 
The notation $A^j_{hom}(Y)$ will be used to indicate the subgroup of homologically trivial cycles.
For a morphism $f\colon X\to Y$, we will write $\Gamma_f\in A_\ast(X\times Y)$ for the graph of $f$.

The contravariant category of Chow motives (i.e., pure motives with respect to rational equivalence as in \cite{Sc}, \cite{MNP}) will be denoted 
$\MM_{\rm rat}$.
\end{convention}

\section{Preliminaries}

\subsection{Finite-dimensionality}

There are plenty of good references \cite{Kim}, \cite{J4},  \cite{MNP} for the definition of finite–dimensional motive.
A crucial property of varieties that have finite-dimensional motive is certainly the nilpotence theorem.

\begin{theorem}(Kimura \cite{Kim})
Let $X$ be a smooth projective variety of dimension $n$ with finite dimensional motive. Let $\Gamma \in A^n(X\times X)$ be a numerically trivial correspondence. Then there exists $m\in \mathbb{N}$ such that

$$\Gamma^{\circ m}=0 \ \ \in \ A^n(X\times X).$$

\end{theorem}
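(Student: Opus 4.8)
The plan is to give the standard formal argument (due to Kimura \cite{Kim}; see also \cite{J4}, \cite{MNP}) inside the rigid $\QQ$-linear tensor category $\MM_{\rm rat}$. First I would recall that $A^n(X\times X)=\mathrm{End}_{\MM_{\rm rat}}(M(X))$, with composition of correspondences matching composition of endomorphisms, and that a class $\gamma\in A^n(X\times X)$ is numerically trivial precisely when $\deg(\gamma\cdot\beta)=0$ for every $\beta\in A^n(X\times X)$. The numerically trivial classes form a two-sided ideal $I\subset\mathrm{End}(M(X))$ (numerical equivalence is stable under composition of correspondences), and $\Gamma\in I$; hence it suffices to prove that $I$ is a \emph{nilpotent} ideal, so that $\Gamma^{\circ m}=0$ as soon as $m$ exceeds the index of nilpotency of $I$.

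Next I would isolate the one non-formal ingredient, a trace computation. Since $\MM_{\rm rat}$ is rigid, every $f\in\mathrm{End}(M)$ has a categorical trace $\mathrm{tr}(f)\in\mathrm{End}(\one)=\QQ$, and for $M=M(X)$ with $f$ the class of $\gamma$ the trace equals the intersection number $\deg(\gamma\cdot\Delta_X)$, which depends only on the numerical class of $\gamma$. Consequently $\mathrm{tr}(f)=0$ for every $f\in I$, and, since $I$ is an ideal, $\mathrm{tr}(f^{\circ k})=0$ for all $k\ge 1$ whenever $f\in I$.

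Then I would invoke finite-dimensionality, writing $M(X)=M_+\oplus M_-$ with $\wedge^{a}M_+=0$ and $\sym^{b}M_-=0$ for suitable $a,b$. For an evenly finite-dimensional summand $N$ with $\wedge^{d+1}N=0$, rigidity supplies a Cayley--Hamilton identity $P_f(f)=0$ in $\mathrm{End}(N)$, where $P_f(t)=\sum_{i=0}^{d}(-1)^i\,\mathrm{tr}(\wedge^i f)\,t^{\,d-i}$, for every $f\in\mathrm{End}(N)$; by Newton's identities, which hold formally in any $\QQ$-linear rigid tensor category, each coefficient $\mathrm{tr}(\wedge^i f)$ with $i\ge 1$ is a polynomial in $\mathrm{tr}(f),\dots,\mathrm{tr}(f^{\circ i})$. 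For $f\in I$ these all vanish by the previous step, so $P_f(t)=t^{d}$ and $f^{\circ d}=0$; the odd summand $M_-$ is treated identically with $\sym^i$ in place of $\wedge^i$ and the signs of a super structure. Applying this to the four components $e_\pm f e_\pm$ of an element of $I$ (relative to the projectors $e_\pm$ onto $M_\pm$) and to the products of its off-diagonal components, all of which again lie in $I$, and organising the combinatorics via the multilinear form of $\wedge^{a}M_+=0=\sym^{b}M_-$, I would conclude that $I^{N}=0$ for some $N=N(a,b)$, which proves the theorem.

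I expect the main obstacle to be twofold. The genuinely substantive input is the identity $\mathrm{tr}(f)=\deg(\gamma\cdot\Delta_X)$ and its corollary that a numerically trivial correspondence has vanishing categorical trace; everything else is diagram-chasing in a rigid tensor category. On the formal side, the delicate step is upgrading ``each element of $I$ is nilpotent'' to ``$I$ is a nilpotent ideal'': this is where the off-diagonal, mixed even/odd terms must be controlled through the multilinear Cayley--Hamilton relation, and it is precisely here that the uniform bound $m$ in the statement originates.
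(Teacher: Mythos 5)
The paper does not actually prove this statement: it is quoted as a black box from Kimura's article \cite{Kim} (see also \cite{J4}, \cite{MNP}), so there is no in-text argument to compare yours against. What you have written is, in outline, the standard proof from that literature, and the skeleton is sound: numerically trivial correspondences form a two-sided ideal $I$ of $\mathrm{End}_{\MM_{\rm rat}}(h(X))$ because numerical equivalence is an adequate relation; the categorical trace of a correspondence is the intersection number against the diagonal, so every element of $I$ (hence every power of $\Gamma$, and every $\wedge^i$ or $\sym^i$ of it via Newton's identities over $\QQ$) has trace zero; and the Cayley--Hamilton identity forced by $\wedge^{a}M_+=0$ (resp. $\sym^{b}M_-=0$) then kills a fixed power of any element of $I$ living on a purely even (resp. purely odd) summand. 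You are also right that the genuinely non-formal input is the identity $\mathrm{tr}(f)=\deg(\gamma\cdot\Delta_X)$.

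The one place where your sketch is thinner than it should be is exactly the step you flag. Note first that even for a \emph{single} $\Gamma$ the mixed blocks cannot be avoided: $f_{++}$, $f_{--}$, $f_{+-}$, $f_{-+}$ do not commute, so elementwise nilpotency on $M_\pm$ does not directly bound $\Gamma^{\circ m}$; you really do need that the numerically trivial ideal of an evenly (or oddly) finite-dimensional motive is a \emph{nilpotent ideal}, not merely a nil one. The standard way to get this is to multilinearize Cayley--Hamilton: the vanishing of the (anti)symmetrizer on $N^{\otimes a}$ composed with $f_1\otimes\cdots\otimes f_a$ and partially traced shows that the fully symmetrized product of any $a$ elements of $I\cap\mathrm{End}(N)$ vanishes, and then Nagata--Higman (valid over $\QQ$) converts this bounded-index nil condition into $\bigl(I\cap\mathrm{End}(N)\bigr)^{\circ N_0}=0$. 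Once that is in hand, the block-matrix combinatorics is easy: any word of length $m$ in the four blocks of elements of $I$ returns at least $m/2-1$ times to $M_+$ or to $M_-$, hence factors through a long product of elements of the (now genuinely nilpotent) ideal on that summand, and vanishes for $m$ large. With that lemma made explicit your argument is complete and is, for all practical purposes, Kimura's own.
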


More precisely, the nilpotence property for all the powers of $X$ could even be used as an alternative definition of finite-dimensionality for a motive (see e.g. Jannsen \cite[Corollary 3.9]{J4}). Kimura has conjectured that all projective variety has finite dimensional motive \cite{Kim}, but this is of course far from being proved. Nevertheless there exists a bunch of interesting examples.

\begin{remark}\label{examples}
The following varieties are known to have a finite-dimensional motive:

\begin{enumerate}

\item varieties that are dominated by products of curves \cite{Kim}, and varieties of dimension $\leq 3$ rationally dominated by products of curves \cite[Example 3.15]{V3} (in particular, the K3 surfaces studied in \cite{Par0} and the K3 surfaces in \cite{ILP});

\item K3 surfaces with Picard number 19 or 20 \cite{Ped};

\item K3 surfaces obtained as complete intersections of 3 diagonal quadrics in $\PP^5$ \cite{18};
 
\item surfaces not of general type with vanishing geometric genus \cite[Theorem 2.11]{GP};

\item many examples of surfaces of general type with $p_g=0$ \cite{PW};

\item Hilbert schemes of surfaces known to have finite-dimensional motive \cite{dCM}; 

\item Fano varieties of lines of smooth cubic threefolds, and Fano varieties of lines of smooth cubic fivefolds \cite{22}; 

\item generalized Kummer varieties \cite[Remark 2.9]{Zu};

 \item 3-folds with nef tangent bundle \cite[Example 3.16]{V3}, and certain 3-folds of general type \cite[Section 8]{Vial34fold}; 

\item varieties $X$ with Abel-Jacobi trivial Chow groups (i.e. $A^k_{AJ}(X)=0$ for all $k$) \cite[Theorem 4]{43};

\item products of varieties with finite-dimensional motive \cite{Kim}.

\end{enumerate} 
 
\end{remark}

\begin{remark}
It is worth pointing out that so far, all the examples of finite-dimensional Chow motives happen to be of abelian type. This is the tensor subcategory generated by Chow motives of curves. On the other hand, many very important examples do not lie inside this subcategory, e.g. the motive of a general hypersurface in $\PP^3$ \cite[Section 7]{Del}.
\end{remark}

For K3 surfaces, finite-dimensionality has a remarkable consequence:

\begin{theorem}[Yin \cite{Yin}]\label{yin} Let $S$ be a K3 surface, and assume $S$ has finite-dimensional motive. Then the Beauville--Voisin conjecture is true for the Hilbert schemes $X:=S^{[m]}$ for all $m\in\NN$, i.e. the $\QQ$-subalgebra
  \[ R^\ast(X):=\bigl\langle A^1(X), c_j(X)\bigr\rangle\ \ \subset\ A^\ast(X) \]
  generated by divisors and Chern classes of the tangent bundle of $X$ injects into cohomology.
\end{theorem}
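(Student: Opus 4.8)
The plan is to follow the two-step strategy underlying Yin's paper \cite{Yin}. The first step is of a motivic--geometric nature: it reduces the Beauville--Voisin conjecture for $X=S^{[m]}$ to a statement about the powers $S^k$. By the computation of de Cataldo--Migliorini \cite{dCM}, the Chow motive of $S^{[m]}$ decomposes in $\MM_{\rm rat}$ as a direct sum, indexed by partitions of $m$, of Tate twists of the motives $\mathfrak{h}(S^{k})$ with $k\leq m$, the symmetric group $\Sy_k$ acting by permutation of the factors. I would combine this with the description of $A^1(S^{[m]})=\Pic(S^{[m]})\otimes\QQ$ --- generated by the pullback of $\Pic(S)$ and (half of) the exceptional divisor, by Fogarty and Beauville --- and with Lehn's formula and its refinements, which express the Chern classes $c_j(T_{S^{[m]}})$ as universal polynomials in the Chern classes of the tautological sheaves $\OO_S^{[m]}$ and $L^{[m]}$, $L\in\Pic(S)$. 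Because for a K3 surface one has $c_1(S)=0$ and $c_2(S)=24\,\oo_S$, with $\oo_S\in A^2(S)$ the Beauville--Voisin point class, all of these classes descend, under the de Cataldo--Migliorini isomorphism and compatibly with the cycle class maps, into the subrings
\[ R^\ast(S^{k})\ :=\ \bigl\langle\, p_i^\ast\oo_S,\ p_{ij}^\ast\Delta_S\ \bigr\rangle\ \subset\ A^\ast(S^k)\ .\]
Hence it suffices to prove that the cycle class map $R^\ast(S^k)\to H^\ast(S^k,\QQ)$ is injective for all $k$.

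The second step, where finite-dimensionality enters, proves this injectivity. I would start from the refined Chow--K\"unneth decomposition $\mathfrak{h}(S)=\one\oplus\mathfrak{h}^2(S)\oplus\LLL^{2}$ together with the splitting $\mathfrak{h}^2(S)=\LLL^{\oplus\rho}\oplus\mathfrak{t}(S)$, $\rho=\rank\Pic(S)$, of $\mathfrak{h}^2(S)$ into its algebraic part and the transcendental motive $\mathfrak{t}(S)$, whose Betti realization is $H^2_{\mathrm{tr}}(S,\QQ)$. The hypothesis that $S$ has finite-dimensional motive forces $\mathfrak{t}(S)$ to be finite-dimensional, and since its realization lies in the single \emph{even} degree $2$, it is in fact evenly finite-dimensional; hence $\wedge^{b+1}\mathfrak{t}(S)=0$ in $\MM_{\rm rat}$, where $b:=\dim_\QQ H^2_{\mathrm{tr}}(S,\QQ)$, and more generally $\mathbb{S}_\lambda\,\mathfrak{t}(S)=0$ for every partition $\lambda$ with $\ell(\lambda)>b$. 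Now $\mathfrak{h}(S^k)=\mathfrak{h}(S)^{\otimes k}$, and a bookkeeping of K\"unneth components shows that the cohomological relations among the generators $p_i^\ast\oo_S$ and $p_{ij}^\ast\Delta_S$ are generated by two families: (i) the Beauville--Voisin relations --- $\oo_S^2=0$, $D\cdot\oo_S=0$ and $D\cdot D'\in\QQ\,\oo_S$ for $D,D'\in A^1(S)$, and the relation in $A^\ast(S^3)$ expressing the small diagonal $\Delta_{123}$ in terms of the $\Delta_{ij}$ and the $p_k^\ast\oo_S$ --- all of which are already known to hold at the level of Chow groups; and (ii) the relations obtained by transporting the vanishing $\mathbb{S}_\lambda H^2_{\mathrm{tr}}(S,\QQ)=0$ (for $\ell(\lambda)>b$) through the K\"unneth decomposition of $[\Delta_S]$, which by the previous paragraph already hold motivically, hence in $A^\ast(S^k)$. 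Since every cohomological relation among the tautological generators is thereby already a Chow-theoretic relation, the cycle class map on $R^\ast(S^k)$ is injective, and the theorem follows.

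The crux of the argument is the passage in the second step from the linear-algebraic vanishing $\wedge^{b+1}\mathfrak{t}(S)=0$ --- the one place where Kimura finite-dimensionality is genuinely used --- to the assertion that this vanishing, together with the (unconditional) Beauville--Voisin relations, \emph{generates} the full ideal of cohomological relations among the tautological classes on every power $S^k$; making this precise requires a careful analysis of $R^\ast(S^k)$ in terms of the representation theory of $\Sy_k$ and of the Schur functors $\mathbb{S}_\lambda$. A secondary technical point lies in the first step: one must verify that the Chern classes of $T_{S^{[m]}}$, known a priori only through Lehn-type formulas in tautological sheaves, genuinely descend into the subrings $R^\ast(S^k)$ --- and it is precisely the K3 identity $c_2(S)=24\,\oo_S$ that makes this work.
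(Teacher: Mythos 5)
The paper does not prove this statement at all: its ``proof'' is the single line ``This is \cite[Corollary]{Yin}'', so the only thing to compare your proposal against is Yin's own argument. Measured against that, your outline is a faithful reconstruction of the architecture: the reduction from $S^{[m]}$ to the tautological subrings of the powers $S^k$ (which goes back to Voisin's paper on the Chow ring of certain hyper-K\"ahler manifolds, via de Cataldo--Migliorini and Lehn's formula, and where the K3 identities $c_1(S)=0$, $c_2(S)=24\,\oo_S$ are indeed what make the Chern classes land in the tautological ring), followed by the use of Kimura finite-dimensionality through the even finite-dimensionality of the transcendental motive $\mathfrak{t}(S)$ and the vanishing $\wedge^{b+1}\mathfrak{t}(S)=0$. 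Your identification of where finite-dimensionality enters is exactly right, as is the deduction that $\mathfrak{t}(S)$ is \emph{evenly} finite-dimensional because its realization sits in even degree.

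However, as a proof your proposal has a genuine gap precisely at the step you yourself flag as the crux: the assertion that the Beauville--Voisin relations together with the Schur-functor vanishings $\mathbb{S}_\lambda\,\mathfrak{t}(S)=0$ ($\ell(\lambda)>b$) \emph{generate the entire ideal of cohomological relations} in $R^\ast(S^k)$ is not a ``bookkeeping of K\"unneth components'' --- it is the main theorem of \cite{Yin}, and its proof is a nontrivial combinatorial/representation-theoretic analysis (Yin shows unconditional injectivity for $k\le 2b+1$ and reduces the general case to a single explicit relation implied by $\wedge^{b+1}\mathfrak{t}(S)=0$). Writing ``making this precise requires a careful analysis'' names the difficulty without resolving it, so the argument is a correct roadmap rather than a proof. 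Two minor points: your displayed definition of $R^\ast(S^k)$ omits the divisor pullbacks $p_i^\ast D$, $D\in\Pic(S)\otimes\QQ$, which must be among the generators (you clearly intend them, since you use them in the relations $D\cdot\oo_S=0$); and the first reduction step should be credited to Voisin rather than to Yin's paper itself.
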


\begin{proof} This is \cite[Corollary]{Yin}.
\end{proof}

\subsection{K3 surfaces with an order 3 non-symplectic automorphism}

\begin{notation} As in \cite[Section 5]{AS}, let $\MM_{n,k}$ denote the moduli space of K3 surfaces $S$ with an order 3 non-symplectic automorphism $\sigma$ such that the fixed locus of $\sigma$ consists of $n$ points and $k$ irreducible curves.
\end{notation}

\begin{theorem}(Artebani--Sarti \cite{AS})\label{as} The moduli space of K3 surfaces with an order 3 non-symplectic automorphism has 3 irreducible components (of dimension 9, 9 and 6), which are the closures of
  \[ \MM_{0,1}\ ,\ \MM_{0,2}\ ,\ \MM_{3,0}\ .\]
\end{theorem}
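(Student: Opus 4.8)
The theorem is quoted verbatim from \cite{AS}, so strictly speaking one may simply cite that paper; for completeness, here is the strategy one would follow. The plan is to combine the equivariant global Torelli theorem for K3 surfaces with Nikulin's theory of lattices carrying a finite-order isometry. Given an order $3$ non-symplectic automorphism $\sigma$ of a K3 surface $S$, one analyses its action on $\Lambda:=H^2(S,\mathbb{Z})\cong U^{3}\oplus E_8(-1)^{2}$. Non-symplecticity forces $\sigma^{\ast}$ to multiply a holomorphic $2$-form by a primitive cube root of unity $\zeta_3$, hence to act on the transcendental lattice with eigenvalues $\zeta_3,\bar\zeta_3$ only. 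Consequently the invariant lattice $\Lambda^{\sigma}$ is hyperbolic of some even rank $r$ (it contains a $\sigma$-invariant ample class, so its signature is $(1,r-1)$) and $3$-elementary — its discriminant group is $(\mathbb{Z}/3)^{\oplus a}$ — while the coinvariant lattice $\Lambda_\sigma:=(\Lambda^{\sigma})^{\perp}$ carries a fixed-point-free order $3$ isometry, i.e. a free $\mathbb{Z}[\zeta_3]$-module structure.

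The first step is to list the admissible pairs $(\Lambda^{\sigma},\Lambda_\sigma)$. Nikulin's existence and uniqueness results for indefinite $p$-elementary lattices (here $p=3$), combined with the requirements that $\Lambda^{\sigma}$ and $\Lambda_\sigma$ glue to $\Lambda$ and that $\Lambda_\sigma$ admit the stated isometry, reduce the possibilities to an explicit finite list, indexed essentially by $(r,a)$. The second step is to recover the fixed locus from this discrete datum: $S^{\sigma}$ is a disjoint union of $n$ isolated points and $k$ smooth curves, and the holomorphic Lefschetz fixed point formulae applied to $\sigma$ acting on $\mathcal{O}_S$ and on $\Omega^1_S$, together with the topological Lefschetz formula expressing $\chi(S^{\sigma})$ as a linear function of $r$, determine $n$, $k$ and the total genus of the fixed curves in terms of $(r,a)$. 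Carrying this out for every entry of the list shows, in particular, that the two strata with rank-$2$ invariant lattice — there are exactly two non-isometric admissible such lattices — are $\MM_{0,1}$ and $\MM_{0,2}$ (with a single fixed curve of genus $4$, resp. two fixed curves of total genus $5$), while the stratum with the rank-$8$ invariant lattice is $\MM_{3,0}$ (three isolated fixed points, no fixed curve).

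Finally, for the moduli and counting statement: fix an admissible pair of invariant rank $r$. A marked $\sigma$-K3 surface has period point in the $\zeta_3$-eigenspace of $\Lambda_\sigma\otimes\mathbb{C}$, on which the Hermitian form has signature $(1,10-\tfrac r2)$, so the relevant period domain is a complex ball $\mathbb{B}$ of dimension $10-\tfrac r2$; by the equivariant Torelli theorem and the Baily--Borel construction the corresponding $\MM_{n,k}$ is an arithmetic quotient of $\mathbb{B}$, hence irreducible of dimension $10-\tfrac r2$. Thus $r=2$ yields two $9$-dimensional families and $r=8$ a $6$-dimensional one. Since the full moduli space is the union of the finitely many nonempty strata $\MM_{n,k}$, each irreducible, its irreducible components are precisely the maximal elements of this finite family under inclusion of closures; the closure relations, read off from the list produced in the first two steps, have exactly three maximal elements, namely $\overline{\MM_{0,1}}$, $\overline{\MM_{0,2}}$ and $\overline{\MM_{3,0}}$ (in particular $\MM_{3,0}$, whose fixed locus is purely $0$-dimensional, cannot lie in the closure of either $9$-dimensional stratum). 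The genuinely delicate part is the coupling of the first two steps — the $3$-elementary lattice bookkeeping and the fixed-point-formula computation pinning down $(n,k,\text{genera})$ for every admissible lattice; once that finite table is in hand, the ball-quotient description makes the irreducibility, the dimensions, and the component count essentially formal.
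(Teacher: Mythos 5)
The paper offers no proof of this statement beyond the citation ``This is \cite[Theorem 5.6]{AS}'', so there is nothing to compare line by line; your proposal correctly notes that citing \cite{AS} suffices. Your reconstruction of the underlying argument (Nikulin's classification of hyperbolic $3$-elementary invariant lattices, the $\mathbb{Z}[\zeta_3]$-structure on the coinvariant lattice, the holomorphic and topological Lefschetz formulae pinning down the fixed-locus data $(n,k)$, and the ball-quotient description of each stratum giving irreducibility and dimension $10-\tfrac r2$) is an accurate summary of the strategy actually carried out in Artebani--Sarti, and your consistency checks (e.g.\ $r=2$ giving dimension $9$ and $\chi(S^{\sigma})=-6$, $r=8$ giving dimension $6$ and $\chi(S^{\sigma})=3$) are correct.
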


\begin{proof} This is \cite[Theorem 5.6]{AS}.
\end{proof}

One has explicit descriptions of the various $\MM_{n,k}$; in particular, $\MM_{0,1}$ consists of the surfaces that are of interest to us:

\begin{proposition}(Artebani--Sarti \cite{AS}) Any surface $S\in\MM_{0,1}$ is isomorphic to a complete intersection in $\PP^4$ of the form
  \[ \begin{cases}  &f(x_0,\ldots,x_3)=0\\
                            &g(x_0,\ldots,x_3)+ x_4^3=0\ ,\\
                            \end{cases}\]
                       where $f$ and $g$ are homogeneous polynomials of degree 2 resp. 3.
                       
    Conversely, the generic complete intersection of this form is in $\MM_{0,1}$.                   
\end{proposition}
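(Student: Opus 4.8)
The plan is to treat the two halves of the statement separately, starting with the easier converse.

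\smallskip

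For the converse, I would fix generic $f,g$ of degrees $2$ and $3$, set $S=S_{f,g}:=\{f=0,\ g+x_4^3=0\}\subset\PP^4$, and verify in turn: (i) by Bertini $S$ is smooth, and as a complete intersection of type $(2,3)$ in $\PP^4$ it has $\omega_S=\OO_S$ and $h^1(\OO_S)=0$, hence is a K3 surface; (ii) for $\zeta$ a primitive cube root of unity the linear map $\sigma\colon[x_0:\cdots:x_4]\mapsto[x_0:\cdots:x_3:\zeta x_4]$ preserves both equations (since $\zeta^3=1$), so it restricts to an order-$3$ automorphism of $S$; (iii) writing the holomorphic $2$-form as the Poincar\'e residue of $\frac{\Omega}{f\,(g+x_4^3)}$ with $\Omega=\sum_i(-1)^ix_i\,dx_0\wedge\cdots\wedge\widehat{dx_i}\wedge\cdots\wedge dx_4$, one gets $\sigma^\ast\Omega=\zeta\,\Omega$ while $f$ and $g+x_4^3$ are $\sigma$-invariant, so $\sigma$ acts on $H^{2,0}(S)$ by $\zeta$ and is non-symplectic; (iv) $\mathrm{Fix}(\sigma)$ in $\PP^4$ is $\{x_4=0\}\sqcup\{[0:\cdots:0:1]\}$, the isolated point does not lie on $S$ (there $g+x_4^3=1\neq0$), and $\{x_4=0\}\cap S=\{f=0,\ g=0\}\subset\PP^3$ is, for generic $f,g$, a smooth irreducible curve of genus $4$ (adjunction: $2p_a-2=(-4+2+3)\cdot6$). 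Thus the fixed locus is a single curve and no point, i.e. $(S,\sigma)\in\MM_{0,1}$.

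\smallskip

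For the structural statement I would take $(S,\sigma)\in\MM_{0,1}$ and pass to the quotient. As $\mathrm{Fix}(\sigma)$ is a smooth irreducible curve $C$ with no isolated points, the differential of $\sigma$ is a pseudoreflection at each fixed point, so $Y:=S/\langle\sigma\rangle$ is smooth and $\pi\colon S\to Y$ is a cyclic triple cover totally branched along $B:=\pi(C)$, with $\pi_\ast\OO_S=\OO_Y\oplus L^{-1}\oplus L^{-2}$ and $B\in|L^{\otimes3}|$ for a line bundle $L$ on $Y$. From $\omega_S=\pi^\ast(\omega_Y\otimes L^{\otimes2})$ and $\omega_S=\OO_S$ I get $\omega_Y=L^{\otimes(-2)}$, and then Riemann--Roch for $\pi_\ast\OO_S$ (using $\chi(\OO_S)=2$) together with Noether's formula force $Y$ to be a rational surface with $K_Y^{\,2}=8$, $L^2=2$, $g(B)=4$; moreover $Y$ is minimal because $-K_Y=2L$ is divisible, so $Y$ carries no $(-1)$-curve. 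This identifies $Y$ with $\PP^1\times\PP^1$ and $L$ with $\OO(1,1)$; then $|L|$ embeds $Q:=\PP^1\times\PP^1$ as a smooth quadric $\{f=0\}\subset\PP^3$, the surjection $H^0(\PP^3,\OO(3))\twoheadrightarrow H^0(Q,L^{\otimes3})$ exhibits $B$ as $\{f=0,\ g=0\}$ for a cubic $g$, and the cyclic cover is recovered inside $\PP^4$ as $\{f=0,\ x_4^3+g=0\}$ with $\sigma$ the deck transformation $x_4\mapsto\zeta x_4$; replacing $g$ by $-g$ gives the stated shape.

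\smallskip

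The step I expect to be the real obstacle is the identification of $Y$ together with the polarization $L$: the numerical data do not by themselves exclude the quadric degenerating to a cone (equivalently $Y\cong\mathbb{F}_2$), which would only yield a \emph{singular} complete intersection with $f$ of rank $3$; sorting this out uniformly over all of $\MM_{0,1}$ --- not merely over a dense open subset --- is exactly where one must invoke the lattice-theoretic classification underlying Theorem \ref{as}: one computes the invariant lattice $S(\sigma)=\pi^\ast H^2(Y,\mathbb{Z})$ of this component and then applies the Torelli theorem for K3 surfaces with an automorphism to pin down $Y$ and the degree-$2$ polarization. The remaining ingredients --- the surjectivity $H^0(\PP^3,\OO(3))\to H^0(Q,\OO(3,3))$, the precise matching of the abstract cyclic cover with the displayed equations, and the Bertini and adjunction facts in the converse --- are routine and I would dispatch them quickly.
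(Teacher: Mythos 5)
The paper does not actually prove this statement: it is quoted from Artebani--Sarti and the ``proof'' is a one-line citation to \cite[Proposition 4.7]{AS}. So your write-up is necessarily a different route, namely a genuine geometric argument. Your converse direction is correct and complete: Bertini, the residue computation showing $\sigma^\ast$ acts by $\zeta$ on $H^{2,0}(S)$, and the identification of the fixed locus with the genus-$4$ curve $\{f=g=x_4=0\}$ are all fine.

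The forward direction, however, has a genuine gap at exactly the spot you flag, and the repair you propose will not close it. Your numerics correctly force $Y=S/\langle\sigma\rangle$ to be a minimal rational surface with $K_Y^2=8$ and $-K_Y=2L$, hence $Y\cong\PP^1\times\PP^1$ or $Y\cong\mathbb{F}_2$ (odd $n$ is excluded by $2\mid K_Y$, and $n\ge 4$ because $B\cdot C_0<0$ would force the irreducible genus-$4$ curve $B$ to contain the section $C_0$). But the lattice-theoretic classification cannot eliminate $\mathbb{F}_2$: in that case $\pi^\ast H^2(Y,\mathbb{Z})$, in the basis $\pi^\ast(C_0+f),\ \pi^\ast f$, is again $U(3)$, the same invariant lattice as for $\PP^1\times\PP^1$, so the Torelli-type argument sees no difference. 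Worse, the $\mathbb{F}_2$ case genuinely occurs inside $\MM_{0,1}$: take $B\in|3C_0+6f|$ smooth, irreducible and disjoint from the $(-2)$-section (possible since $(3C_0+6f)\cdot C_0=0$); the associated triple cover is a K3 surface whose order-$3$ non-symplectic automorphism fixes exactly one genus-$4$ curve and no points, but it contains three disjoint $(-2)$-curves permuted by $\sigma$, which the linear system $|\pi^\ast L|$ contracts, so its image in $\PP^4$ is a \emph{singular} $(2,3)$ complete intersection (with $f$ of rank $3$) and $S$ is only its minimal resolution. These surfaces form a divisor in $\MM_{0,1}$, corresponding to genus-$4$ curves lying on a quadric cone. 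The statement must therefore be read for the \emph{generic} member of $\MM_{0,1}$ --- which is how \cite[Proposition 4.7]{AS} is meant, and is all this paper ever uses, since Theorem \ref{main} concerns smooth complete intersections and Corollary \ref{cor} reaches the rest of $\overline{\MM_{0,1}}$ via the spread argument. If you want the conclusion for literally every $S\in\MM_{0,1}$, an additional argument is needed on the cone divisor, and the lattice/Torelli step you sketch cannot supply it.
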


\begin{proof} This is part of \cite[Proposition 4.7]{AS}.
\end{proof}

\begin{remark} The K3 surfaces in $\MM_{0,1}$ admit an alternative decription as triple cover of a quadric branched along a genus 4 curve \cite[Remark 4.8]{AS}. Conversely, any triple cover of a quadric branched along a (smooth) genus 4 curve is in $\MM_{0,1}$ \cite[Theorem 1]{Kon}. Very recently, a new description of the component $\MM_{3,0}$ has been given in \cite{MT}.
\end{remark}

There is a nice relation between the moduli spaces $\MM_{0,1}$ and $\MM_{0,2}$:

\begin{proposition}(Kondo \cite{Kon})\label{kon} Any surface in $\MM_{0,1}$ admits an elliptic fibration. The general surface $S\in\MM_{0,2}$ is the Jacobian elliptic fibration associated to some $S^\prime\in\MM_{0,1}$.
\end{proposition}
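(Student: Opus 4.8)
The plan is to make both assertions geometric, using the description of $\MM_{0,1}$ as triple covers of a quadric recalled above. Let $S\in\MM_{0,1}$ carry its order $3$ non-symplectic automorphism $\sigma$, realized as a cyclic triple cover $\pi\colon S\to Q$ of a quadric, with $\sigma$ the deck transformation, branched along a smooth curve $C$ of genus $4$; for $Q\cong\PP^1\times\PP^1$ the canonical bundle formula forces $C\in|\OO_Q(3,3)|$. Composing $\pi$ with either ruling projection $\mathrm{pr}_i\colon Q\to\PP^1$ gives a morphism $f_i\colon S\to\PP^1$ whose general fibre is the cyclic triple cover of $\PP^1$ totally ramified over the three points $C\cap\mathrm{pr}_i^{-1}(t)$; Riemann--Hurwitz gives $2g-2=3(-2)+3\cdot 2=0$, so the general fibre has genus $1$ and $f_i$ is an elliptic fibration along whose fibres $\sigma$ acts. (For the few members with $Q$ singular one argues instead that $\Pic(S)$ contains the generic invariant lattice $\langle H_1,H_2\rangle$, where $H_1:=\pi^\ast\OO_Q(1,0)$ and $H_2:=\pi^\ast\OO_Q(0,1)$, with $H_1^2=H_2^2=0$ and $H_1\cdot H_2=3$; this lattice has isotropic classes, and a nef one defines an elliptic fibration.) This settles the first assertion; moreover, for generic $S$ one has $\Pic(S)=\langle H_1,H_2\rangle$, so every divisor meets a fibre of $f_i$ in a number of points divisible by $3$: in particular $f_i$ has no section and, the fibre class being primitive, no multiple fibre.

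For the second assertion, fix a general $S'\in\MM_{0,1}$ and one such elliptic fibration $f\colon S'\to\PP^1$, and let $J:=J(S'/\PP^1)\to\PP^1$ be its relative Jacobian: a relatively minimal elliptic surface with zero section $O$. Since $\chi(\OO_J)=\chi(\OO_{S'})=2$ and $K_J$ is trivial, $J$ is again a K3 surface, and functoriality of the relative Jacobian makes $\sigma$ induce an automorphism $\bar\sigma$ of $J$ over $\PP^1$. Over a general $t$ one has $J_t=\Pic^0(E_t)\cong E_t$; since $\sigma|_{E_t}$ has three fixed points it is of the form $z\mapsto\omega z+c$ for a primitive cube root of unity $\omega$, so the induced automorphism of $\Pic^0(E_t)$ is multiplication by $\omega$. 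Hence $\bar\sigma$ fixes the section $O$ — so $\bar\sigma$ is non-symplectic of order $3$ — together with, in each fibre, the two nonzero points of $\ker(1-\omega)\subset E_t[3]$, which sweep out a bisection $D$. The fixed locus of a non-symplectic order $3$ automorphism of a K3 surface is a disjoint union of smooth curves and isolated points; a local study at the singular fibres of $f$ shows no isolated points occur, and for generic $S'$ the monodromy on the rank one subsheaf $\ker(1-\omega)$ of $J[3]$ interchanges its two nonzero sections, so $D$ is irreducible. Thus $\mathrm{Fix}(\bar\sigma)=O\sqcup D$ is a disjoint union of two smooth curves, i.e.\ $J\in\MM_{0,2}$.

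It remains to reach the general point of $\MM_{0,2}$. For generic $S'$ the rational $\sigma$-equivariant transcendental Hodge structures of $S'$ and $J$ are isomorphic — a genus $1$ fibration and its Jacobian have, up to isogeny, the same transcendental motive — so the periods of $S'$ and $J$ determine one another, and $S'\mapsto J(S'/\PP^1)$ induces a dominant rational map $\MM_{0,1}\dashrightarrow\MM_{0,2}$. Since both spaces are irreducible of dimension $9$ by Theorem \ref{as}, the general surface in $\MM_{0,2}$ is indeed the Jacobian elliptic fibration of some $S'\in\MM_{0,1}$.

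I expect the main difficulty to lie in pinning down the \emph{type} $(0,2)$ of $\mathrm{Fix}(\bar\sigma)$: excluding isolated fixed points over the singular fibres and, above all, proving that $D$ is irreducible rather than a pair of disjoint sections (which would give type $(0,3)$). This is a monodromy statement for the $\omega$-eigensubsheaf of the $3$-torsion of $J$, provable by a direct computation on a Weierstrass model of $J$ over $\PP^1$ or \emph{a posteriori}, since type $(0,3)$ would confine $J$ to a locus of dimension $<9$, contradicting the genericity of $S'$ together with the period comparison. That period comparison is the other delicate point, being what turns ``non-constant'' into ``dominant onto the $9$-dimensional $\MM_{0,2}$''.
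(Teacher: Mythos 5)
The paper offers no argument of its own here: it simply cites \cite[Section 4]{Kon}, and the construction you give (ruling of the quadric inducing an isotrivial genus-one fibration; relative Jacobian carrying the induced order-3 automorphism) is precisely Kondo's, so in substance you are reconstructing the cited proof rather than finding a new route. Your reconstruction is essentially sound. The first assertion is fine: the $(3,3)$-count and Riemann--Hurwitz are correct, and your fallback via an isotropic nef class in $\Pic(S)$ covers the cone case (indeed, for the bare statement ``admits an elliptic fibration'' the lattice argument alone suffices, since the invariant lattice of the family already contains an isotropic vector). For the second assertion, the two points you flag as delicate --- irreducibility of the bisection $D$ and absence of isolated fixed points --- are not actually proved by your ``local study'' and ``monodromy'' remarks, but your \emph{a posteriori} patch does close them without circularity: the $\sigma$-equivariant isogeny $T(S')_\QQ\cong T(J)_\QQ$ plus Torelli shows the image of $S'\mapsto J$ is $9$-dimensional, Theorem \ref{as} forces the generic $J$ into $\MM_{0,1}$ or $\MM_{0,2}$, and the presence of the two disjoint fixed curves $O$ and (a component of) $D$ rules out $\MM_{0,1}$; membership in $\MM_{0,2}$ then retroactively gives $n=0$ and $D$ irreducible. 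The same dimension count gives dominance. The one place where you should be more careful in a final write-up is the phrase ``the periods of $S'$ and $J$ determine one another'': what you actually need, and what is true, is that the period point of $J$ in the common $9$-dimensional period domain is a finite-to-one function of that of $S'$ (the fibre of $S'\mapsto J$ is a coset of elements of order dividing $3$ in the Tate--Shafarevich group, hence finite), which is enough for the image to be $9$-dimensional.
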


\begin{proof} This is contained in \cite[Section 4]{Kon}.
\end{proof}

\subsection{Cuspidal cyclic cubic fourfolds and K3 surfaces}

\begin{theorem}[Boissi\`ere--Heckel--Sarti \cite{BHS}]\label{bhs} Let $Y\subset\PP^5$ be a cuspidal cyclic cubic fourfold, i.e. a cubic defined by an equation
  \[    x_0 f(x_1,\ldots,x_4) + g(x_1,\ldots,x_4) + x_5^3=0\ ,\]
  where $f$ and $g$ are homogeneous of degree 2 resp. 3. Assume that $f$ and $g$ are sufficiently general.
  Then the Fano variety of lines $F=F(Y)$ has transversal $A_2$-singularities along a surface $S\subset F$ which is isomorphic to the smooth complete intersection in $H_0:=\{x_0=0\}\cong\PP^4$ given by equations
      \[ \begin{cases}         &f(x_1,\ldots,x_4)=0\\
                            &g(x_1,\ldots,x_4)+ x_5^3=0\ .\\
                            \end{cases}\]  
 Moreover, the blow-up $\wt{F}\to F$ with center $S$ is a resolution of singularities, and $\wt{F}$ is isomorphic to the Hilbert scheme $S^{[2]}$.
 \end{theorem}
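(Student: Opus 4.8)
The plan is to establish the three assertions of the statement in turn—the identification of the singular surface $S$, the transversal $A_2$ nature of the singularities of $F$, and the isomorphism $\wt F\cong S^{[2]}$—the last being the substantial one.

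First I would locate the singular point of $Y$ and the surface $S$ inside $F$. For general $f,g$ the unique singular point of $Y$ is $P_0=[1:0:\cdots:0]$: in the affine chart $x_0=1$ the local equation is $f(x_1,\ldots,x_4)+g(x_1,\ldots,x_4)+x_5^3=0$, whose leading part is $f$, so for $f$ a general (hence nondegenerate) quadric the singularity is analytically the four-dimensional $A_2$ (cusp) normal form $x_1^2+\cdots+x_4^2+x_5^3=0$; the Jacobian criterion confirms there are no other singular points of $Y$. Next, a line $\ell$ through $P_0$ with direction $[0:q_1:\cdots:q_4:q_5]$ lies on $Y$ exactly when $t^2f(q)+t^3\bigl(g(q)+q_5^3\bigr)$ vanishes identically, i.e. when $f(q)=0$ and $g(q)+q_5^3=0$. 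This exhibits the family of lines of $Y$ through $P_0$ as the complete intersection $S\subset H_0$, and embeds it into $F$.

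Second, I would carry out the local study of the Fano scheme. Writing $F\subset\Gr(2,6)$ as the zero locus of the section of $\sym^3\Ss^{\ast}$ determined by the cubic form of $Y$, the scheme $F$ is cut out near a line $\ell$ by the four coefficients of the restricted binary cubic, and a standard computation shows $F$ is smooth at $\ell$ whenever $\ell$ avoids $P_0$. It then remains to analyse the lines meeting $P_0$, i.e. the points of $S$: choosing coordinates on $\Gr(2,6)$ adapted to such a line and expanding the four equations, the $x_5^3$ term is exactly what forces the transverse slice to be a surface $A_2$ (rather than $A_1$) Du Val singularity, so $F$ has transversal $A_2$ singularities precisely along $S$. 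A direct chart computation of the model $x_1^2+\cdots+x_4^2+x_5^3=0$ shows that a single blow-up of the reduced surface $S$ resolves these singularities, producing a smooth $\wt F$ with the expected chain of two rational curves over each point of $S$.

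Finally, and this is the heart of the matter, I would construct the isomorphism $\wt F\cong S^{[2]}$ by residuation of lines through the singular point. Given two lines $\ell_1,\ell_2\in S$ through $P_0$, the plane $\Pi=\langle\ell_1,\ell_2\rangle$ cuts $Y$ in a plane cubic containing $\ell_1\cup\ell_2$; since $P_0$ is a double point of $Y$ the residual is a single line $m\not\ni P_0$, giving a rational map $S^{[2]}\dashrightarrow F$. Conversely, for general $m\in F$ the plane $\langle m,P_0\rangle$ meets $Y$ in $m$ together with a residual conic which, since it must carry all the multiplicity $2$ of the section at the double point $P_0$, necessarily degenerates into two lines through $P_0$, i.e. a point of $S^{[2]}$; the two maps are mutually inverse, and the indeterminacy of $F\dashrightarrow S^{[2]}$ lies exactly along $S$. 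I would then check that blowing up $S$ resolves this indeterminacy and that the resulting morphism $\wt F\to S^{[2]}$ is an isomorphism. The main obstacle is precisely this last step: one must control the construction over the degenerate loci—coincident pairs $\ell_1=\ell_2$ (the Hilbert–Chow diagonal) on the $S^{[2]}$ side, and the lines of second type together with the exceptional divisor of the $A_2$-resolution on the $\wt F$ side—and match these exceptional divisors to upgrade the birational map to a biregular one. I expect this bijectivity-and-smoothness verification along the exceptional loci, rather than the birational construction itself, to demand the most care.
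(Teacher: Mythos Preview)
Your outline is correct and rests on the same geometric idea as the paper (which simply cites Boissi\`ere--Heckel--Sarti and sketches their construction): the residuation map through the cusp, sending a pair $\{\ell_1,\ell_2\}\in S^{[2]}$ of lines through $P_0$ to the third line of $Y$ in the plane $\langle\ell_1,\ell_2\rangle$. The difference is one of direction. You build rational maps both ways, blow up $F$ along $S$, and then propose to verify that $\wt F\to S^{[2]}$ is an isomorphism by matching exceptional loci on both sides. The paper instead shows that the residuation already extends to a \emph{regular morphism} $\varphi\colon S^{[2]}\to F$ (no indeterminacy over the diagonal), contracting a specific ``trident divisor'' $\Psi\subset S^{[2]}$ onto $S$, and that $\varphi$ itself coincides with the blow-up of $F$ with center $S$. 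This is a bit more economical: once $\varphi$ is known to be a morphism, the identification $\wt F\cong S^{[2]}$ is immediate and the delicate exceptional-locus matching you anticipate is bypassed. A second, minor difference: you identify $S\subset F$ directly as the locus of lines through the cusp, whereas the paper characterizes it as the fixed locus of the order~$3$ non-symplectic automorphism of $F$ induced by $x_5\mapsto\zeta_3 x_5$; the two descriptions agree, but the automorphism viewpoint is what later connects the construction to the moduli component $\MM_{0,1}$.
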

 
 \begin{proof} This is part of \cite[Theorem 4.1]{BHS}. Recall that an automorphism of a cubic fourfold $X$ is said to be symplectic if the induced automorphism on $H^4(X,\mathbb{Z})$ acts trivially on  $H^{3,1}(X)$. We note that the cubic $Y$ has an order 3 non-symplectic automorphism  (defined by multiplying $x_5$ with a cubic root of unity). This induces
 an order 3 non-symplectic automorphism of $F$, and $S\subset F$ is the fixed locus of this automorphism. The generality assumptions guarantee that $S$ is non-singular, and that $F$ does not contain planes passing through the cusp $z=[1:0:\ldots:0]$. In that case, it is proven in loc. cit. that there exists a birational morphism $\varphi\colon S^{[2]}\to F$. To define $\varphi$, first consider a pair of distinct points $x_1,x_2\in S$. The lines $\ell_i$, $i=1,2$ spanned by $x_i$ and the cusp $z$ are contained in the cubic $Y$, and the plane spanned by $\ell_1$ and $\ell_2$ cuts $Y$ along a third line $\ell_3$. One defines 
   \[ \varphi(\{x_1,x_2\}):=\ell_3\ .\] 
 It is then shown in loc. cit. that this extends to a birational morphism $\varphi$, contracting a certain ``trident divisor'' $\Psi\subset S^{[2]}$ to the surface $S\subset F$; the morphism $\varphi$ coincides with the blow-up with center $S$.
  \end{proof}

\subsection{The motive of cyclic cubic fourfolds and their Fano variety of lines}

\begin{theorem}[Laterveer \cite{cubic}]\label{cyclic} Let $Y\subset\PP^5$ be a cyclic cubic fourfold, i.e. a smooth cubic fourfold defined by the equation
  \[ g(x_0,\ldots, x_4)+ x_5^3=0\ ,\]
  where $g$ is a homogeneous polynomial of degree 3. Then $Y$ has finite-dimensional motive, in the sense of Kimura \cite{Kim}.
  \end{theorem}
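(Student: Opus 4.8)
The final statement is Theorem~\ref{cyclic} (Laterveer): a cyclic cubic fourfold $Y\subset\PP^5$ defined by $g(x_0,\ldots,x_4)+x_5^3=0$ has finite-dimensional motive. This is a statement about $Y$ itself, so the plan is to exploit the cyclic structure: the group $\mu_3=\langle\sigma\rangle$ acting by $x_5\mapsto\zeta_3 x_5$, and the fact that the quotient $Y/\sigma$ together with the fixed locus connects $Y$ to lower-dimensional and better-understood objects.

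\begin{proof}[Proof proposal]
The plan is to decompose the motive of $Y$ into pieces governed by the $\mu_3$-action and to identify each piece with the motive of a variety already known to be finite-dimensional. First I would record the structure of the triple cover $\pi\colon Y\to \PP^4$ given by projection from the point $[0:\cdots:0:1]$, away from the branch locus; the generator $\sigma$ of $\mu_3$ acts on $Y$ with $\sigma^\ast$ splitting the Chow motive $\mathfrak{h}(Y)$ into eigenspace summands under the action of the projectors $\tfrac13\sum_{i}\zeta_3^{-ij}\Gamma_{\sigma^i}$ for $j=0,1,2$. The invariant part $\mathfrak{h}(Y)^{\mu_3}$ is a direct summand of $\mathfrak{h}(\PP^4)$ (pulled back along $\pi$), hence is a sum of Tate motives and finite-dimensional. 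The main content is therefore the two non-invariant eigenspace summands, which are interchanged by complex conjugation and carry the transcendental part of $H^4(Y)$.

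Next I would relate these eigenmotives to the motive of a surface. The key classical fact is that the transcendental cohomology of a cyclic cubic fourfold is, via the Kuga--Satake and Hodge-theoretic correspondence, controlled by a K3-type Hodge structure; motivically this should be realized by a correspondence between $Y$ and an explicit surface (or a variety dominated by products of curves) carrying the same transcendental motive. Concretely, for cubics containing enough extra structure one reduces the transcendental motive of $Y$ to that of a surface of the type appearing earlier in the paper, whose finite-dimensionality is known (e.g. via \cite{Ped}, \cite{18}, or because it is dominated by products of curves). The step I expect to be the crux is constructing, or citing, an \emph{algebraic} correspondence inducing an isomorphism on the relevant eigenmotives rather than merely an isomorphism of Hodge structures: Hodge-theoretic control is not enough for finite-dimensionality, so one needs the correspondence to be realized by an actual cycle, and then to invoke Kimura's theorem that finite-dimensionality descends to and is stable under direct summands.

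Assuming the correspondence is in hand, the remaining steps are formal. Finite-dimensionality is preserved under passing to direct summands of motives, under $\oplus$, and is stable for motives of varieties dominated by products of curves \cite{Kim}; so once each eigenspace summand of $\mathfrak{h}(Y)$ is exhibited as a summand of a finite-dimensional motive, I would conclude that $\mathfrak{h}(Y)=\bigoplus_{j}\mathfrak{h}(Y)^{(j)}$ is finite-dimensional, using that a finite direct sum of finite-dimensional motives is finite-dimensional. The hard part is genuinely the motivic (as opposed to cohomological) identification of the transcendental eigenmotive; the eigenspace decomposition and the stability properties are routine once the correspondence is available.
\end{proof}
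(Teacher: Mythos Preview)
Your plan has a genuine gap at the step you yourself flag as the crux. You propose to identify the non-invariant eigenmotives of $Y$ with the transcendental motive of ``a surface of the type appearing earlier in the paper, whose finite-dimensionality is known.'' But the K3 surfaces appearing in this paper are precisely the ones whose finite-dimensionality is being \emph{deduced from} Theorem~\ref{cyclic}; invoking them here would be circular. More generally, there is no K3 surface naturally attached to a cyclic cubic fourfold whose Kimura finite-dimensionality is known a priori, so the proposed reduction has no target. You correctly observe that a Hodge-theoretic isomorphism would not suffice and that one needs an actual cycle, but you do not indicate any construction of such a cycle or any candidate variety on the other side.

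The paper's argument (from \cite{cubic}) is quite different in direction: one goes \emph{up} in dimension rather than down. One introduces the cyclic cubic \emph{fivefold}
\[
Z\colon\ g(x_0,\ldots,x_4)+x_5^3+x_6^3=0
\]
and produces an explicit embedding of Chow motives
\[
h(Y)\ \hookrightarrow\ h(Z)\otimes h(E)(-1)\ \oplus\ \bigoplus_i \LLL(m_i),
\]
with $E$ an elliptic curve (this is the motivic upgrade of the cohomological construction of van Geemen--Izadi \cite{vGI}). The point is that a smooth cubic fivefold has $A^i_{AJ}=0$ for all $i$ (cf.\ the references in the proof of Proposition~\ref{5foldfindim}), hence has finite-dimensional motive by Vial \cite[Theorem~4]{43}; tensoring with the motive of a curve and taking direct summands preserves finite-dimensionality. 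So the correspondence you were looking for exists, but it lands in $Z\times E$, not in a K3 surface, and finite-dimensionality on the target side comes from the triviality of Chow groups of cubic fivefolds, not from any prior K3 result.
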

  
  \begin{proof}
This is Theorem 3.1 of \cite{cubic}. The argument consists in showcasing an embedding of motives 
$$h(Y) \hookrightarrow  h(Z)\otimes h(E)(-1)\oplus \bigoplus_i \mathbb{L}(m_i),$$
where $E$ is an elliptic curve, and $Z$ is the cubic fivefold of equation
\[ g(x_0,\ldots, x_4)+ x_5^3 +x_6^3=0\ .\]
Since $Z$ has finite-dimensional motive (by Remark \ref{examples}), and $E$ is a curve, this implies the finite-dimensionality of the motive of $Y$. Remark that a similar construction in cohomology was studied by van Geemen and Izadi \cite{vGI}.

  \end{proof}

  \begin{theorem}[Laterveer \cite{22}]\label{YFY} Let $Y\subset\PP^{n+1}$ be a smooth cubic hypersurface, and let $F=F(Y)$ be the Fano variety of lines contained in $Y$. There is a relation of Chow motives
     \[ h(F)(-2)\oplus\bigoplus_{i=0}^n h(Y)(-i)\cong h(Y^{[2]})\ \ \ \hbox{in}\ \MM_{\rm rat}\ .\]
   \end{theorem}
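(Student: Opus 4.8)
The idea is to lift to $\MM_{\rm rat}$ the identity
\[ [Y^{[2]}]\ =\ [\PP^n]\cdot[Y]\ +\ \mathbb L^2\cdot[F]\qquad\hbox{in }K_0(\mathrm{Var}/\C), \]
due to Galkin and Shinder, by making the geometry underlying it explicit and then feeding it into the blow-up and projective bundle formulas for Chow motives. (As a reality check, taking $\mathbb Z/2$-invariants in the blow-up formula for $\mathrm{Bl}_{\Delta_Y}(Y\times Y)$ gives the standard decomposition $h(Y^{[2]})\cong\sym^2 h(Y)\oplus\bigoplus_{i=1}^{n-1}h(Y)(-i)$, so that the asserted formula is equivalent to $\sym^2 h(Y)\cong h(F)(-2)\oplus h(Y)\oplus h(Y)(-n)$.)

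\emph{Incidence varieties.} Let $Z\subset Y\times\Gr(1,n+1)$ be the variety of pairs $(y,\ell)$ with $y\in\ell$. The first projection realizes $Z$ as the $\PP^n$-bundle $\PP(T_{\PP^{n+1}}|_Y)$ over $Y$, so by the projective bundle formula $h(Z)\cong\bigoplus_{i=0}^n h(Y)(-i)$. Inside $Z$ sits the closed subvariety $Z_1=\{(y,\ell):y\in\ell\subset Y\}$; the second projection identifies $Z_1$ with the universal line $\mathcal L=\PP(\mathcal S)\to F$, where $\mathcal S$ is the tautological rank $2$ subbundle on $F\subset\Gr(1,n+1)$. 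Thus $Z_1$ is a $\PP^1$-bundle over $F$ with $h(Z_1)\cong h(F)\oplus h(F)(-1)$, and $\codim_Z Z_1=3$. On the other side, let $W\subset Y^{[2]}$ be the locus of length $2$ subschemes $\zeta$ whose linear span $\ell_\zeta$ is contained in $Y$. Since $\zeta\subset\ell_\zeta$ always, the assignment $\zeta\mapsto\ell_\zeta$ identifies $W$ with the relative Hilbert square of $\mathcal L/F$, i.e.\ with the $\PP^2$-bundle $\PP(\sym^2\mathcal S)\to F$, so that $h(W)\cong h(F)\oplus h(F)(-1)\oplus h(F)(-2)$ and $\codim_{Y^{[2]}}W=2$.

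\emph{The Galkin--Shinder roof and the bookkeeping.} If $(y,\ell)\in Z$ and $\ell\not\subset Y$, then $\ell\cap Y$ is a length $3$ subscheme of $\ell$ containing $y$, and the residual length $2$ subscheme $(\ell\cap Y)-y$ is a point of $Y^{[2]}$; conversely a point $\zeta\in Y^{[2]}$ with $\ell_\zeta\not\subset Y$ is recovered from the residual point of $\ell_\zeta\cap Y$. This yields an isomorphism $Z\setminus Z_1\cong Y^{[2]}\setminus W$, and by the work of Galkin and Shinder it extends to a morphism $\mathrm{Bl}_{Z_1}(Z)\to Y^{[2]}$ which is the blow-up of $Y^{[2]}$ along $W$; in particular $\mathrm{Bl}_{Z_1}(Z)\cong\mathrm{Bl}_W(Y^{[2]})$. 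Applying the blow-up formula for Chow motives to the two descriptions of this single variety gives
\[ h(Z)\oplus h(Z_1)(-1)\oplus h(Z_1)(-2)\ \cong\ h(Y^{[2]})\oplus h(W)(-1) \]
in $\MM_{\rm rat}$. Inserting the projective bundle computations above, both sides become direct sums of Tate twists of $h(F)$ and of $h(Y)$; removing the common summand $h(F)(-1)\oplus h(F)(-2)\oplus h(F)(-3)$ leaves precisely $h(F)(-2)\oplus\bigoplus_{i=0}^n h(Y)(-i)\cong h(Y^{[2]})$.

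\emph{The main obstacle.} The last step is where care is required, since $\MM_{\rm rat}$ is not known to satisfy the cancellation property in general. The hard part is to arrange the various direct-sum decompositions --- the one from the blow-up formula for the blow-up-along-$Z_1$ structure, the one from the blow-up formula for the (genuinely different) blow-up-along-$W$ structure on the same variety, and those from the projective bundle formula for $Z$, $Z_1$ and $W$ --- in such a way that the self-isomorphism of $h(\mathrm{Bl}_{Z_1}(Z))$ induced by the Galkin--Shinder identification restricts to the identity on the common summand, so that cancellation becomes legitimate. Concretely this amounts to tracking how the two exceptional divisors on the common model, and their relative $\mathcal O(1)$-classes, interact with the residual-point construction; carrying this out produces explicit mutually inverse correspondences realizing the stated isomorphism. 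This analysis is the substance of \cite{22}.
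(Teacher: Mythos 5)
The paper gives no proof of this statement beyond the citation to \cite{22}, and your sketch is an accurate outline of precisely the argument given there: lifting the Galkin--Shinder relation $[Y^{[2]}]=[\PP^n]\cdot[Y]+\mathbb{L}^2\cdot[F]$ to $\MM_{\rm rat}$ via the two blow-up descriptions of the common model $\mathrm{Bl}_{Z_1}(Z)\cong\mathrm{Bl}_W(Y^{[2]})$, with the codimensions, projective-bundle structures and final bookkeeping all correct. You also correctly isolate the one genuinely delicate point --- the failure of cancellation in $\MM_{\rm rat}$, which must be circumvented by exhibiting explicit mutually inverse correspondences --- and this is indeed where the substance of \cite{22} (and of the refinement in \cite[Theorem 3]{FLV3}) lies.
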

   
   \begin{proof} This is \cite[Theorem 5]{22}. A slightly stronger result is obtained in \cite[Theorem 3]{FLV3}.
     \end{proof}
     
 We state a proposition for later use:    
     
\begin{proposition}\label{5foldfindim}
Let $Z$ be a singular cubic fivefold with isolated singularities of type $A_n$, and
let $\tilde{Z}\to Z$ be a resolution of singularities. Then the motive of $\tilde{Z}$ is finite-dimensional.
\end{proposition}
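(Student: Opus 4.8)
The plan is to use the projection of $Z$ from one of its singular points to realise a resolution $\tilde Z$ as birational to $\PP^5$, then to decompose $h(\tilde Z)$ into Tate motives together with a piece controlled by the threefold of lines through that point, and finally to observe that this threefold is rationally connected, hence that its motive (and so the whole of $h(\tilde Z)$) is finite-dimensional, in fact of abelian type.

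First I would fix a singular point $p\in Z$. Since $p$ is a double point of the cubic $Z$, a general line through $p$ meets $Z$ in exactly one further point, so projection from $p$ defines a birational map $Z\dashrightarrow\PP^5$. Blowing up $p$ resolves its indeterminacy, and resolving further the finitely many remaining isolated $A_n$-singularities produces a resolution $\sigma\colon\tilde Z\to Z$ together with a birational \emph{morphism} $\rho\colon\tilde Z\to\PP^5$; moreover any resolution of $Z$ is dominated by one of this shape, and the motive of the smaller is a direct summand of the motive of the larger (as $\pi_\ast\pi^\ast=\ide$ for a birational morphism of smooth projective varieties), so it suffices to treat this $\tilde Z$. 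The non-isomorphism locus of $\rho$ lies over the finitely many images of $\mathrm{Sing}(Z)$ together with the subvariety $W\subset\PP^5$ parametrising the lines through $p$ contained in $Z$; writing the local equation of $Z$ at $p$ as $f_2+f_3$ with $f_2$, $f_3$ its degree $2$ and degree $3$ parts, one has $W=\{f_2=f_3=0\}$, i.e. a cubic section of the projectivised tangent cone $\PP(C_pZ)=\{f_2=0\}$, of expected dimension $3$, hence an lci complete intersection threefold of type $(2,3)$. Resolving an $A_n$-singularity introduces only exceptional divisors which are $\PP^1$-bundles over smooth quadrics (or smooth quadrics themselves), hence have Tate motive; so, using the blow-up formula and weak factorisation over $\C$, one arrives at a relation
$$h(\tilde Z)\ \oplus\ \bigoplus_a\mathds{1}(-a)^{\oplus n_a}\ \cong\ \bigoplus_b\mathds{1}(-b)^{\oplus m_b}\ \oplus\ \bigoplus_c h(\wt W)(-c)^{\oplus r_c}\qquad\text{in }\MM_{\mathrm{rat}},$$
where $\wt W\to W$ is a resolution of singularities.

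It then remains to prove that $h(\wt W)$ is finite-dimensional. Since $W$ is an lci complete intersection of dimension $3$ cut out by forms of degree $2$ and $3$ in $\PP^5$, adjunction gives $\omega_W\cong\OO_W(-1)$, so $W$ is a Gorenstein Fano threefold; hence $W$, and therefore its resolution $\wt W$, is rationally connected. Consequently $A_0(\wt W)\cong\QQ$, and the theorem of Bloch--Srinivas then yields $A^1_{hom}(\wt W)=A^3_{hom}(\wt W)=0$ and that the Abel--Jacobi map $A^2_{hom}(\wt W)\to J^2(\wt W)$ is an isomorphism; in other words $A^k_{AJ}(\wt W)=0$ for all $k$. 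By the result recalled above that varieties with Abel--Jacobi trivial Chow groups have finite-dimensional motive \cite{43}, $h(\wt W)$ is finite-dimensional. Feeding this back into the displayed relation and using the stability of finite-dimensionality under direct sums, Tate twists and direct summands (Kimura \cite{Kim}), we conclude that $h(\tilde Z)$ is finite-dimensional.

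The step I expect to be the main obstacle is the derivation of the displayed motivic relation from the geometry of the projection: one must check that, after resolving, the only motives that enter are Tate motives and twists of $h(\wt W)$ — that is, that the entire non-Tate part of $h(\tilde Z)$ is carried by the threefold of lines $W$, and that the (smooth) centres appearing in the weak factorisation between $\tilde Z$ and $\PP^5$ are all either of Tate type or built from $\wt W$. Once that bookkeeping is secured, the remaining point — finite-dimensionality of the rationally connected threefold $\wt W$ — is immediate from Bloch--Srinivas and the cited results, and no genuinely new ingredient is needed; note that this reduction to a threefold is essential, since in dimension $5$ triviality of the Chow group of $0$-cycles no longer forces the intermediate Abel--Jacobi groups to vanish.
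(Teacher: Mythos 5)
Your endgame (a rationally connected threefold has Abel--Jacobi trivial Chow groups by Bloch--Srinivas, hence finite-dimensional motive by Vial's theorem \cite{43}) is sound, but the displayed motivic relation that everything hinges on is not established, and the step you flag as ``the main obstacle'' is a genuine gap rather than bookkeeping. Weak factorisation connects $\wt{Z}$ to $\PP^5$ (or to $\mathrm{Bl}_W\PP^5$) through blow-ups and blow-downs along smooth centres, but it gives you no control whatsoever over what those centres are: there is no reason they should be of Tate type or dominated by $\wt{W}$, so the claim that the entire non-Tate part of $h(\wt{Z})$ is carried by $\wt{W}$ does not follow. Worse, the clean node picture you are implicitly relying on fails here: for an $A_n$ point with $n\ge 2$ the tangent cone $\{f_2=0\}$ has corank $1$, so $W=\{f_2=f_3=0\}$ is forced to be singular, $\mathrm{Bl}_W\PP^5$ is singular, and $\mathrm{Bl}_pZ$ still carries an $A_{n-2}$ point on the exceptional divisor (and even for $n=1$, further singular points of $Z$ produce singular points of $W$). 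None of the blow-ups on the $Z$-side is then covered by the blow-up formula for motives, which requires smooth centres in a smooth ambient variety. To make your route work you would need an explicit resolution with identified centres, not an appeal to weak factorisation.

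The paper's proof bypasses the projection geometry entirely and, in effect, runs your endgame directly on the fivefold. The key external input is the theorem of Hirschowitz--Iyer \cite{HI} that an \emph{arbitrary} (possibly singular) cubic fivefold $Z$ has $A_0(Z)=A_1(Z)=\QQ$; hence $A_0$ and $A_1$ of $Z$, and then of its smooth locus, are supported on a curve, and since the exceptional locus of $\wt{Z}\to Z$ is a tree of projective spaces, localization shows $A_0(\wt{Z})$ and $A_1(\wt{Z})$ are supported on curves, i.e.\ have niveau $\le 0$. The niveau refinement of the Bloch--Srinivas decomposition of the diagonal \cite[Thm.~1.7]{small} then bootstraps this to niveau $\le 1$ for \emph{all} Chow groups of $\wt{Z}$, equivalently $A^i_{AJ}(\wt{Z})=0$ for all $i$, and Vial's theorem concludes. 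Note that this directly answers the worry in your last sentence: in dimension $5$ triviality of $A_0$ alone is indeed not enough, but triviality of $A_0$ \emph{and} $A_1$ is, once fed into the generalized decomposition of the diagonal --- which is why no reduction to a threefold is needed.
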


\begin{proof} By the work of Hirschowitz-Iyer \cite[Section 1.7]{HI}, any (arbitrarily singular) cubic fivefold $Z$ has $A^{}_0(Z)=A^{}_1(Z)=\QQ$. (For {\em smooth\/} cubic fivefolds, this is also proven in \cite{Par}, \cite{Lew} and \cite{Ot}.) Thus, there exists a curve $C\subset Z$ (obtained as linear section) such that push-forward induces surjections
  \[ A_i(C)\ \twoheadrightarrow\ A_i(Z)\ \ \ (i=0,1)\ .\]
 Denoting $U\subset Z$ the non-singular locus of the cubic, it follows that there are also surjections
 \[ A_i(C\cap U)\ \twoheadrightarrow\ A_i(U)\ \ \ (i=0,1)\ .\]
 The exceptional divisor of the blow-up $\wt{Z}\to Z$ along the singular points consists of a tree of projective spaces. The localization sequence (for the inclusion of the exceptional divisor inside $\wt{Z}$) then implies that there exist curves $C_1,\ldots,C_r\subset \wt{Z}$ with the property that push-forward induces surjections
   \[ A_i(\bigcup_{j=1}^r C_j)\ \twoheadrightarrow\ A_i(\wt{Z})\ \ \ (i=0,1)\ .\]
   In the terminology of \cite{small}, this means that
   \[ \hbox{Niveau}\bigl( A_i(\wt{Z})\bigr) \le 0\ \ \ (i=0,1)\ .\]
   But then the Bloch--Srinivas ``decomposition of the diagonal'' argument (in the precise form of \cite[Thm. 1.7]{small}) implies that
    \[ \hbox{Niveau}\bigl( A_i(\wt{Z})\bigr) \le 1\ \ \ \forall i\ ,\]
    which is equivalent (cf. \cite[Lemma 1.5]{small}) to
    \[ A^i_{AJ}(\wt{Z})=0\ \ \ \forall i\ .\]
    This guarantees that the motive of $\tilde{Z}$ is finite-dimensional, thanks to Vial's result  \cite[Theorem 4]{43}.   
\end{proof}

\subsection{The motive of a Hilbert scheme}


\begin{theorem}[Shen--Vial \cite{SV}]\label{mck} 
		Let $S$ be a K3 surface, and
		let $X$ be the Hilbert square
		$S^{[2]}$.
		Then $X$ admits a self-dual MCK decomposition such that the  induced bigraded
		ring structure $A^\ast_{(\ast)}(X)$ on $A^\ast(X)$ coincides with the
		bigrading
		on $A^\ast(X)$ defined by the ``Fourier transform'' of \cite{SV}, and enjoys
		the
		following properties\,:
		\begin{enumerate}[(i)]
			\item \label{chern} $c_j(X)\in A^{j}_{(0)}(X)$ for all $j$\,;
			\item \label{hard} The multiplication map
			$\cdot D^2\colon A^2_{(2)}(X) \to A^4_{(2)}(X)$
			is an isomorphism for any choice of divisor $D\in A^1(X)$ with
			$\deg(D^4)\neq
			0$\,;
			\item \label{prod}
			The intersection product map
			$A^2_{(2)}(X)\otimes A^2_{(2)}(X) \to A^4_{(4)}(X)$
			is surjective.
		\end{enumerate}	
		Moreover, the incidence correspondence $\Gamma\subset S\times X$ induces an isomorphism
		\[\Gamma_\ast\colon\ \ A^2_{hom}(S)\ \xrightarrow{\cong}\ A^2_{(2)}(X)\ .\]
			\end{theorem}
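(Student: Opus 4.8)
The statement is, in essence, the main structural result of the monograph \cite{SV} in the special case $X=S^{[2]}$, so the plan is to follow the strategy developed there. The core of the argument is the construction of a \emph{Fourier transform} on $A^\ast(X)$. First I would produce a cycle $L\in A^2(X\times X)$ whose cohomology class is the inverse of the Beauville--Bogomolov form. For $X=S^{[2]}$ such an $L$ can be written down explicitly: denoting by $\mu\colon\wt{S\times S}\to X$ the degree $2$ quotient map from the blow-up of the diagonal of $S$, one assembles $L$ out of $\mu$, the small diagonal of $S$, and the Beauville--Voisin $0$-cycle of $S$. The technical heart of this step --- and what I expect to be the main obstacle --- is to verify that $L$ satisfies the quadratic ``Fourier relations'' of \cite{SV}: identities of the shape $L^2=\cdots$ and $L^3=\cdots$ in $A^\ast(X\times X)$, expressing the powers of $L$ in terms of $L$ itself, the diagonal $\Delta_X$, and pull-backs of the canonical $0$-cycle. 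Proving these requires careful cycle computations in the Chow rings of $\wt{S\times S}$ and of $S\times S\times S$, leaning on Voisin's description of the Chow ring of $S^{[2]}$.

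Granting the relations, one sets $\mathcal F(\alpha):=(p_2)_\ast\bigl(e^{L}\cdot p_1^\ast\alpha\bigr)$ and deduces that, up to signs and normalisation, $\mathcal F$ restricts to an involution on each Chow group; the resulting eigenspace decomposition is the Fourier decomposition $A^i(X)=\bigoplus_j A^i_{(j)}(X)$. The next step is to realise this grading as induced by a genuine, self-dual Chow--Künneth decomposition $\{\pi_j\}_j$ of the diagonal. The projectors $\pi_j\in A^4(X\times X)$ are explicit universal polynomials in $L$, $\Delta_X$ and the cycles $o_X\times X$ and $X\times o_X$, where $o_X$ is the canonical $0$-cycle of $X$; that $\Delta_X=\sum_j\pi_j$, $\pi_i\circ\pi_j=\delta_{ij}\pi_i$, $\pi_j={}^t\pi_{8-j}$, and $A^i_{(j)}(X)=(\pi_{2i-j})_\ast A^i(X)$ then follow formally from the Fourier relations, and by construction the induced bigrading on $A^\ast(X)$ is the one coming from the Fourier transform.

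To upgrade this to a \emph{multiplicative} Chow--Künneth decomposition, one must show that the small diagonal $\delta_X\in A^{8}(X\times X\times X)$ is Künneth-compatible with the $\pi_j$, equivalently that the cup product on $A^\ast(X)$ is bigraded; here I would again invoke Voisin's results on $A^\ast(S^{[2]})$ together with the Fourier relations, and it is at this stage that the $K3$ hypothesis (rather than that of a general hyperkähler fourfold) is genuinely used. Once multiplicativity is in place, properties \ref{chern}--\ref{prod} drop out: \ref{chern} because the Chern classes $c_j(X)$ are polynomials in tautological and Beauville--Voisin cycles, all manifestly of degree $(0)$; \ref{hard} is the ``hard Lefschetz for the Fourier decomposition'' of \cite{SV}, reflecting the fact that both $A^2_{(2)}(X)$ and $A^4_{(2)}(X)$ are isomorphic to the transcendental part $A^2_{hom}(S)$; and \ref{prod} reflects the identification of $A^4_{(4)}(X)$ with (a quotient of) $\sym^2 A^2_{hom}(S)$.

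Finally, for the assertion about the incidence correspondence $\Gamma\subset S\times X$: using the explicit formulas for $\pi_2$ and $\pi_4$ one checks that $\Gamma_\ast$ carries $A^2_{hom}(S)$ into $A^2_{(2)}(X)$, and then one exhibits an inverse as a rational multiple of $({}^t\Gamma)_\ast$ on $A^2_{(2)}(X)$; the key computation is that of $\Gamma\circ{}^t\Gamma$ and ${}^t\Gamma\circ\Gamma$ modulo the ideal generated by canonical cycles, which once more follows from the Fourier relations and the Beauville--Voisin relations on $S$. Since in the present paper Theorem~\ref{mck} is needed only as a black box, I would in practice simply quote \cite{SV} for all of the above.
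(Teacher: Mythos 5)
Your plan for the body of the theorem --- construct $L$, verify the Fourier relations, build the projectors, check multiplicativity, and in practice just cite \cite{SV} --- is exactly what the paper does: it quotes \cite[Theorem 13.4]{SV} for the MCK decomposition, \cite[Theorem 15.8]{SV} for the compatibility with the Fourier transform, \cite[Lemma 13.7(iv)]{SV} for \eqref{chern} and \cite[Proposition 15.6]{SV} for \eqref{prod}. So for those parts there is nothing to add.

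The one place where a pure citation does not suffice is the ``moreover'' statement about the incidence correspondence: the paper itself says this is only \emph{implicit} in \cite{SV} and supplies an argument, and your sketch of that argument has a genuine problem. You propose to invert $\Gamma_\ast$ by a rational multiple of $({}^t\Gamma)_\ast$, but this does not typecheck: since $\Gamma\in A^2(S\times X)$ with $\dim X=4$, the transpose acts as $\Gamma^\ast\colon A^4(X)\to A^2(S)$, so it cannot be applied directly to $A^2_{(2)}(X)$ to land back in $A^2_{hom}(S)$. The paper's fix is precisely to route through property \eqref{hard}: for a divisor $D$ with $\deg(D^4)\neq 0$ one has the identity $D^2\cdot(S_x-S_y)=q(D)\bigl([x,o]-[y,o]\bigr)$ in $A^4(X)$ (combining \cite[Proof of Proposition 12.8]{SV} with \cite[Proposition 12.6]{SV}), whence the composition $\Gamma^\ast\circ(\cdot D^2)\circ\Gamma_\ast$ on $A^2_{hom}(S)$ is multiplication by $q(D)\neq 0$; this gives injectivity of $\Gamma_\ast$. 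Surjectivity is then a separate point you would still need: it follows because $A^2_{(2)}(X)$ is generated by the cycles $S_x-S_y$ \cite[Theorem 2 and Proposition 15.6]{SV}, and these are visibly in the image of $\Gamma_\ast$. So your overall strategy is the right one, but for the final assertion you should replace the proposed inverse $({}^t\Gamma)_\ast$ by $\Gamma^\ast\circ(\cdot D^2)$ and add the generation statement for surjectivity.
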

	
	\begin{proof}		
		We consider the MCK on $X$ constructed in \cite[Theorem 13.4]{SV}\,; its
		relation with the Fourier transform is \cite[Theorem 15.8]{SV}. Statement~\eqref{chern} is \cite[Lemma~13.7(iv)]{SV}, while statement~\eqref{prod} is
		\cite[Proposition~15.6]{SV}.
		
		The ``moreover'' part is implicit in \cite{SV}; let us make it explicit. As in loc. cit., for any $x\in S$ we will write $S_x\subset X$ for the locus of $0$-dimensional subschemes with support containing the point $x$. Let $D\in A^1(X)$ be any divisor with $\deg(D^4)\not=0$. Let $x,y\in S$ be any 2 points. Combining \cite[Proof of Proposition 12.8]{SV} and \cite[Proposition 12.6]{SV}, we find an equality of 0-cycles
		\[  D^2\cdot (S_x - S_y) = q(D)\Bigl( [x,o] - [y,o]\Bigr)\ \ \ \hbox{in}\ A^4(X)\ ,\]
		where $q()$ refers to the Beauville--Bogomolov quadratic form on $X$, and $o\in S$ is any point lying on a rational curve.
		This implies that the composition
		\[  A^2_{hom}(S)\ \xrightarrow{\Gamma_\ast}\ A^2_{(2)}(X)\ \xrightarrow{\cdot D^2}\ A^4_{(2)}(X)\ \xrightarrow{\Gamma^\ast}\ A^2_{hom}(S) \]
		is equal to multiplication with $q(D)\not=0$; in particular the first arrow is an injection.
		As for the surjectivity of $\Gamma_\ast$, this follows from the fact that $A^2_{(2)}(X)$ is generated by expressions of the form $S_x-S_y$ \cite[Theorem 2 and Proposition 15.6]{SV}.
		    \end{proof}


\subsection{Spread argument} A key ingredient in this paper is the machinery of “spread”, as developed by Voisin, and its consequences. This allows us to deal efficiently with algebraic cycles in a family of varieties.

\begin{lemma}[Voisin \cite{Vo}]\label{spread}
Let $\pi:\mathcal{Y}\to B$ be a flat morphism of algebraic varieties, where $B$ is smooth of dimension $r$, and let $Z\in A_n (\mathcal{Y})$ be a cycle. Then the set $B_Z$ of points $t\in B$ such that $Z_t:=Z_{|\YY_t}$ vanishes in $A_{n-r}(\YY_t)$ is a countable union of closed algebraic subsets of $B$.
\end{lemma}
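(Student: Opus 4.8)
The plan is to reduce the statement to a Noetherian union argument by stratifying the base $B$ according to the locus where the cycle $Z$ restricts to zero modulo rational equivalence, and then to recognize each stratum as a countable union of locally closed subsets, using the fact that rational equivalence on a variety is witnessed by finitely many parameters. First I would recall the standard presentation of the Chow group: for a variety $V$, a cycle class vanishes in $A_\ast(V)$ precisely when the cycle is a finite $\QQ$-linear combination of divisors of rational functions on subvarieties $W\subset V$; equivalently, there is a subvariety $\wt W\subset V\times\PP^1$, proper over $V$ and dominating $\PP^1$, whose fibres over $0$ and $\infty$ cut out the positive and negative parts of the cycle. Applying this fibrewise to $\pi\colon\YY\to B$, the condition ``$Z_t=0$ in $A_{n-r}(\YY_t)$'' becomes the condition that $t$ lies in the image of the projection to $B$ of some component of a relative Hilbert (or Chow) scheme parametrizing such ``rational equivalence data'' interpolating $Z_t^+$ and $Z_t^-$.

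The key steps, in order, would be: (1) Write $Z=Z^+-Z^-$ with $Z^\pm$ effective (or at least, reduce to the case of an effective relative cycle by linearity); shrinking $B$ we may assume $\YY\to B$ and the supports of $Z^\pm$ are flat over $B$, so that $Z^\pm_t$ have constant dimension $n-r$ in the fibres. (2) Introduce the relative Hilbert scheme (or the space of relative algebraic cycles in the sense of Kollár/Suslin--Voevodsky) $\mathcal H\to B$ parametrizing subschemes of $\YY\times_B(\YY\times\PP^1)$ that are proper over the base and provide a rational equivalence between $Z^+_t$ and $Z^-_t$ on $\YY_t$; this $\mathcal H$ has countably many irreducible components $\mathcal H_\alpha$, each of finite type over $\C$, hence with constructible image $B_\alpha:=\overline{\pi(\mathcal H_\alpha)}$ in $B$. (3) Observe that $t\in B_Z$ if and only if $t$ lies in the image of some $\mathcal H_\alpha$ meeting the fibre over $t$, and that for each $\alpha$ the set of $t$ for which $Z^+_t\sim Z^-_t$ is \emph{witnessed} by a point of $\mathcal H_\alpha$ lying over $t$; by Chevalley's theorem each $\pi(\mathcal H_\alpha)$ is constructible, and — crucially — one checks that $B_Z$ equals a countable union of the \emph{closed} sets one gets after noting that if $Z_t=0$ for $t$ in a constructible set then $Z_t=0$ on its closure as well, since rational equivalence is a closed condition in a flat family once the equivalence data degenerates (here one invokes properness of $\mathcal H_\alpha\to B$ on suitable components, or passes to compactifications and uses specialization of rational equivalence). (4) Conclude $B_Z=\bigcup_\alpha \overline{\pi(\mathcal H_\alpha)}$, a countable union of closed algebraic subsets.

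The main obstacle I anticipate is step (3): controlling the closure, i.e., showing that the locus $B_Z$ is genuinely a countable union of \emph{closed} sets rather than merely of \emph{constructible} sets. The point is that $\pi(\mathcal H_\alpha)$ is a priori only constructible, and one must argue that passing to its Zariski closure does not enlarge $B_Z$ — equivalently, that rational equivalence specializes: if $t_0$ is in the closure of a family of points $t$ with $Z_t\sim 0$, then $Z_{t_0}\sim 0$. This is where one uses that the equivalence data (the subvarieties of $\YY_t\times\PP^1$) live in a \emph{proper} family, so a specialization of the data over $t_0$ exists and provides the required rational equivalence on $\YY_{t_0}$; this is exactly the content of the continuity/specialization property of Chow groups (Fulton, or the theory of relative cycles). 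Once this is in place, the countability is automatic because a scheme of finite type over $\C$ has only countably many irreducible components when we allow the whole Hilbert scheme (which is a countable disjoint union of quasi-projective pieces indexed by Hilbert polynomials), and the result follows. I would reference Voisin's original argument in \cite{Vo} for the details of this specialization step, as it is the technical heart of the lemma.
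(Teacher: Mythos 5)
The paper does not prove this lemma at all---it simply cites Voisin (\cite[Lemma 3.1]{Vo}, with proof in \cite[Proposition 2.4]{Vo4ds})---and your sketch is essentially that standard argument: parametrize the rational-equivalence data $(W_i,\varphi_i)$ witnessing $Z_t\sim 0$ by countably many relative Chow/Hilbert-type schemes over $B$ and take the union of their images. Your instinct at step (3) is right and the correct resolution is the one you name, namely properness: one works with relative Chow varieties of cycles of bounded degree in $\YY_t\times\PP^1$, which are \emph{projective} over $B$, and the condition ``$W(0)-W(\infty)=Z_t$ up to a common effective cycle'' is closed on them, so each image in $B$ is genuinely closed (the alternative phrasing ``rational equivalence is a closed condition, so $Z_t=0$ extends to the closure'' is circular on its own---it is exactly the properness of the parameter space plus specialization of rational equivalence that makes it true).
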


\begin{proof} This is \cite[Lemma 3.1]{Vo}; a proof can be found in  \cite[Proposition 2.4]{Vo4ds}.
\end{proof}

\begin{corollary}\label{spreadkim}
Let $\pi:\mathcal{Y}\to B$ be a smooth projective morphism whose fibers are surfaces with 0 irregularity, such that the very general fiber has finite-dimensional Chow motive. Then the Chow motive of any fiber is finite-dimensional.
\end{corollary}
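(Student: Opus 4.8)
The plan is to reduce the finite-dimensionality of an arbitrary fibre to the vanishing of a single explicit correspondence which can be spread out over $B$, and then to apply Voisin's Lemma~\ref{spread}.

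Replacing $B$ by a resolution of singularities (this changes neither the fibres nor the hypothesis) we may assume $B$ smooth. First I would produce relative Chow--Künneth projectors. Write $S_b:=\YY_b$. As $\pi$ is projective, $\YY$ carries a $\pi$-very ample line bundle; let $\eta\in A^1(\YY)$ be its first Chern class, let $d:=\eta^2\cdot[S_b]$ be the degree of the fibres (constant over the connected base $B$), and set $\xi:=\tfrac1d\,\eta^2\in A^2(\YY)$, a relative $0$-cycle which restricts to a degree-one $0$-cycle $\xi_b$ on each $S_b$. With $p_1,p_2\colon\YY\times_B\YY\to\YY$ the two projections, put
\[ \Pi_0:=p_1^\ast\xi,\qquad \Pi_4:=p_2^\ast\xi,\qquad \Pi_2:=\Delta_{\YY/B}-\Pi_0-\Pi_4\ \in\ A^2(\YY\times_B\YY)\ .\]
On the fibre over $b$ these restrict to the usual Chow--Künneth projectors of the surface $S_b$: indeed $\pi_0(b)=[\xi_b\times S_b]$ and $\pi_4(b)=[S_b\times\xi_b]$ are orthogonal idempotents projecting onto $H^0$ and $H^4$, and since $q(S_b)=0$ one has $H^1(S_b)=H^3(S_b)=0$, so $\pi_2(b):=\Delta_{S_b}-\pi_0(b)-\pi_4(b)$ is idempotent with $h^2(S_b)=(S_b,\pi_2(b))$. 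Hence $h(S_b)=\one\oplus h^2(S_b)\oplus\LLL^2$ in $\MM_{\rm rat}$, and $h(S_b)$ is finite-dimensional if and only if $h^2(S_b)$ is.

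Next I would use the even/odd structure of finite-dimensional motives (Kimura~\cite{Kim}). The realization of $h^2(S_b)$ sits in cohomological degree $2$, so once $h^2(S_b)$ is finite-dimensional it is automatically \emph{evenly} finite-dimensional (the realization of an oddly finite-dimensional motive being concentrated in odd degrees), and then $H^\ast\bigl(\wedge^m h^2(S_b)\bigr)=\wedge^m H^2(S_b,\QQ)=0$ as soon as $m>b_2:=\dim_\QQ H^2(S_b,\QQ)$, a quantity constant over $B$. Since a finite-dimensional motive with vanishing Betti realization is the zero motive, this yields a \emph{uniform} exponent $N:=b_2+1$ with
\[ h(S_b)\ \text{finite-dimensional}\ \Longrightarrow\ \wedge^{N}h^2(S_b)=0\ .\]
Then I would perform the $N$-th exterior power relatively: on $\WW:=\YY^{\times_B N}\times_B\YY^{\times_B N}$, which is smooth and hence flat over $B$, set
\[ \Gamma\ :=\ \Bigl(\tfrac1{N!}\sum_{\sigma\in\Sy_N}\mathrm{sgn}(\sigma)\,\Gamma_\sigma\Bigr)\circ\bigl(\Pi_2\times_B\cdots\times_B\Pi_2\bigr)\ \in\ A^{2N}(\WW)\ ,\]
where $\Gamma_\sigma$ is the relative graph of the permutation $\sigma$ and the product has $N$ factors; by construction $\Gamma|_{\WW_b}$ is precisely the idempotent defining $\wedge^{N}h^2(S_b)$, because forming products of cycles, composing relative correspondences, and taking relative graphs all commute with restriction to a fibre. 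Thus $\wedge^{N}h^2(S_b)=0$ if and only if $\Gamma|_{\WW_b}=0$ in $A^{2N}(\WW_b)$.

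Finally I would run the spreading argument. By hypothesis $h(S_b)$ is finite-dimensional for $b$ outside a countable union $\Sigma$ of proper closed subsets of $B$, so by the two previous paragraphs $\Gamma|_{\WW_b}=0$ for every such $b$. On the other hand, Lemma~\ref{spread} applied to the flat morphism $\WW\to B$ and the cycle $\Gamma$ shows that $B':=\{b\in B:\Gamma|_{\WW_b}=0\ \text{in}\ A^{2N}(\WW_b)\}$ is a countable union of closed algebraic subsets of $B$. Since $B\setminus B'\subseteq\Sigma$, the variety $B$ is the union of the countably many closed sets forming $B'$ together with those forming $\Sigma$; as a complex variety is not a countable union of proper closed subsets, one of these closed sets must equal $B$, and it cannot lie in $\Sigma$, so $B'=B$. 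Therefore $\wedge^{N}h^2(S_b)=0$, hence $h^2(S_b)$ — and with it $h(S_b)=\one\oplus h^2(S_b)\oplus\LLL^2$ — is finite-dimensional, for every fibre. The step I expect to be the main obstacle is securing the \emph{uniformity} of $N$: finite-dimensionality of $h(S_b)$ a priori only gives $\wedge^{N(b)}h^2(S_b)=0$ with $N(b)$ depending on $b$, and bypassing this is exactly the role of the passage through "evenly finite-dimensional" and the realization-dimension bound $N=b_2+1$; the remaining points — the fibrewise behaviour of $\Pi_2$ and $\Gamma$, and the fact that a complex variety is not a countable union of proper subvarieties — are routine.
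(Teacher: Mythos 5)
Your proposal is correct and follows essentially the same strategy as the paper: secure a \emph{uniform} exponent $N$ (via even finite-dimensionality, which holds because $q=0$ kills the odd cohomology) so that the wedge-power projector vanishes on the very general fibre, realize that projector as the fibrewise restriction of a single relative cycle over $B$, and apply Lemma~\ref{spread} to propagate the vanishing to every fibre. The only cosmetic difference is that you first split off $\one\oplus\LLL^2$ and antisymmetrize $h^2$ with $N=b_2+1$, whereas the paper antisymmetrizes the full motive with $n=b_2+3$; your write-up is if anything more careful about the points the paper leaves implicit (the relative construction of the projector and the countable-union argument at the end).
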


\begin{proof} 
Let us denote $B^\circ \subset B$ the dense subset where the fibers have finite-dimensional Chow motive, as by hypothesis. Since the fibers are surfaces with $q=0$, finite-dimensionality of their motive is equivalent to the even finite-dimensionality.  This means that, for  $b\in B^\circ$, there exists a non-negative integer $n\in \mathbb{Z}$ such that the projector $\gamma_b^n: h(\mathcal{Y}_b)^{\otimes n} \to h(\mathcal{Y}_b)^{\otimes n}$ defining $\wedge^nh(\mathcal{Y}_b)$ is rationally equivalent to 0. Actually, in our case $n=2+b_2(\mathcal{Y}_b)+1$, since for this value of $n$ we have the vanishing in cohomology. Since $\gamma_b$ is a projector (up to rational equivalence) and the motive $h(\mathcal{Y}_b)$ is assumed to be finite-dimensional, the vanishing follows for the Chow ring. Hence, the cycles $\gamma_b^n$ can be  considered as a family $\Gamma^n \to B^\circ$, whose restriction $\Gamma^n_{|\mathcal{Y}_b}$ to the fiber of $\mathcal{Y}$ over $b$ is the projector $\gamma^n_b$. The closure of $\Gamma^n$ inside $\mathcal{Y}$ gives a cycle over $B$ whose restriction to the very general fiber is rationally equivalent to zero. Then, by Lemma \ref{spread}, the restriction to any fiber is rationally equivalent to zero and the claim is proved.
\end{proof}

\subsection{Chow cohomology}

While dealing with (blow-ups of) singular varieties, we will use Fulton's Chow cohomology \cite[Chapter 17]{F}, because of its properties of fonctoriality. For any (possibly singular) variety $X$, we will write $A_\ast(X)$ for the usual Chow groups (with $\QQ$-coefficients) and $A^\ast(X)$ for the operational Chow cohomology groups (with $\QQ$-coefficients). By construction, $A^\ast(X)$ acts on $A_\ast(X)$; this action is denoted as a cap product. For $X$ non-singular of dimension $n$, there is an isomorphism 
  \[ \cap [X]\colon  A^i(X)\ \xrightarrow{\cong}\ A_{n-i}(X)\ ,\]
  and so for non-singular $X$ we tacitly identify $A^i(X)$ with $A_{n-i}(X)$.

\smallskip

Recall moreover that, thanks to the work of Bloch--Gillet--Soulé \cite{BGS} and Totaro \cite{Tot}, there exists a natural cycle class map

$$A^\ast(X)\to \Gr^W_{\ast} H^\ast(X)$$
(where $W_\ast$ denotes Deligne's weight filtration),
that allows us to define homologically trivial cycles $A^\ast_{hom}(X)$ also in the singular context. Below, we will need the following:

\begin{proposition}\label{chowchow}
Let $X$ be a singular 4-dimensional projective variety, with transversal $A_n$-singularities along a smooth surface $S$ with irregularity $q(S)=0$. Assume that, via the blow-up $p:\tilde{X}\to X$ along $S$, we obtain a smooth fourfold $\tilde{X}$. Then the map
 \[ p^*: A^2_{hom}(X) \ \to \ A^2_{hom}(\widetilde{X}) \]
 is an isomorphism.
 
 In addition, if $H^5(X)=0$ then the maps 

\[ \begin{split}
p_*: A_2^{hom}(\widetilde{X}) & \to  A_2^{hom}(X);\\
\cap [X]: A^2_{hom}(X) & \to  A_2^{hom}(X)\\
\end{split}\]
are injections.
\end{proposition}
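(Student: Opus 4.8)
The plan is to analyze the blow-up $p\colon\wt X\to X$ along the smooth surface $S$ and to compare Chow groups and cohomology of $X$ and $\wt X$ by means of the blow-up formulas (in the form available for operational Chow cohomology, \cite[Chapter 17]{F}) together with the compatible cycle class maps to $\Gr^W_\ast H^\ast$ of \cite{BGS}, \cite{Tot}. Write $j\colon E\hookrightarrow \wt X$ for the exceptional divisor and $q\colon E\to S$ for the projection. Since the singularities are transversal $A_n$ along $S$, the exceptional divisor $E$ is a tower (or chain) of projective-bundle-type varieties over $S$; in particular, $E$ is smooth and its Chow groups and cohomology are those of $S$ tensored with $\QQ[t]$-type factors in a controlled range of degrees. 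The key structural input is therefore: (1) a short exact (split, rationally) sequence relating $A^2(\wt X)$ to $A^2(X)$ and the "extra" classes supported on $E$ (pull-backs from $S$, suitably twisted), and (2) the analogous statement in $\Gr^W H^\ast$, so that the two are compatible and the claims about $A^\ast_{hom}$ follow by a diagram chase.

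First I would establish the isomorphism $p^\ast\colon A^2_{hom}(X)\xrightarrow{\ \cong\ } A^2_{hom}(\wt X)$. The map $p^\ast$ is injective because $p_\ast p^\ast=\mathrm{id}$ on $A^\ast(X)$ (projection formula for operational classes), and it is compatible with cycle class maps, hence restricts to $A^2_{hom}$. For surjectivity one uses the blow-up decomposition: every class in $A^2(\wt X)$ is $p^\ast(\alpha)$ plus a class of the form $j_\ast(q^\ast(\text{class on }S)\cdot(\text{powers of }c_1\mathcal{O}_E(E)))$, i.e. pushed forward from $E$. These "extra" classes are algebraic and — crucially, using $q(S)=0$, so that $A^1(S)=A^1_{hom}(S)\oplus\text{Néron--Severi}$ has no transcendental part and $A^0(S)=\QQ$ — the ones that could be homologically trivial in $\wt X$ must already come from $p^\ast$ of something: the relevant graded pieces $\Gr^W_2 H^2$ of $X$ and $\wt X$ differ exactly by algebraic classes (cycle classes of components of $E$ and of divisors on $S$), with no room for a transcendental discrepancy. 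Tracking which $A^2$-classes on $\wt X$ die in cohomology and showing they are all $p^\ast$ of homologically trivial classes on $X$ is the heart of this part; the input $q(S)=0$ is what prevents a $H^1(S)$-contribution from spoiling the count.

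Next, for the "in addition" part assume $H^5(X)=0$. For $p_\ast\colon A_2^{hom}(\wt X)\to A_2^{hom}(X)$ being injective: if $p_\ast\beta=0$, then by the blow-up sequence $\beta$ is supported on $E$, i.e. $\beta=j_\ast\gamma$ with $\gamma\in A_\ast(E)$ of the right dimension; using the projective-bundle structure of $q\colon E\to S$ one writes $\gamma$ in terms of $q^\ast$ of cycles on $S$ of dimension $\le 2$, and homological triviality of $\beta$ in $\wt X$ forces these to be homologically trivial on $S$; but $A_1^{hom}(S)=0$ (Rojtman/Mumford type, or simply $A_1(S)=\mathrm{NS}(S)$ up to torsion, torsion-free with $\QQ$-coefficients) and $A_0^{hom}(S)$ contributes only the "support" directions, and one checks the resulting pushed-forward class on $\wt X$ is actually zero — i.e. $\beta=0$. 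The hypothesis $H^5(X)=0$ enters to guarantee the relevant piece of the long exact (localization/weight) sequence has no cokernel obstruction, so that "homologically trivial on $\wt X$" genuinely pins $\gamma$ down. For the second injection, $\cap[X]\colon A^2_{hom}(X)\to A_2^{hom}(X)$: compose with $p^\ast$ (resp. use $\cap[\wt X]$ on the smooth $\wt X$, where it is an isomorphism) and the commutative square $p^\ast(\alpha)\cap[\wt X] = p^!(\alpha\cap[X])$ relating cap products on $X$ and $\wt X$; since $p^\ast$ is injective on $A^2_{hom}$ (first part), $\cap[\wt X]$ is an isomorphism, and $p_\ast$ is injective on $A_2^{hom}$ (just proved), a short diagram chase forces $\cap[X]$ on $A^2_{hom}(X)$ to be injective as well.

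I expect the main obstacle to be the bookkeeping for the exceptional divisor of a transversal $A_n$-singularity: unlike an ordinary blow-up along a smooth center in a smooth ambient, here $\wt X\to X$ is the blow-up of a \emph{singular} $X$ along $S$, and one must justify that (after this single blow-up) $\wt X$ is smooth with $E$ a chain/tower of $\PP^1$- or $\PP^2$-bundles over $S$ (this is exactly the local picture of $A_n$ resolved in a normal slice, cf.\ the description in Theorem \ref{bhs}), and that the blow-up formula for operational Chow cohomology applies in this mildly singular situation with the correct split exact sequences both in $A^\ast$ and in $\Gr^W H^\ast$. Once that local-to-global structure is in hand, all three statements reduce to elementary diagram chases using $q(S)=0$ and $H^5(X)=0$; the subtlety is purely in setting up the blow-up sequences correctly for the singular $X$, and in matching them against the weight-graded cohomology so that "homologically trivial" is controlled on both sides.
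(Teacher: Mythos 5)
Your overall strategy (compare blow-up sequences in operational Chow cohomology with their images in weight-graded cohomology, then chase diagrams using $q(S)=0$ and $H^5(X)=0$) is the same as the paper's, and your outline of the ``in addition'' part is essentially right: the paper also uses the exact sequence $A_2(E)\to A_2(\wt X)\oplus A_2(S)\to A_2(X)\to 0$, the injectivity $A_2(E)\hookrightarrow H_4(E)$ (which rests on $q(S)=0$ and the $\PP^1$-bundle structure of the components of $E$), and $\cap[X]=p_\ast p^\ast$ in the cap-product sense. But the first part of your argument has a genuine gap, concentrated exactly at the point you flag as ``the main obstacle'' and then do not resolve. For injectivity of $p^\ast$ you assert $p_\ast p^\ast=\mathrm{id}$ on $A^\ast(X)$ ``by the projection formula for operational classes''. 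For singular $X$ there is no pushforward on operational Chow cohomology; the projection formula only gives $p_\ast(p^\ast\alpha\cap[\wt X])=\alpha\cap[X]$, i.e.\ the composite $A^2(X)\to A^2(\wt X)\cong A_2(\wt X)\to A_2(X)$ equals $\cap[X]$ --- which is precisely one of the two maps whose injectivity the proposition asserts, and then only on $A^2_{hom}$ and only under the extra hypothesis $H^5(X)=0$. Your injectivity argument is therefore circular.

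For surjectivity of $p^\ast$ on $A^2_{hom}$ you invoke ``the blow-up decomposition: every class in $A^2(\wt X)$ is $p^\ast(\alpha)$ plus a class pushed forward from $E$''. That split decomposition is the blow-up formula for a smooth ambient variety; it is not available when $X$ is singular along the center, which is exactly the situation here. (Note also that $E$ is not smooth for $n\ge 2$: it is a chain of $\PP^1$-bundles over $S$ meeting along copies of $S$, so only a normal crossing divisor.) The tool that replaces both missing steps is Kimura's descent exact sequence for abstract blow-ups in operational Chow cohomology, $0\to A^2(X)\to A^2(\wt X)\oplus A^2(S)\to A^2(E)$ (\cite[Theorem 2.3]{Kim0}): it gives injectivity of $p^\ast$ directly (since $A^\ast(S)\to A^\ast(E)$ is split injective by the projective bundle formula), and a snake-lemma comparison with $\Gr^W H^4$ yields $0\to A^2_{hom}(X)\to A^2_{hom}(\wt X)\oplus A^2_{hom}(S)\to A^2_{hom}(E)$, reducing surjectivity of $p^\ast$ on $A^2_{hom}$ to the surjectivity of $A^2_{hom}(S)\to A^2_{hom}(E)$, which is then checked component by component using the $\PP^1$-bundle structure and $q(S)=0$. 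Without this descent input (or an equivalent), the heart of the first claim is missing from your proposal.
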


\begin{proof} Let $E\subset\wt{X}$ denote the exceptional divisor of the blow-up morphism $p$. Kimura \cite[Theorem 2.3]{Kim0} has shown that there is an exact sequence in operational Chow cohomology
  \[ 0\ \to\ A^2(X)\ \to\ A^2(\wt{X})\oplus A^2(S)\ \to\ A^2(E)\ ,\]
  where all arrows are given by pullbacks. In particular, $p^\ast\colon A^2(X)\to A^2(\wt{X})$ is injective. To show that the restriction of $p^\ast$ to $A^2_{hom}(X)$ is surjective, we consider the commutative diagram with exact rows
  \[ \begin{array}[c]{   cccccccc   }   0 &\to&A^2(X)&\to&A^2(\wt{X})\oplus A^2(S) &\to&  \ima &\to 0\\
                                             &&  && && &\\
                                             &&\downarrow &&\downarrow && \downarrow&\\
                                             && && && &\\
                                             H^3(E)= 0 &\to& \Gr^W_4 H^4(X) &\to& H^4(\wt{X})\oplus H^4(S) &\to& \Gr^W_4 H^4(E) &\to \\
                                             \end{array}\]
       where $\ima$ is shorthand for $ \ima\bigl( A^2(\wt{X})\oplus A^2(S)\to A^2(E)\bigr)$, and $H^3(E)=0$ because of the hypotheses.
       Applying the snake lemma to this diagram furnishes us with an exact sequence
       \begin{equation}\label{snake}   0\ \to\ A^2_{hom}(X)\ \to\ A^2_{hom}(\wt{X})\oplus A^2_{hom}(S)\ \to\ \ima\cap A^2_{hom}(E)\ .\end{equation}
      We now look at the pullback
       \[ A^2_{hom}(S)\ \to\ A^2_{hom}(E)\ ,\]
       and we claim this is a surjection. This claim suffices to show the first statement of the proposition, in view of the exact sequence \eqref{snake}. As for the claim, let $E_1,\ldots,E_n$ denote the irreducible components of $E$. Each $E_j$ is a $\PP^1$-bundle over the regular surface $S$, and hence
       
\begin{eqnarray*}
       A^2(E_j)&= &A^2(S)\oplus A^1(S),\\
       A^2_{hom}(E_j)&=& A^2_{hom}(S).
\end{eqnarray*}
                                 
  The claim then follows from the exact sequence
  \[ 0\ \to\ A^2_{hom}(E)\ \to\ \bigoplus_{j=1}^r A^2_{hom}(E_j)\ \to\ \bigoplus_{j<k} A^2_{hom}(E_j\cap E_k)\ \]       
  (which is again obtained via the snake lemma),
  plus the fact that each intersection $E_j\cap E_k$ is isomorphic to $S$.                        
       
    As for the ``in addition'' part of the proposition, this is proven in similar fashion. We consider the commutative diagram with exact rows
            \[ \begin{array}[c]{   cccccccc   }    &\to&A_2(E)&\to&A_2(\wt{X})\oplus A_2(S) &\to&  A_2(X) &\to 0\\
                                             &&  && && &\\
                                             &&\downarrow  &&\downarrow && \downarrow&\\
                                             && && && &\\
                                             H^5(X)= 0 &\to&  H_4(E) &\to& H_4(\wt{X})\oplus H_4(S) &\to&  H_4(X) &\to \\
                                             \end{array}\]    
                                             where horizontal arrows are pushforwards (the exactness of the upper row follows from \cite[Theorem 1.8]{Kim0}). The left vertical arrow is an injection thanks to Lemma \ref{leftarrow} below.
                            Applying the snake lemma to this diagram (and observing that $A_2^{hom}(S)=0$ for obvious reasons), we obtain an injection
                            \[    p_\ast\colon\ \ A_2^{hom}(\wt{X})\ \hookrightarrow \ A_2^{hom}(X)\ .\]
                            
                  Finally, the projection formula for Chow cohomology \cite[Chapter 17]{F} gives an equality    
                  \[  \cap [X]= p_\ast p^\ast\colon\ \ A^2_{hom}(X)\ \to\ A_2^{hom}(X)    \ ,\]
                  and so the results established above imply that $\cap [X]\colon    A^2_{hom}(X) \to A_2^{hom}(X) $ is also injective.
                  
                  \begin{lemma}\label{leftarrow} In the above set-up, the cycle class map induces an injection
                  \[ A_2(E)\ \hookrightarrow\ H_4(E)\ .\]
                   \end{lemma}    
                   
                   To prove the lemma, we consider the commutative diagram with exact rows
                        \[ \begin{array}[c]{   ccccccccc   }    &\to&\bigoplus_{j<k} A_2(E_j\cap E_k)&\to&  \bigoplus_{j=1}^n A_2(E_j) &\to&  A_2(E) &\to &0\\
                                             &&  && && &&\\
                                             &&\downarrow {\cong} &&\downarrow && \downarrow&&\\
                                             && && && &&\\
                                              &\to&  \bigoplus_{j<k} H_4(E_i\cap E_j) &\to& \bigoplus_{j=1}^n H_4(E_j) &\to&  H_4(E) &\to & \\
                                             \end{array}\]    
                                             (the left vertical arrow is an isomorphism, since $E_i\cap E_j$ is a surface).
                                             Since each $E_j$ is a $\PP^1$-bundle over $S$, we have
                                             \[ A_2(E_j)=A^1(E_j)= A^1(S)\oplus A^0(S) \ \hookrightarrow\ H^2(S)\oplus H^0(S)=H^2(E_j)\ .  \]
                                             An easy diagram chase then concludes the proof of the lemma.
                                             \end{proof}

\section{Main result}

\begin{theorem}\label{main}
Let $S\subset\PP^4$ be a K3 surface defined as a smooth complete intersection with equations
  \[ \begin{cases}  &f(x_0,\ldots,x_3)=0\\
                            &g(x_0,\ldots,x_3)+ x_4^3=0\ ,\\
                            \end{cases}\]
                       where $f$ and $g$ are homogeneous polynomials of degree 2 resp. 3.
                       
                 Then $S$ has finite-dimensional motive, in the sense of Kimura \cite{Kim}.
\end{theorem}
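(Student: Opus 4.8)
The plan is to deduce finite-dimensionality of $h(S)$ from that of cyclic cubic fourfolds (Theorem \ref{cyclic}), by routing through the Fano variety of lines of the cuspidal cubic fourfold furnished by Theorem \ref{bhs} and, ultimately, through a cubic fivefold. First I would reduce to the very general surface in the family. The surfaces $S$ in the statement are parametrized by the generic pairs $(f,g)$ and thus form an irreducible $9$-dimensional family $\pi\colon\mathcal S\to B$ of smooth projective surfaces, each of which is a K3 surface and in particular has irregularity $q=0$; so by Corollary \ref{spreadkim} it is enough to prove finite-dimensionality for the very general member. I may therefore assume $f,g$ sufficiently general, so that Theorem \ref{bhs} applies and produces the cuspidal cyclic cubic fourfold
\[ Y\colon\ x_0 f(x_1,\ldots,x_4)+g(x_1,\ldots,x_4)+x_5^3=0\ \subset\ \PP^5 \]
(with an isolated $A_2$ singularity at $z=[1:0:\cdots:0]$), whose Fano variety of lines $F=F(Y)$ has transversal $A_2$-singularities along a copy of $S$, and for which the blow-up $p\colon\wt F\to F$ with centre $S$ is smooth and isomorphic to $S^{[2]}$.

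Next I would reduce the whole problem to $S^{[2]}$. By the de Cataldo--Migliorini decomposition \cite{dCM} one has $h(S^{[2]})\cong\operatorname{Sym}^2 h(S)\oplus h(S)(-1)$ in $\MM_{\rm rat}$, and since $h(S)=\one\oplus h^2(S)\oplus\one(-2)$ for a K3 surface, $h(S)$ is visibly a direct summand of $h(S^{[2]})$. As submotives of finite-dimensional motives are finite-dimensional \cite{Kim}, it suffices to prove that $h(\wt F)=h(S^{[2]})$ is finite-dimensional. (Equivalently, one may work with a resolution $\wt Y:=\mathrm{Bl}_z Y$ of the cubic fourfold itself: the lines of $Y$ through the cusp $z$ are exactly the points of $S\subset\{x_0=0\}\cong\PP^4$, so projection from $z$ realizes $\wt Y$ as the blow-up of $\PP^4$ with centre $S$, whence $h(\wt Y)\cong\bigoplus_{j=0}^4\LLL(j)\oplus h(S)(-1)$ and it is equivalent to prove $h(\wt Y)$ finite-dimensional.)

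The core of the argument is to link $\wt F=S^{[2]}$ to a cubic fivefold with finite-dimensional motive. Carrying out the construction of \cite{cubic} underlying Theorem \ref{cyclic}, applied to the cyclic structure of $Y$ in the variable $x_5$ (i.e. the order-$3$ non-symplectic automorphism $\sigma\colon x_5\mapsto\omega x_5$, whose fixed locus on $F$ is $S$), brings in the cubic fivefold
\[ Z\colon\ x_0 f(x_1,\ldots,x_4)+g(x_1,\ldots,x_4)+x_5^3+x_6^3=0\ \subset\ \PP^6\ , \]
of which $Y$ is the hyperplane section $\{x_6=0\}$, and for which $Z\to\PP^5$ is the cyclic triple cover branched along $Y$. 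For general $f,g$ the fivefold $Z$ has a single isolated singular point, and a resolution $q\colon\wt Z\to Z$ has exceptional locus a union of rational varieties (quadric cones and smooth quadric fourfolds). Since every cubic fivefold, however singular, satisfies $A_0=A_1=\QQ$ by Hirschowitz--Iyer, the decomposition-of-the-diagonal argument of Proposition \ref{5foldfindim} — which only uses the rationality of the exceptional pieces — goes through and yields $A^i_{AJ}(\wt Z)=0$ for all $i$, hence $h(\wt Z)$ is finite-dimensional by Vial \cite[Theorem 4]{43}. Now I would transport finite-dimensionality back: adapting the motivic construction of \cite{cubic} to this singular situation, working $\sigma$-equivariantly with the resolutions $\wt Z$ and $\wt Y$, passing between the Fano variety of lines and the Hilbert square on the cubic-fourfold side via Theorem \ref{YFY}, using the Shen--Vial relations of Theorem \ref{mck} on $S^{[2]}$, and using Proposition \ref{chowchow} to compare homologically trivial cycles on $F$ and on $\wt F=S^{[2]}$ (the hypotheses $q(S)=0$ and $H^5(F)=0$ being satisfied), one should arrive at an embedding of Chow motives of the shape
\[ h(\wt F)\ \hookrightarrow\ h(\wt Z)\otimes h(E)(-1)\ \oplus\ \bigoplus_i\LLL(m_i)\ , \]
where $E$ is the Fermat elliptic curve $u^3+v^3+w^3=0$. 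Since $E$ is a curve and $h(\wt Z)$ is finite-dimensional, the right-hand side is finite-dimensional, hence so is the direct summand $h(\wt F)=h(S^{[2]})$, and therefore $h(S)$.

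I expect the last step to be the main obstacle: carrying the correspondences of \cite{cubic} through the singular fourfold $Y$, the singular Fano variety $F$ and the singular fivefold $Z$ (and their chosen resolutions) in a controlled, compatible way, and checking the precise inputs needed — in particular that the argument of Proposition \ref{5foldfindim} really covers the singularity type occurring on $Z$, that $H^5(F)=0$ so that Proposition \ref{chowchow} is available, and that Proposition \ref{chowchow} suffices to transport the relevant projectors from $F$ to $S^{[2]}$. The preliminary results collected in §2 (Chow cohomology of blow-ups of singular varieties, Propositions \ref{chowchow} and \ref{5foldfindim}, and the spread argument) are tailored precisely to make these manipulations with singular varieties legitimate, so Steps 1--3 should be comparatively routine once that bookkeeping is set up.
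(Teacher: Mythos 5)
Your outline matches the paper's strategy at the level of ingredients (spread to the very general member, Boissi\`ere--Heckel--Sarti to get the cuspidal cyclic cubic fourfold $Y$ with $\wt F\cong S^{[2]}$, the cubic fivefold $Z$ with its isolated singularity handled by Proposition \ref{5foldfindim}, and Vial's theorem), and your reduction to $h(S^{[2]})$ via de Cataldo--Migliorini is legitimate. But the step you flag as ``the main obstacle'' is exactly where the paper's actual content lies, and the mechanism you propose for it is not the one that works. You cannot ``carry the correspondences of \cite{cubic} through the singular $Y$, $F$, $Z$'' directly: the construction of Theorems \ref{cyclic} and \ref{YFY} is only available for the \emph{smooth} cyclic cubics. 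What the paper does instead is put $Y$ as the central fiber $Y_0$ of a $1$-parameter family $\YY\to B$ whose other fibers are smooth, observe that the correspondences $\Gamma_b,\Psi_b$ realizing $h(F_b)\hookrightarrow h(Z_b^2\times E^2)(\ast)\oplus\cdots$ are \emph{generically defined} over $B^\circ=B\setminus\{0\}$, choose arbitrary extensions over all of $B$, and invoke Voisin's spread lemma (Lemma \ref{spread}) to specialize the identity $\Psi^\circ\circ\Gamma^\circ=\Delta$ to the central fiber. This specialization only holds up to an error term $R$ with $(p_0\times p_0)_\ast R=0$ (supported over the singular locus of $F_0$), and the whole point of Proposition \ref{chowchow} is to show that such an $R$ acts as zero on $A^2_{hom}(\wt F_0)$. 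None of this degeneration argument appears in your write-up.

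Relatedly, the output you expect from this step --- a genuine embedding of Chow motives $h(\wt F)\hookrightarrow h(\wt Z)\otimes h(E)(-1)\oplus\bigoplus_i\LLL(m_i)$ --- is stronger than what the method delivers. Because of the error term $R$, the paper only obtains a statement at the level of Chow groups: the identity of $A^2_{hom}(\wt F)$ factors through $A^\ast(\wt\GG|_0)$ (Proposition \ref{corrida}). This does not by itself make $h(S^{[2]})$ a summand of something finite-dimensional, so your de Cataldo--Migliorini reduction cannot be closed off. The paper instead restricts to $A^2_{hom}(S)\hookrightarrow A^2_{hom}(\wt F)$ via Shen--Vial (Theorem \ref{mck}) and then applies the surface-specific lemma \cite[Lemma 3.1]{SV} to $\Delta_S-\delta\circ\gamma$ to produce an injection $h_{tr}(S)\hookrightarrow h(\wt\GG)\oplus\bigoplus_i\LLL(n_i)$, which is what yields finite-dimensionality. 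You need both the degeneration/spread mechanism and this final surface trick; as written, your proposal asserts the conclusion of the hard step rather than proving it.
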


\begin{proof} In view of Corollary \ref{spreadkim}, we may assume the polynomials $f$ and $g$ are sufficiently general. Then, Theorem \ref{bhs} implies that the surface $S$ is related to a cuspidal cyclic cubic fourfold $Y$ via the Fano variety of lines $F=F(Y)$. The key fact that we need to prove the Theorem is the content of the following proposition.

\begin{proposition}\label{corrida}
Let $Y$ be a general cyclic cuspidal cubic fourfold, and $F,\wt{F}$ as in Theorem \ref{bhs}.
There exist correspondences $\alpha\in A^*(\wt{F}\times \wt{G}),\ \beta \in A^*(\wt{G}\times \wt{F})$, where $\wt{G}$ is a finite union of smooth projective varieties with finite dimensional motive, such that the composition

\begin{equation}
A^2_{hom}(\wt{F}) \xrightarrow{\alpha_*} A^*(\wt{G}) \xrightarrow{\beta_*} A^*(\wt{F})
\end{equation}

is the identity.
\end{proposition}

Before we prove Proposition \ref{corrida}, let us show how it implies the Theorem. By Theorem \ref{bhs}, the blow-up $\wt{F}$ is isomorphic to the Hilbert square $S^{[2]}$. By the work of Shen--Vial (cf. Theorem \ref{mck}), we have injections 
  \[ A^2_{hom}(S)\ \hookrightarrow\  A^2_{(2)}(S^{[2]})\ \hookrightarrow\ A^2_{hom}(S^{[2]})= A^2_{hom}(\wt{F})\ .\] 
 Combining this with Proposition \ref{corrida}, we observe that the composition

\begin{equation}
A^2_{hom}(S) \xrightarrow{\gamma_*} A^*(\wt{G}) \xrightarrow{\delta_*} A^2(S)
\end{equation}
is the identity, for some correspondences $\gamma$ and $\delta $. Applying Lemma 3.1 of \cite{SV} to $\Delta_S - \delta \circ \gamma$, this gives a correspondence that induces an injection of motives

$$(\gamma,h): h_{tr}(S) \hookrightarrow h(\wt{G})\oplus \bigoplus_i \mathbb{L}(n_i) \ \ \hbox{in}\ \MM_{\rm rat}\ ,$$
where the last direct summand is just a finite sum of twisted Lefschetz motives. It follows that $S$ has finite-dimensional motive, as requested.
\end{proof}

It remains to prove the proposition:

\begin{proof}(of Proposition \ref{corrida} )

We can find a family $\YY\to B$ of cyclic cubic fourfolds with $B$ smooth one-dimensional such that $Y=Y_0$ is the fiber over $0\in B$, and the other fibers $Y_b$, $b\not=0$ are smooth cyclic cubic fourfolds. Also, $\YY$ is a smooth quasi-projective variety (this follows from smooth base-change, because the universal cubic fourfold $\YY\to \PP H^0(\PP^5,{\mathcal O}_{\PP^5}(3))$ is smooth). Let us write
$\FF\to B$ for the corresponding family of Fano varieties of lines, $\ZZ\to B$ for the corresponding cubic fivefolds (cf. the proof of Theorem \ref{cyclic}). Theorems \ref{cyclic} and \ref{YFY} imply that the fibers $F_b$, $b\not=0$ are Kimura finite-dimensional. More precisely, looking at the argument of Theorems \ref{cyclic} and \ref{YFY}, for any $b\not=0$ one finds inclusions of Chow motives
 
  \begin{equation}\label{inc}   h(F_b)\ \hookrightarrow\ h(Y_b^{2})(2)\oplus \bigoplus h(Y_b)(\ast)\ \hookrightarrow\ h((Z_b)^2\times E^2)(\ast)\oplus \bigoplus h(Z_b\times E)(\ast)\ \ \ \hbox{in}\ \MM_{\rm rat}\ ,\end{equation}
where $E$ is an elliptic curve (which is independent of $Y_b$) and $Z_b$ is the cubic fivefold associated to $Y_b$ (cf. Theorem \ref{cyclic}).   What's more, inspection of the proof of Theorems \ref{cyclic} and \ref{YFY} reveals that the correspondence $\Gamma_b$
  defining the inclusion \eqref{inc} and its left-inverse $\Psi_b$ are {\em generically defined\/} with respect to the base $B^\circ:= B\setminus\{0\}$. That is, there exist relative correspondences 
  \[  \begin{split} &\Gamma^\circ\ \in\ A^\ast\bigl(\FF\times_{B^\circ} (\CC\times_{B^\circ} \CC \times E^2)\bigr)\oplus A^\ast\bigl(\FF\times_{B^\circ} (\CC \times E)\bigr)\ ,\\
                        &\Psi^\circ\ \in\ A^\ast\bigl( (\CC\times_{B^\circ} \CC\times E^2)\times_{B^\circ} \FF\bigr)\oplus A^\ast\bigl( (\CC \times E)\times_{B^\circ} \FF\bigr)\ ,\\
                        \end{split}\]
         with the property that 
    \begin{equation}\label{overcirc}    (\Psi^\circ\circ\Gamma^\circ)\vert_b  =\Delta_{\FF}\vert_b\ \ \ \hbox{in}\ A^4(F_b\times F_b)\ \ \ \forall b\in B^\circ\ .\end{equation}

For the sake of readability, we will set 
  \[ \GG:=(\ZZ \times E)^{2/B} \coprod (\ZZ\times E)^{\coprod r}\ .\]                
A straightforward computation shows that the central cubic fivefold fiber $Z_0$ has one isolated $A_3$ singularity. 
We will need to consider a new family $\wt{\ZZ}\to B$, which is obtained from $\ZZ$ by blowing up the singular point of the central fiber. Remark that the total space of ${\ZZ}$ is smooth, since the universal cubic fivefold is a projective bundle over $\PP^6$, and we just apply a smooth base change. This induces a family $\wt{\GG}\to B$, with the property that the fibers of $\GG$ and $\wt{\GG}$ over $B^\circ$ are the same, but the fiber over $0$ is
\begin{equation}\label{Gdesing}
 \wt{\GG}\vert_0 =\wt{\GG}_0 \cup P,
\end{equation}
  where $\wt{\GG}_0$ is a resolution of singularities of the singular central fiber $\GG_0$, and $P$ is some exceptional divisor which is induced by the blow-up $\wt{\ZZ}\to \ZZ$.

On the other hand, we define the family $\wt{\FF}\to B$ as the blow-up of $\FF$ along the fixed locus of the fiberwise automorphism. By Theorem \ref{bhs}, we have that the central fiber $\wt{\FF}_0$ is smooth and isomorphic to the Hilbert square $S^{[2]}$ of the K3 surface $S$.

\smallskip

For any extension $\Sigma \in A^*(\FF\times_B{\FF})$ such that $\Sigma_{|B^\circ}= \Psi^\circ \circ \Gamma^\circ$, we have equality 
  \[ \Sigma\vert_b=\Delta_\FF\vert_b=\Delta_{F_b}\ \ \ \hbox{in}\ A^4(F_b\times F_b)\ \ \ \ \forall b\in B^\circ \] 
  (this is equality \eqref{overcirc}).
Applying the spread argument (Lemma \ref{spread}), this implies that
the restriction to the central fiber satisfies an equality

\begin{equation}\label{bla}
\Sigma\vert_{0}=\Delta_{F_0}\ \ \hbox{in}\ A_4(F_0\times F_0)\ .
\end{equation}

We will now show how to choose a convenient $\Sigma$ to which we will apply equality \eqref{bla}. Let us denote by $p:\wt{\FF}\to{F}$ the blow-down morphism, $p_0:\wt{F_0}\to F_0$ its restriction to the central fiber, and let us set 
    \[ \widetilde{\Gamma}^\circ:= (p,\ide)^*(\Gamma^\circ)\ \in\ A^4(\wt{\FF}\times_{B^\circ}\wt{\GG})\ ,\  \ \wt{\Psi}^\circ:= (p,\ide)^*(\Psi^\circ)\ \in\ A^4(\wt{\GG}\times_{B^\circ}\wt{\FF})\ .\] We can choose (non-canonical) extensions 
     \[ \widetilde{\Gamma}\ \in\ A^4(\wt{\FF}\times_{B}\wt{\GG})\ ,\  \ \wt{\Psi}\ \in\ A^4(\wt{\GG}\times_{B}\wt{\FF})\ \]    
     with the property that
  $\wt{\Psi}_{|B^\circ}= \wt{\Psi}^\circ$ and $\wt{\Gamma}_{|B^\circ}= \wt{\Gamma}^\circ$. By general properties of the formalism of relative correspondences \cite[Section 8.1.2]{MNP} we have the equality

\begin{equation*}
(\wt{\Psi}\circ \wt{\Gamma})\vert_{0}=\wt{\Psi}\vert_{0}\circ \wt{\Gamma}\vert_{0}\ \ \mathrm{in}\ A^4(\wt{F}_0\times \wt{F}_0). 
\end{equation*}
                        
Now we are in position to define

$$\Sigma:= (p\times p)_*(\wt{\Psi}\circ \wt{\Gamma}).$$

By applying equality \eqref{bla} in $A_4(F_0\times F_0)$ to this choice of $\Sigma$, we obtain a decomposition

\begin{equation}\label{withR}
\Delta_{\wt{F_0}}= \wt{\Psi}_{\vert 0} \circ \wt{\Gamma}_{\vert 0} + R\ \mathrm{in}\ A^4(\wt{F_0}\times \wt{F_0}),
\end{equation} 
where $R$ is a cycle such that $(p_0\times p_0)_*R=0$. This holds since $(p_0\times p_0)_*\Delta_{\wt{F}_0}=\Delta_{F_0}$ and so by functoriality 
  \[(p_0\times p_0)_*\bigl((\wt{\Psi} \circ \wt{\Gamma})_{\vert 0}\bigr)= (p_0\times p_0)_*(\wt{\Psi} \circ \wt{\Gamma})_{\vert 0}\ \ \ \hbox{in}\   A_4(F_0\times F_0)   .\] 
  
 We now claim
 that the correspondence $R$ is such that
   \[ R_\ast=0\colon\ \ A^2_{hom}(\wt{F}_0)\ \to\ A^2_{hom}(\wt{F}_0)\ .\] 
   This claim, together with equality \eqref{withR} shows that
      $$  A^2_{hom}(\wt{F}_0)\ \stackrel{(\wt{\psi}_{|0})_*}{\hookrightarrow} \ A^*(\wt{\GG}_{|0})\ \stackrel{(\wt{\Gamma}_{|0})_*}{\rightarrow}\ A^*(\wt{F}_0)$$ is the identity map. Looking at Equation \eqref{Gdesing} combined with Proposition \ref{5foldfindim}, one sees that $\wt{\GG}_{|0}$ is a finite union of smooth projective varieties with finite-dimensional motive, as requested.
      
 It remains to prove the claim; for this we rely on Proposition \ref{chowchow} for $p_0^*$ and $p_{0*}$  (note that the hypotheses of Proposition \ref{chowchow} are satisfied, as $F_0$ is the Fano variety of lines on a singular cubic fourfold, and so $F_0$ has no odd-degree cohomology \cite[Theorem 6.1]{GS}). This proposition reduces the claim to proving that the composition
   \[  A^2_{hom}(F_0)      \ \xrightarrow{(p_0)^\ast}\  A^2_{hom}(\wt{F}_0)\ \xrightarrow{R_\ast}\  A^2_{hom}(\wt{F}_0)\ \xrightarrow{(p_0)_\ast}\ A_2^{hom}(F_0) \]
   is the zero map. But the projection formula for Chow cohomology \cite[Chapter 17]{F} implies that this composition is the same as the composition
   
    \[   A^2_{hom}(F_0)\ \xrightarrow{(\pi_1)^\ast}\ A^2_{hom}(F_0\times F_0)\ \xrightarrow{\cap (p_0\times p_0)_\ast (R)}\ A_2^{hom}(F_0\times F_0)\ \xrightarrow{(\pi_2)_\ast}\ A_2^{hom}(F_0)\ , \]
where $\pi_1,\pi_2\colon F_0\times F_0\to F_0$ denote the projections on the 2 factors.
 This last composition is the zero map, as $(p_0\times p_0)_*R=0$ by construction. The claim is proven, and hence so is Proposition \ref{corrida}.
%
  \end{proof}

\begin{remark} As the referee points out, for any $0\not=b\in B$, the fibre of the family $\wt{\FF}\to B$ is the blow-up $\wt{F}_b$ of $F_b$ at the fixed locus of the order 3 automorphism.
This fixed locus is the Fano surface of lines of the smooth cubic threefold associated to $F_b$, and so (using Remark \ref{examples}) the fibre $\wt{F}_b$ has finite-dimensional motive. This gives an alternative way of proving Proposition \ref{corrida}.
\end{remark}

\section{The Kuga--Satake Hodge conjecture}

In this section, we prove the Kuga--Satake Hodge conjecture for our K3 surfaces. The argument is very similar to that of Theorem \ref{main}: we use that the K3 surface is related to cyclic cubic fourfolds, plus the fact that the Kuga--Satake Hodge conjecture is known for cyclic cubic fourfolds (thanks to work of van Geemen--Izadi).

\begin{theorem}[Kuga--Satake] Let $V$ be a polarized Hodge structure of K3 type. There exists an abelian variety $KS(V)$, the {\em Kuga--Satake variety\/} associated to $V$, such that there is an
embedding of Hodge structures
  \[ \iota_V\ \colon\ V\ \hookrightarrow\ H^2(KS(V)\times KS(V),\QQ)\ .\]
\end{theorem}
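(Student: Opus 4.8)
The plan is to carry out the classical Clifford-algebra construction of Kuga and Satake. Write $Q$ for the bilinear form underlying the polarization on $V$, sign-normalized so that $Q_\RR$ has signature $(2,\dim_\QQ V-2)$ with $Q$ positive definite on the real $2$-plane $P:=(V^{2,0}\oplus V^{0,2})\cap V_\RR$; let $C(V)=C(V,Q)$ be the associated Clifford algebra and $C^+(V)\subset C(V)$ its even part. First I would pick an oriented orthogonal basis $f_1,f_2$ of $P$, normalized so that $J:=f_1f_2\in C^+(V_\RR)$ satisfies $J^2=-1$; then \emph{left} multiplication by $J$ defines a complex structure on the real vector space $C^+(V_\RR)$. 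Fixing an integral structure $V_{\mathbb{Z}}\subset V$ produces a lattice $C^+(V_{\mathbb{Z}})\subset C^+(V_\RR)$, hence a complex torus $A:=C^+(V_\RR)/C^+(V_{\mathbb{Z}})$ with $H_1(A,\QQ)\cong C^+(V)$ carrying a Hodge structure of weight $-1$ (equivalently $H^1(A,\QQ)\cong C^+(V)^\vee$, of weight $1$).

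Next I would make $A$ into a polarized abelian variety. By Deligne's lemma one chooses an invertible element $\delta\in C^+(V)$, fixed up to sign by the main anti-involution $\iota$ of $C(V)$, such that the $\QQ$-bilinear form $(x,y)\mapsto \pm\,\mathrm{Tr}_{C^+(V)/\QQ}\big(\iota(x)\,\delta\,y\big)$ is a Riemann form compatible with $J$; this exhibits $A=KS(V)$ as a polarized abelian variety. Then I would produce the Clifford-multiplication map: fixing $v_0\in V$ with $Q(v_0,v_0)\neq 0$, the assignment $\phi(v)(x):=v\cdot x\cdot v_0$ defines a $\QQ$-linear injection $\phi\colon V\hookrightarrow\mathrm{End}_\QQ\big(C^+(V)\big)$ — well defined since $v\,x\,v_0\in C^+$ whenever $x\in C^+$, and injective because $C(V)$ is semisimple and $v_0$ is a unit. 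The crucial point is that $\phi$ respects Hodge types: lifting the Hodge cocharacter $h\colon\mathbb{S}\to SO(V_\RR)$ to $\widetilde h\colon\mathbb{S}\to\mathrm{GSpin}(V_\RR)\subset C^+(V_\RR)^\times$, one checks that conjugation by left multiplication by $\widetilde h(z)$ on $\mathrm{End}(C^+(V_\RR))$ restricts on $\phi(V)$ to the adjoint action of $\mathrm{GSpin}$, which is precisely the $SO$-action defining the Hodge structure on $V$; comparing the $\mathbb{S}$-actions, $\phi$ becomes a morphism of Hodge structures $V\hookrightarrow\mathrm{End}\big(H_1(A)\big)(-1)$.

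Finally, the polarization on $A$ identifies $\mathrm{End}\big(H_1(A)\big)(-1)\cong H^1(A,\QQ)\otimes H^1(A,\QQ)$, which is a Künneth direct summand of $H^2(A\times A,\QQ)$; composing with $\phi$ yields the desired embedding of Hodge structures $\iota_V\colon V\hookrightarrow H^2(KS(V)\times KS(V),\QQ)$. I expect the main obstacle to be the second step: constructing the polarization (existence of a suitable $\delta$, i.e.\ Deligne's lemma) and, more delicately, verifying the compatibility of Clifford multiplication with the Hodge filtration, including the bookkeeping of the Tate twist so that the image lands in exactly the weight-$2$ piece $H^1\otimes H^1$ of $H^2(A\times A)$. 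The remaining ingredients — the structure theory of Clifford algebras, the identity $J^2=-1$, and the Künneth/Poincaré-duality identification — are formal.
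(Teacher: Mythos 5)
Your proposal is the classical Clifford-algebra construction of Kuga and Satake, and its steps are all correct: the complex structure on $C^+(V_\RR)$ via left multiplication by $J=f_1f_2$ with $J^2=-1$, the polarization via Deligne's trace-form lemma, the embedding $v\mapsto (x\mapsto v\,x\,v_0)$ into $\mathrm{End}\bigl(C^+(V)\bigr)$ compatible with the lift of the Hodge cocharacter to $\mathrm{GSpin}$, and the identification of $\mathrm{End}\bigl(H_1(A)\bigr)(-1)$ with the K\"unneth summand $H^1(A,\QQ)\otimes H^1(A,\QQ)$ of $H^2(A\times A,\QQ)$. The paper does not reprove this classical theorem but simply cites Huybrechts, van Geemen and the original Kuga--Satake paper, all of which establish it by exactly the argument you describe, so your proof follows essentially the same route as the (cited) one.
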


\begin{proof} This is explained in \cite[Chapter 4]{Huy} and \cite{vG}. The original reference is \cite{KS}.
\end{proof}

The following is a special case of the Hodge conjecture:

\begin{conjecture}(``Kuga--Satake Hodge conjecture'')\label{kshc} Let $V=H^{2r}(X,\QQ)$ be a Hodge structure of K3 type, for some smooth projective variety $X$. The embedding $\iota_V$ is induced by a correspondence in $A^\ast(X\times KS(V)\times KS(V))$.
\end{conjecture}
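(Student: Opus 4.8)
The plan is to recognize that the statement is a special case of the Hodge conjecture and to attack it by the transcendental-to-algebraic reduction that underlies all the known cases, while being candid that the assertion is open in full generality. Via Künneth and Poincaré duality on $X\times KS(V)\times KS(V)$, the morphism of Hodge structures $\iota_V$ is the same datum as a single rational Hodge class in $H^{2\dim X-2r}(X,\QQ)\otimes H^2(KS(V)\times KS(V),\QQ)$, so the content of Conjecture \ref{kshc} is exactly that this Hodge class is algebraic. First I would decompose $V$ into its transcendental part $T(V)$ (the smallest sub-Hodge-structure with $V^{r+1,r-1}\subset T(V)\otimes\C$) and a complementary algebraic part. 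On the complementary part, $\iota_V$ sends Hodge classes to Hodge classes and the problem reduces to the Hodge conjecture for $X$ in degree $2r$ — automatic when $r=1$ by Lefschetz $(1,1)$ — so the whole difficulty concentrates in $T(V)$.

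For $T(V)$ I would try to reduce to a situation where the Kuga--Satake construction is \emph{geometrically} understood. Concretely, one realizes $T(V)$ as a summand of the transcendental motive of a variety for which the Kuga--Satake Hodge conjecture is already known: abelian varieties, Kummer and high-Picard-rank K3 surfaces, or, as exploited in this paper, cyclic cubic fourfolds, where van Geemen--Izadi \cite{vGI} produce the Kuga--Satake correspondence by an algebraic cycle. If $X$ is linked to such a model by an algebraic correspondence compatible with the polarized Hodge structures and with the even Clifford construction, the cycle inducing $\iota$ transports along that correspondence, proving the conjecture for $V$. In families this can be reinforced by a spread argument in the spirit of Lemma \ref{spread}, combined with Deligne's theorem that the Kuga--Satake class is absolutely Hodge and André's refinement that it is \emph{motivated}: these pin the class down uniformly and reduce algebraicity to a single fibre.

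The hard part --- and the reason this remains a conjecture rather than a theorem --- is the final algebraization step. The variety $KS(V)$ is built transcendentally from the even Clifford algebra of $(V,\psi)$ and the embedding $\iota_V$ is manufactured from the spin representation; there is no known algebraic-geometric construction producing $KS(V)$ together with a cycle realizing $\iota_V$ for an \emph{arbitrary} K3-type Hodge structure. Hence the reductions above succeed only when $V$ is tethered to one of the few geometric sources where the correspondence has already been algebraized, and the genuinely new input for a general pair $(X,r)$ --- an explicit algebraic cycle inducing the spin embedding --- is precisely what is missing. In the present paper this obstacle is bypassed rather than removed: the relevant $V=H^2(S,\QQ)$ is routed through cyclic cubic fourfolds via Theorem \ref{bhs}, where \cite{vGI} supplies the needed algebraicity, yielding Theorem \ref{main2} as a special case, while the general Conjecture \ref{kshc} stays open.
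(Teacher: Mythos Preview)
The statement you were asked to prove is labeled in the paper as a \emph{Conjecture}, and the paper does not supply a proof of it in general; it only establishes the special case $V=H^2(S,\QQ)$ for the particular K3 surfaces of Theorem~\ref{main2}. You correctly recognized this and were appropriately candid that the assertion is open in full generality, so there is no ``paper's own proof'' to compare against.

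Your discussion of the strategy is accurate and matches how the paper proceeds in the special case: you identify the statement as an instance of the Hodge conjecture via K\"unneth/Poincar\'e duality, isolate the transcendental part as the essential difficulty, and propose transporting the Kuga--Satake correspondence along algebraic correspondences from a model where it is already known to be algebraic. This is exactly the mechanism behind Theorem~\ref{main2}: the transcendental lattice of $S$ is linked, through $S^{[2]}\cong \wt{F}(Y)$ and the Abel--Jacobi isomorphism, to that of a cyclic cubic fourfold, where van Geemen--Izadi \cite{vGI} supply the algebraic cycle; the spread argument and the family version of Kuga--Satake then propagate this to all fibers. Your invocation of Deligne's absolute-Hodge theorem and Andr\'e's motivated-class refinement is a reasonable embellishment, though the paper does not need or use these, relying instead on the concrete generically-defined correspondences coming from \cite{vGI} and \cite{cubic}. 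In short, your proposal is not a proof of the conjecture---nor could it be---but it is a correct diagnosis of its status and of the method the paper uses for the cases it does settle.
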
 

\begin{remark} Conjecture \ref{kshc} is widely open for K3 surfaces (i.e. when $V=H^2(S,\QQ)$ for $S$ a K3 surface). It is known for $S$ a (resolution of a) double plane branched along 6 lines \cite{Par0}, for $S$ a (resolution of a) complete intersection in $\PP^4$ with 15 double points \cite{ILP}, for $S$ a cyclic quartic in $\PP^3$ \cite{vG}, and for some other K3 surfaces with Picard number 16 \cite{Flo}.
\end{remark}

\begin{theorem}\label{main2} Let $S\subset\PP^4$ be a K3 surface defined as a smooth complete intersection with equations
  \[ \begin{cases}  &f(x_0,\ldots,x_3)=0\\
                            &g(x_0,\ldots,x_3)+ x_4^3=0\ ,\\
                            \end{cases}\]
                       where $f$ and $g$ are homogeneous polynomials of degree 2 resp. 3.
                       
      Then Conjecture \ref{kshc} is true for $V=H^2(S,\QQ)$.                 
\end{theorem}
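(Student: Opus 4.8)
The proof of Theorem~\ref{main2} will follow the same template as the proof of Theorem~\ref{main}, transporting information from cyclic cubic fourfolds to $S$ via the Fano variety of lines $F$ and its desingularization $\wt F \cong S^{[2]}$. The key input is that the Kuga--Satake Hodge conjecture is known for cyclic cubic fourfolds by van Geemen--Izadi \cite{vGI}: for a smooth cyclic cubic fourfold $Y_b$ ($b\ne 0$), the Kuga--Satake embedding $H^4_{prim}(Y_b,\QQ)\hookrightarrow H^2(KS\times KS,\QQ)$ is induced by an algebraic cycle on $Y_b\times KS\times KS$, and moreover (inspecting their construction, which goes through the cubic fivefold $Z_b$ and the elliptic curve $E$ exactly as in Theorem~\ref{cyclic}) this cycle is \emph{generically defined} over $B^\circ = B\setminus\{0\}$.

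**Key steps, in order.** First I would record that the transcendental part of $H^2(S,\QQ)$ is, up to Tate twist and Lefschetz summands, a Hodge substructure of $H^4(F,\QQ)$: via the incidence correspondence $\Gamma\subset S\times S^{[2]}$ of Theorem~\ref{mck} and the isomorphism $\wt F\cong S^{[2]}$ of Theorem~\ref{bhs}, together with the blow-up formula $H^4(\wt F)\cong H^4(F)\oplus H^2(S)$, one gets an algebraically-induced inclusion $H^2_{tr}(S,\QQ)\hookrightarrow H^4(\wt F,\QQ)$ and hence a Hodge-theoretic relation between the Kuga--Satake variety of $S$ and that of $F$ (equivalently of $Y$). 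Second, I would run the spread argument of the proof of Proposition~\ref{corrida} in cohomology/with the Kuga--Satake cycle in place of the projector: starting from the family $\YY\to B$ of cyclic cubic fourfolds with $Y=Y_0$, the families $\FF$, $\ZZ$, $\wt\ZZ$, $\wt\FF$, $\GG$, $\wt\GG$ already constructed, and the generically-defined correspondences $\wt\Gamma^\circ,\wt\Psi^\circ$, I would take a generically-defined Kuga--Satake cycle $\Theta^\circ$ on $\GG\times_{B^\circ}(KS\times KS)$ (which exists by \cite{vGI}), extend everything over $0\in B$, and specialize. The spread lemma (Lemma~\ref{spread}) guarantees that the relation $(\wt\Psi\circ\wt\Gamma)|_0$ inducing the identity on $A^2_{hom}(\wt F_0)$, combined with the correcting term $R$ which acts as zero on $A^2_{hom}(\wt F_0)$ (Proposition~\ref{chowchow}), persists; applied to the cycle-theoretic realization of the Kuga--Satake map rather than a projector, this produces an algebraic cycle on $\wt F_0\times KS\times KS$ inducing the Kuga--Satake embedding for the relevant Hodge summand. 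Third, I would pull this cycle back along $\Gamma^t\subset S^{[2]}\times S$ and along $p_0\colon\wt F_0\to F_0$ (again using Proposition~\ref{chowchow} to move between $F_0$ and $\wt F_0$) to obtain a cycle on $S\times KS(H^2(S))\times KS(H^2(S))$ inducing $\iota_{H^2(S,\QQ)}$, after identifying $KS(H^2(S))$ with the Kuga--Satake variety appearing on the cubic-fourfold side up to isogeny (which is harmless for the Hodge conjecture statement). Finally, Corollary~\ref{spreadkim}-style descent is not available here since the Kuga--Satake conjecture is not obviously spreadable in families in the naive sense, so I would instead build the cycle directly for general $(f,g)$ via the spread and then argue that the locus where Conjecture~\ref{kshc} holds is a countable union of closed subsets containing the general point, hence everything.

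**Main obstacle.** The delicate point is bookkeeping the Kuga--Satake varieties: van Geemen--Izadi produce $KS$ for the cubic fourfold via $H^4_{prim}(Y)\cong H^2_{tr}(F(Y))(1)$ (and then through $Z\times E$), whereas we want $KS$ for $H^2(S,\QQ)$ directly, and these agree only up to adding Lefschetz summands and up to isogeny of the abelian varieties; one must check that the algebraic cycle survives these manipulations, i.e. that an isogeny and a direct-sum decomposition of the polarized Hodge structure are realized by correspondences (the former always, the latter requires the extra summand to be Tate, which it is). Equally delicate, and really the crux, is verifying that the van Geemen--Izadi cycle is \emph{generically defined} over $B^\circ$ with the same structure used in Theorem~\ref{cyclic}; this should follow by inspection of their construction, since it is built from the cubic fivefold $Z_b$ and a fixed elliptic curve $E$ by explicit algebraic correspondences, but it is the step that must be checked carefully rather than quoted.
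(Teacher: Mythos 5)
Your proposal follows the same overall strategy as the paper -- degenerate a family of smooth cyclic cubic fourfolds to the cuspidal one, use the fact that van Geemen--Izadi's Kuga--Satake correspondence for smooth cyclic cubic fourfolds is generically defined, and transport the result through $\wt F\cong S^{[2]}$ down to $S$ -- but your execution of the specialization step is genuinely heavier than the paper's, and in one place needs an extra check. The paper's proof stays entirely at the level of cohomology: it first invokes the ``Kuga--Satake in family'' result \cite[Proposition 6.4.10]{Huy} to obtain, after a finite base change $B'\to B$, an abelian scheme of Kuga--Satake varieties (this is what makes a \emph{relative} Kuga--Satake correspondence meaningful -- a point your write-up leaves implicit by treating $KS\times KS$ as a fixed variety); it then transports the generically defined van Geemen--Izadi cycle along the chain $H^4_{tr}(Y_b)\cong H^2_{tr}(F_b)\cong H^2_{tr}(\wt F_b)$ (Abel--Jacobi plus blow-up formula), extends the resulting relative correspondence over the central fiber, and concludes for $\wt F_0\cong S_0^{[2]}$ and then for $S_0$ via $H^2_{tr}(S_0)\cong H^2_{tr}(S_0^{[2]})$. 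No correction term $R$ and no appeal to Proposition \ref{chowchow} are needed, because the statement to be specialized is cohomological: the class of the extended relative cycle is a flat section of the relevant local system over $B'$, so the identity it induces over $B'_\circ$ persists at $0$.

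Two caveats about your route. First, if you insist on reusing the decomposition $\Delta_{\wt F_0}=\wt\Psi|_0\circ\wt\Gamma|_0+R$ from Proposition \ref{corrida}, the property you have of $R$ is that $(p_0\times p_0)_\ast R=0$, which was exploited to kill $R_\ast$ on $A^2_{hom}(\wt F_0)$; for the Kuga--Satake statement you instead need $R_\ast$ to vanish on $H^2_{tr}(\wt F_0)$ (or at least that the surviving part of $\wt\Psi|_0\circ\wt\Gamma|_0$ still realizes the embedding there). This is plausible since $H^2_{tr}(\wt F_0)$ comes from $\Gr^W_2H^2(F_0)$ via $p_0^\ast$, but it is a separate argument that you have not supplied, and the paper simply avoids it. Second, your first step embeds $H^2_{tr}(S)$ into $H^4(\wt F)$ via the exceptional-divisor summand of the blow-up; the cleaner (and the paper's) identification is $H^2_{tr}(S)\cong H^2_{tr}(S^{[2]})=H^2_{tr}(\wt F)$, which avoids having to manipulate $H^4$ of the singular $F$. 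Your closing remark correctly identifies the crux (generic definedness of the van Geemen--Izadi cycle, and bookkeeping of Kuga--Satake varieties up to isogeny and Tate summands), and the countable-union argument you sketch for passing from the very general to all members is the standard one and is sound.
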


\begin{proof} The Kuga--Satake construction can be performed in a family. Precisely, let $V\to B$ be a polarized variation of Hodge structure of K3 type. Then it is known \cite[Proposition 6.4.10]{Huy} that there exists a finite \'etale base-change $B^\prime\to B$ and an abelian scheme $A\to B^\prime$ such that the fiber $A_b$ over $b\in B^\prime$ is the Kuga--Satake variety associated to the Hodge structure $V_b$. Applying this to the family of smooth complete intersections as in Theorem \ref{main2}, we see that it suffices to prove that for a {\em generic\/} K3 surface $S$ as in Theorem \ref{main2}, there exists a correspondence that is {\em generically defined\/} (with respect to the base change $B^\prime$) inducing the Kuga--Satake embedding.

So let us now assume that $S=S_0$ is a generic member of the family of complete intersections as in Theorem \ref{main2}. Then Theorem \ref{bhs} applies, and gives us a 1-dimensional family $\YY\to B$ of cyclic cubic fourfolds, such that $Y_b$ is smooth for $b\not=0$ and $Y_0$ is such that the Fano variety of lines $F(Y_0)$ has a resolution of singularities isomorphic to the Hilbert scheme $S_0^{[2]}$. As above, let us write $\FF\to B$ for the family of Fano varieties of lines, and 
  \[  \pi\ \colon\ \wt{\FF}\to B\] for the blow-up of $\FF$ with center the fixed locus of the order 3 fiberwise automorphism (so that $\FF_0$ is isomorphic to $S_0^{[2]}$, cf. Theorem \ref{bhs}). 
  
  The local system $R^2\pi_\ast \QQ\to B$ is a variation of Hodge structure of K3 type, and so the above-mentioned ``Kuga--Satake in family'' result \cite[Proposition 6.4.10]{Huy} applies.
  That is, there exists a finite base-change $B^\prime \to B$ and a family $A\to B^\prime$ of associated Kuga--Satake varieties. Let $B^\prime_\circ\subset B^\prime$ denote the pre-image of $B^\circ=B\setminus \{0\}$. Then for any $b\in B^\prime_\circ$, the fiber $\wt{F}_b$ of $\wt{\FF}\to B^\prime$ over $b$ is a blow-up of the Fano variety of lines of a smooth cyclic cubic fourfold. A lovely result of van Geemen--Izadi \cite[Corollary 5.3]{vGI} settles the Kuga--Satake Hodge conjecture for the smooth cyclic cubic fourfold $Y_b$. Moreover, inspection of their proof (cf. \cite[Theorem 2.8]{cubic}) reveals that the correspondence they produce is actually generically defined (with respect to $B$). In view of the isomorphisms
   \[ H^4_{tr}(Y_b,\QQ)_{}\cong H^2_{tr}(F_b,\QQ)_{}\cong H^2_{tr}(\wt{F_b},\QQ)_{} \]
(where the first isomorphism is given by the Abel--Jacobi map, and the second follows from the blow-up formula), this gives a generically defined Kuga--Satake correspondence for $\wt{F_b}$, $b\in B^\prime_\circ$. Extending this relative correspondence to $B^\prime$, we obtain the Kuga--Satake Hodge conjecture for $\wt{F_0}\cong S_0^{[2]}$. Because of the natural isomorphism (induced by a generically defined correspondence)
     \[ H^2_{tr}(S_0,\QQ)\cong H^2_{tr}(S_0^{[2]},\QQ)\ ,\] 
     this also gives a (generically defined) Kuga--Satake correspondence for $S_0$. This closes the proof.
\end{proof}

\begin{remark}\label{KSChow} A general Hilbert schemes argument due to Voisin implies that the inverse to the Kuga--Satake correspondence is also induced by a generically defined correspondence (cf. \cite[2.3]{cubic}). The above argument then gives the following: for any K3 surface $S$ as in Theorem \ref{main}, with Kuga--Satake variety $A$, there exist correspondences $\Gamma,\Psi$ such that the composition
  \[ H^2(S,\QQ)\ \xrightarrow{\Gamma_\ast}\ H^2(A\times A,\QQ)\ \xrightarrow{\Psi_\ast}\ H^2(S,\QQ) \]
  is the identity. Since we know that $S$ is Kimura finite-dimensional (Theorem \ref{main}), it follows that there is an embedding of Chow motives
  \[  \Gamma\ \colon\ h^2_{tr}(S)\ \hookrightarrow\ h^2(A^2)\ \ \ \hbox{in}\ \MM_{\rm rat}\ \]
  (where $h^\ast(A^2)$ refers to the Deninger--Murre decomposition of the motive \cite{DM}),
  and so in particular there is an embedding of Chow groups
  \[ \Gamma_\ast\colon\ \ A^2_{hom}(S)\ \hookrightarrow\ A^2_{(2)}(A^2)\ \]
  (where $A^\ast_{(\ast)}(A^2)$ refers to the Beauville decomposition of the Chow ring \cite{Beau}).
\end{remark}

\section{Voisin's conjecture}

\begin{conjecture}[Voisin \cite{Vo0}]\label{vo} Let $X$ be a strict Calabi--Yau variety of dimension $n$ (i.e. $h^{i,0}(X)$ is 0 for $0<i<n$ and $h^{n,0}(X)=1$). Then for any 2 zero-cycles $a,b\in A^n_{hom}(X)$ there is equality
  \[ a\times b + (-1)^{n-1} b\times a=0\ \ \ \hbox{in}\ A^{2n}(X\times X)\ .\]
  (Here the notation $a\times b$ is by definition $\pi_1^\ast(a)\cdot \pi_2^\ast(b)$, where $\pi_i\colon X\times X\to X$ are the projections on the two factors.)
  \end{conjecture}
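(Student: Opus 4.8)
This is Voisin's conjecture, which is wide open in the generality stated; accordingly the plan is not a complete proof but a strategy, together with an honest account of where it breaks down. The first step is to recast the statement motivically. For any $a,b\in A^n_{hom}(X)$ the K\"unneth class of $a\times b$ in $H^{2n}(X\times X)$ is $[a]\times[b]=0$, so the cycle $a\times b+(-1)^{n-1}b\times a$ is \emph{automatically} homologically trivial; the content of the conjecture is thus a statement about rational equivalence inside a homologically trivial Chow group. A short Koszul-sign computation locates this class. Writing $\iota\colon X\times X\to X\times X$ for the swap, one has $b\times a=\iota^\ast(a\times b)$, and on the summand $h^n(X)\otimes h^n(X)$ of $h(X\times X)$ the geometric swap $\iota$ acts as $(-1)^n$ times the categorical braiding $c$. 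Hence $(-1)^{n-1}\iota^\ast=-c$, so $a\times b+(-1)^{n-1}b\times a=(1-c)(a\times b)$ is twice the projection of $a\times b$ onto $\wedge^2 h^n(X)$. Since $a,b$ are homologically trivial, it suffices to treat their transcendental parts, and the conjecture becomes the vanishing of the image of $a\otimes b$ in $\operatorname{Hom}_{\MM_{\rm rat}}\!\bigl(\one,\wedge^2 h^n_{tr}(X)(2n)\bigr)$. This is exactly the ``deepest graded piece'' that the Bloch--Beilinson philosophy predicts to vanish.

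Second, I would try to exploit finite-dimensionality. The strict Calabi--Yau hypothesis makes $h^n_{tr}(X)$ a motive with one-dimensional top Hodge piece $H^{n,0}$, conjecturally oddly finite-dimensional for $n$ odd and evenly finite-dimensional for $n$ even. The plan is: (i) establish Kimura finite-dimensionality of $h(X)$; (ii) use the nilpotence theorem of Kimura to upgrade a cohomological or numerical vanishing of a self-correspondence to vanishing modulo rational equivalence; and (iii) realize the antisymmetrizer $\tfrac12(1-c)$ applied to the product as such a correspondence. Concretely, following the method used throughout this paper, I would spread $X$ in a family $\XX\to B$, write the relevant correspondence relatively, and invoke the spread argument (Lemma \ref{spread}) to propagate a vanishing known on the very general fibre to every fibre.

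Third, in the one weight where the method is known to close --- weight two, $n=2$, $X$ a K3 surface --- the missing input is supplied by an \emph{algebraic} Kuga--Satake correspondence. When such a correspondence exists (as established here in Theorem \ref{main2} and Remark \ref{KSChow}), it embeds $A^2_{hom}(S)$ into the Beauville-graded piece $A^2_{(2)}(A\times A)$ of an abelian variety, where the bigraded ring structure forces $a\times b=b\times a$, and finite-dimensionality then transports the equality back to $S$. This is precisely the route of Theorem \ref{main3}, and it is the template I would imitate in general: replace $h^n_{tr}(X)$ by a piece of the motive of an abelian variety, where the antisymmetrization of two zero-cycles in the relevant graded piece is provably trivial, and then descend along the correspondence.

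The hard part --- indeed the reason Conjecture \ref{vo} is open --- is that this template has no known substitute for $n>2$. Kimura finite-dimensionality is itself unproved for general Calabi--Yau $n$-folds, so step (i) already stalls. More fundamentally, even granting finite-dimensionality it does \emph{not} force the class in $\wedge^2 h^n_{tr}(X)$ to vanish: evenly finite-dimensionality yields $\wedge^N h^n_{tr}(X)=0$ only for $N\gg 0$, never for $N=2$, while oddly finite-dimensionality controls $\sym^\ast$ rather than $\wedge^\ast$. One genuinely needs an auxiliary abelian-type structure --- a Kuga--Satake partner, or Abel--Jacobi triviality of the pertinent Chow groups --- inside which the antisymmetrized product of two zero-cycles is provably zero, and for weight $\ge 3$ no such universal structure is available (there is no Kuga--Satake construction outside K3 type). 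Consequently any realistic version of this proposal can settle the conjecture only for special classes of Calabi--Yau varieties possessing an abelian-geometry shadow; the present paper carries this out for the K3 surfaces of Theorem \ref{main}, and the general case is left untouched.
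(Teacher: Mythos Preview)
The statement you were asked to prove is a \emph{conjecture}, not a theorem: the paper does not offer a proof of Conjecture~\ref{vo} in its stated generality, nor does it claim to. It merely records the conjecture, notes in the subsequent remark that it is open even for K3 surfaces, and then verifies it for the specific family of K3 surfaces at hand (Theorem~\ref{main3}) via Proposition~\ref{crit}. You recognized this immediately and wrote accordingly, so there is no discrepancy to flag.

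Your strategic outline is in fact well aligned with the paper's treatment of the special case. The route you sketch in your third paragraph---embed $h^n_{tr}(X)$ into an abelian motive via an algebraic Kuga--Satake correspondence, then use the bigraded structure on Chow groups of abelian varieties together with Kimura finite-dimensionality to transport the vanishing back---is exactly the content of Proposition~\ref{crit} and Remark~\ref{KSChow}, which together constitute the proof of Theorem~\ref{main3}. Your honest diagnosis of why this fails beyond K3 type (no Kuga--Satake construction in higher weight, and finite-dimensionality alone not forcing $\wedge^2 h^n_{tr}=0$) is also accurate. In short: there is nothing to correct; the paper and you agree that the general statement is open, and your description of the method that works in the K3 case matches the paper's.
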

  
  \begin{remark} For background and motivation for Voisin's conjecture, cf. \cite[Section 4.3.5.2]{Vo}. This conjecture is still open for K3 surfaces. For some special cases where Conjecture \ref{vo} is known, cf. \cite{Vo0}, \cite{18}, \cite{19}, \cite{20}, \cite{21}, \cite{38}, \cite{47}.
    \end{remark}
    
  \begin{theorem}\label{main3} Let $S\subset\PP^4$ be a K3 surface defined as a smooth complete intersection with equations
  \[ \begin{cases}  &f(x_0,\ldots,x_3)=0\\
                            &g(x_0,\ldots,x_3)+ x_4^3=0\ ,\\
                            \end{cases}\]
                       where $f$ and $g$ are homogeneous polynomials of degree 2 resp. 3.
                       
      Then Conjecture \ref{vo} is true for $S$.               
  \end{theorem}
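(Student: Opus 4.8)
The plan is to feed the two results just obtained into the mechanism that produces Voisin's conjecture in all previously known cases (cf. \cite{Vo0}, \cite{18}): Theorem \ref{main} supplies finite-dimensionality of $h(S)$, while Theorem \ref{main2} in the refined form of Remark \ref{KSChow} supplies correspondences realizing a split injection of Chow motives $h^2_{tr}(S)\hookrightarrow h^2(A\times A)$, where $A$ is the Kuga--Satake variety. Thus $h^2_{tr}(S)$ is a direct summand of the motive of the abelian variety $B:=A\times A$, and $A^2_{hom}(S)=A^2\bigl(h^2_{tr}(S)\bigr)$ injects, with a one-sided inverse, into $A^2_{(2)}(B)$ (notation of \cite{Beau}).

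First I would record the motivic shape of the statement. For a K3 surface every Chow--K\"unneth component $(\pi^i_S)_\ast a$ of a homologically trivial $0$-cycle $a$ vanishes except the one attached to $h^2_{tr}(S)$; hence $a\times b$ --- and therefore $a\times b-b\times a$ --- lies, via the split inclusion $h^2_{tr}(S)^{\otimes 2}\hookrightarrow h(S\times S)$ (which is injective on Chow groups), in the image of $A^4\bigl(h^2_{tr}(S)\otimes h^2_{tr}(S)\bigr)$, and, being anti-invariant under the involution exchanging the two copies of $S$, in the image of $A^4\bigl(\wedge^2 h^2_{tr}(S)\bigr)$. Applying $\wedge^2$ to the split injection $h^2_{tr}(S)\hookrightarrow h^2(B)$ and using injectivity on $A^4$, the conjecture for $S$ reduces to the equality
\[ \alpha\times\beta=\beta\times\alpha\quad\text{in } A^4(B\times B),\qquad \alpha,\beta\in A^2_{(2)}(B),\]
where $\alpha,\beta$ are the images of $a,b$.

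Next I would analyse this equality on the abelian variety $B$. By multiplicativity of the Beauville decomposition the difference $\alpha\times\beta-\beta\times\alpha$ lies in $A^4_{(4)}(B\times B)$; by the plethysm $\wedge^2\bigl(\wedge^2 h^1(B)\bigr)\cong \mathbb S_{(2,1,1)}h^1(B)$ it lies in the part of $A^4_{(4)}(B\times B)$ cut out by the summand $\wedge^2 h^2(B)=\mathbb S_{(2,1,1)}h^1(B)$ of $h^4(B\times B)$; and since $\alpha,\beta$ factor through the $K3$-type sub-Hodge structure $H^2_{tr}(S)\subset H^2(B)$, the difference in fact lies in $A^4(M)$ for a submotive $M\subseteq\wedge^2 h^2(B)$ with $H^{4,0}(M)=0$ (because $\wedge^2$ of a one-dimensional space vanishes), equivalently of Hodge coniveau $\ge 1$. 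I would then invoke the vanishing of $A^4(M)$ for such an $M$ --- a finite-dimensional motive of abelian type, of weight $4$, carrying no holomorphic $4$-form: this is the pertinent instance of the generalized Bloch conjecture, which for abelian (hence finite-dimensional, abelian-type) motives is a theorem, obtained by combining Beauville's Fourier-transform calculus \cite{Beau} with Kimura finite-dimensionality \cite{Kim} (in the spirit of the passage ``$p_g=0\Rightarrow A^2_{hom}=0$'' of \cite{GP} and of \cite{43}). This yields $\alpha\times\beta=\beta\times\alpha$, hence $a\times b=b\times a$.

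The hard part will be exactly this last step --- isolating the submotive $M$ and proving $A^4(M)=0$. Everything preceding it is formal bookkeeping with the Beauville and Chow--K\"unneth gradings together with the splittings furnished by Theorems \ref{main} and \ref{main2}; the genuine input, and the only place Kimura finite-dimensionality is used beyond the mere existence of an algebraic Kuga--Satake correspondence, is this generalized Bloch-type vanishing on the abelian side. Alternatively one could avoid $B$ altogether and argue directly in the manner of Voisin \cite{Vo0}: spread the cycle $a\times b-b\times a$ out over a power of $S$, and combine Lemma \ref{spread} with finite-dimensionality of $S\times S$ and the vanishing of the relevant holomorphic forms; the essential difficulty is the same.
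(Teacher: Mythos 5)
Your overall route is the same as the paper's: combine Theorem \ref{main} with the refined Kuga--Satake statement of Remark \ref{KSChow} to realize $h^2_{tr}(S)$ as a direct summand of $h^2(A\times A)$, and thereby transplant the identity $a\times b=b\times a$ into an identity between cycles in $A^2_{(2)}$ of an abelian variety. At that point the paper simply stops and invokes Proposition \ref{crit} (that is, \cite[Lemma 2.1]{47}) as a black box; that lemma is precisely the statement your second and third paragraphs set out to reprove. The gap is in your third paragraph. The assertion that the needed vanishing $A^4(M)=0$ --- for $M$ a weight-$4$ summand of the motive of an abelian variety with $H^{4,0}(M)=0$ --- is ``the pertinent instance of the generalized Bloch conjecture, which for abelian motives is a theorem'' is not correct: this is an open conjecture, not a theorem. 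What Kimura finite-dimensionality gives you is the implication $H^\ast(M)=0\Rightarrow M=0$ (this is how one proves Bloch's conjecture for finite-dimensional surfaces with $p_g=q=0$, where the \emph{entire} transcendental cohomology vanishes). Your $M\subseteq\wedge^2 h^2(B)$ has abundant non-zero cohomology in bidegrees $(3,1)$ and $(2,2)$; only its $(4,0)$-part vanishes. To get $A^4(M)=0$ from Hodge coniveau $\geq 1$ you would need the generalized Hodge conjecture for powers of the Kuga--Satake variety (wide open for abelian varieties) together with a Bloch--Beilinson-type vanishing for the resulting niveau-$1$ motive, and neither follows from \cite{Kim} combined with \cite{Beau}. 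Indeed, if the statement you invoke were a theorem, it would at a stroke give Voisin's conjecture for \emph{every} K3 surface with finite-dimensional motive and algebraic Kuga--Satake correspondence, as well as Beauville's vanishing conjecture $A^p_{(s)}=0$ for $s<0$; none of this is known.

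What makes \cite[Lemma 2.1]{47} work is not a coniveau argument but the special structure of the deepest graded piece of the Beauville decomposition: the cycles in play lie in $A^n(h^n(A))=A^n_{(n)}(A)$, a group with explicit generators coming from Beauville's Fourier-transform calculus, on which the required (anti)symmetry of $a\times b+(-1)^{n-1}\,b\times a$ is verified by direct computation, in the spirit of Voisin's own treatment of abelian varieties in \cite{Vo0}. So your first paragraph (the bookkeeping reducing everything to the abelian side) is correct and amounts to checking the hypothesis of Proposition \ref{crit}; your identification of where the real difficulty sits is also correct; but the justification you offer for that step appeals to an open conjecture, so the proof as written is incomplete. (Your closing alternative --- spreading $a\times b-b\times a$ over a family and using Lemma \ref{spread} --- runs into the same wall: the spread argument needs the vanishing on the very general fiber as input, which is exactly what is at stake.) The repair is either to cite \cite[Lemma 2.1]{47} as the paper does, or to reproduce its explicit computation in $A^n_{(n)}$ rather than a Bloch-type coniveau vanishing.
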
  
  
  \begin{proof} We apply the following general criterion:
  
  \begin{proposition}[\cite{47}\label{crit}] Let $X$ be a strict Calabi--Yau variety of dimension $n$. Assume that there is an isomorphism of Chow motives
    \[ h(X)\cong \bigoplus_\chi h^n(A_\chi)\oplus \bigoplus \one(\ast)\ \ \hbox{in}\ \MM_{\rm rat}\ ,\]
    where $A_\chi$ are abelian varieties, and $h^\ast(A_\chi)$ refers to the Deninger--Murre decomposition \cite{DM}. Then Conjecture \ref{vo} is true for $X$.
   \end{proposition}
(This is \cite[Lemma 2.1]{47}.) This criterion applies to $S$ in view of the results we established, cf. Remark \ref{KSChow} above.
   \end{proof}

\section{Further consequences, and closing remarks}\label{final}

We start this section by proving the consequence announced in the introduction:

\begin{corollary}\label{cor} Let $S$ be a K3 surface with an order 3 non-symplectic automorphism $\sigma$, and assume that the fixed locus of $\sigma$ contains a curve. Then $S$ has finite-dimensional motive.
\end{corollary}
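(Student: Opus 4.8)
The plan is to deduce the corollary from Theorem~\ref{main} by combining the Artebani--Sarti classification (Theorem~\ref{as}) with the spreading principle of Corollary~\ref{spreadkim}. First I would note that, by Theorem~\ref{as}, $S$ lies on one of the three irreducible components $\overline{\MM_{0,1}}$, $\overline{\MM_{0,2}}$, $\overline{\MM_{3,0}}$. The topological type of the fixed locus $S^\sigma$ is a deformation invariant --- it is controlled by the (locally constant) action of $\sigma$ on the K3 lattice through the holomorphic and topological Lefschetz fixed point formulas --- hence is constant on each component; since a general, and therefore every, member of $\overline{\MM_{3,0}}$ has fixed locus consisting of three isolated points, the hypothesis that $S^\sigma$ contains a curve forces $S\in\overline{\MM_{0,1}}\cup\overline{\MM_{0,2}}$. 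As K3 surfaces have $q=0$, finite-dimensionality of their motive coincides with even finite-dimensionality, so Corollary~\ref{spreadkim} applies to any smooth projective family of K3 surfaces: it is therefore enough to establish finite-dimensionality for a very general member of each of the two components $\overline{\MM_{0,1}}$ and $\overline{\MM_{0,2}}$.

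For $\overline{\MM_{0,1}}$ this is essentially immediate. By \cite[Proposition~4.7]{AS} every surface in $\MM_{0,1}$ is a smooth complete intersection in $\PP^4$ of the form treated in Theorem~\ref{main}, so that theorem already yields finite-dimensionality for all of $\MM_{0,1}$. To reach the boundary, I would fix a smooth projective family $\pi\colon\mathcal{S}\to B$ of K3 surfaces with $B$ smooth and $B\to\overline{\MM_{0,1}}$ dominant (such a family exists after passing to a level cover of the moduli space); its very general fibre lies in the dense locus $\MM_{0,1}$ and is hence finite-dimensional, so Corollary~\ref{spreadkim} gives finite-dimensionality for every fibre, in particular for $S$.

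For $\overline{\MM_{0,2}}$ I would invoke Kondo's description (Proposition~\ref{kon}): the general member $S''\in\MM_{0,2}$ is the relative Jacobian of an elliptic fibration carried by some $S'\in\MM_{0,1}$. The relative Jacobian is a two-dimensional moduli space of sheaves on $S'$, so (after a suitable choice of multisection, possibly at the cost of a Brauer twist) $S''$ is a Fourier--Mukai partner of $S'$, and the Chern character of the Fourier--Mukai kernel furnishes an algebraic correspondence realizing an isomorphism of transcendental Chow motives $h^2_{tr}(S')\cong h^2_{tr}(S'')$. Since $S'$ is finite-dimensional by the previous paragraph, so is $h^2_{tr}(S')$, whence $h^2_{tr}(S'')$ is a direct summand of a finite-dimensional motive and is finite-dimensional; as $q(S'')=0$, this means $S''$ has finite-dimensional motive. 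Applying Corollary~\ref{spreadkim} once more, over a family dominating $\overline{\MM_{0,2}}$, covers the whole component, and together with the previous paragraph this proves the corollary.

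The step I expect to be the main obstacle is the construction of the correspondence between $S'$ and its relative Jacobian: one needs a bona fide algebraic cycle --- not just a Hodge isometry of transcendental lattices --- and this forces one to deal with the existence of a (possibly twisted) universal sheaf and with the absence of a low-degree rational multisection; the safest route is to appeal to the general fact that derived-equivalent K3 surfaces have isomorphic transcendental Chow motives via the kernel's Mukai vector. A minor technical point is to supply, in both cases, smooth projective families of $q=0$ K3 surfaces surjecting onto the relevant closed moduli components, which is harmless after a finite cover. Finally, the Kuga--Satake Hodge conjecture and Voisin's conjecture for such $S$ can be added by the same mechanism: the generically defined correspondences produced in Theorems~\ref{main2} and~\ref{main3} propagate along the deformation by the spread argument (Lemma~\ref{spread}) and, in the $\MM_{0,2}$ case, along the Jacobian correspondence, using that the Kuga--Satake variety depends only on the transcendental Hodge structure.
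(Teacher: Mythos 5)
Your proposal is correct and follows essentially the same route as the paper: reduce to $\overline{\MM_{0,1}}\cup\overline{\MM_{0,2}}$ via the Artebani--Sarti classification, handle $\MM_{0,1}$ by Theorem \ref{main} plus the spread argument of Corollary \ref{spreadkim}, and transfer to $\MM_{0,2}$ through the (twisted) derived equivalence between an elliptic K3 surface and its Jacobian, which yields an isomorphism of Chow motives. The step you single out as the main obstacle --- producing a genuine algebraic correspondence from the Fourier--Mukai kernel --- is exactly what the paper outsources to \cite[Theorem 1]{FV} (cf.\ also \cite{Huy0}).
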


\begin{proof} Thanks to the classification result of Artebani--Sarti (Theorem \ref{as}), we know that $S$ is in the closure of $\MM_{0,1}$ or in the closure of $\MM_{0,2}$.
We have shown (Theorem \ref{main}) that any surface in $\MM_{0,1}$ is Kimura finite-dimensional; thanks to the spread argument (Corollary \ref{spreadkim}), the same is true for any surface in the closure of $\MM_{0,1}$.

As for $\MM_{0,2}$, thanks to the work of Kondo (Proposition \ref{kon}) we know that the general element $S^\prime\in \MM_{0,2}$ is the Jacobian elliptic fibration of an element $S\in\MM_{0,1}$.
This implies that $S^\prime$ and $S$ are twisted derived equivalent, i.e. there is an equivalence of derived categories
  \[ D^b(S^\prime, \alpha)\cong D^b(S)\ ,\]
  where $\alpha$ is a Brauer class \cite[Remark 4.9]{Huy}. This implies that these surfaces have isomorphic Chow motives:
  \[ h(S^\prime)\cong h(S)\ \ \ \hbox{in}\ \MM_{\rm rat} \]
 \cite[Theorem 1]{FV} (cf. also the argument of \cite{Huy0}), and so $S^\prime$ is Kimura finite-dimensional. Applying once more the spread argument (Corollary \ref{spreadkim}), we find that the same is true
 for any surface in the closure of $\MM_{0,2}$. This ends the proof.
 \end{proof}

Also the results from Section 4 and 5 can be extended to $\mathcal{M}_{0,1}$ and $\mathcal{M}_{0,2}$. For instance, Theorem \ref{main2} extends harmlessly to the component $\mathcal{M}_{0,1}$ of the moduli space of K3 surfaces with an order 3 non-symplectic automorphism.

\begin{proposition}\label{ksh01}
Conjecture \ref{kshc} holds true for $V=H^2(S,\QQ)$, for all K3 surfaces inside the component $\mathcal{M}_{0,1}$.
\end{proposition}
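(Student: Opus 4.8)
The plan is to mimic exactly the proof of Theorem \ref{main2}, the only difference being that there we started with a generic sextic complete intersection $S$ of the prescribed form, whereas now we allow $S$ to be an \emph{arbitrary} member of the component $\mathcal{M}_{0,1}$. By the Proposition of Artebani--Sarti quoted in Section 2, the generic element of $\mathcal{M}_{0,1}$ is a complete intersection of the form appearing in Theorem \ref{main2}, so for the generic $S$ the statement is already contained in Theorem \ref{main2}. The remaining task is therefore a \emph{deformation/spread argument}: propagate the Kuga--Satake correspondence from the generic point of $\mathcal{M}_{0,1}$ to every point.

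First I would set up the relevant family: let $\VV\to \mathcal{M}_{0,1}$ be the polarized variation of Hodge structure of K3 type given by $H^2$ of the universal family of K3 surfaces parametrized by (a level cover of) $\mathcal{M}_{0,1}$. By the ``Kuga--Satake in families'' result \cite[Proposition 6.4.10]{Huy}, after a finite \'etale base change $\mathcal{M}' \to \mathcal{M}_{0,1}$ there is an abelian scheme $A\to \mathcal{M}'$ whose fibre $A_b$ is the Kuga--Satake variety of $\VV_b$. Over the Zariski-dense open locus $\mathcal{M}'_\circ\subset\mathcal{M}'$ lying over the complete-intersection sextics, Theorem \ref{main2} (more precisely, the proof, which produces a correspondence \emph{generically defined} with respect to the base) yields a relative correspondence $\Gamma^\circ\in A^\ast(\Ss_\circ\times_{\mathcal{M}'_\circ} (A\times_{\mathcal{M}'_\circ} A))$ inducing the Kuga--Satake embedding fibrewise, where $\Ss\to\mathcal{M}'$ is the universal K3. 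Spreading out $\Gamma^\circ$ to a correspondence $\Gamma$ over all of $\mathcal{M}'$ (e.g.\ by taking the closure, after compactifying if necessary), the cycle $\Gamma$ restricts fibrewise over $\mathcal{M}'_\circ$ to a class inducing $\iota_{V_b}$ in cohomology. Since $\mathcal{M}'$ is irreducible and $\mathcal{M}'_\circ$ is dense, and since inducing a \emph{fixed} Hodge-theoretic map in $H^2$ is a closed condition (the difference between $\Gamma_\ast$ and $\iota_{V_b}$ is a section of a local system that vanishes on a dense open set, hence identically), the correspondence $\Gamma\vert_b$ induces the Kuga--Satake embedding of $H^2(S_b,\QQ)$ for \emph{every} $b\in\mathcal{M}'$. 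Descending from $\mathcal{M}'$ back to $\mathcal{M}_{0,1}$ (the finite base change is harmless since we only need existence of \emph{some} correspondence on $S\times A_b\times A_b$) then proves Conjecture \ref{kshc} for all $S\in\mathcal{M}_{0,1}$.

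The one point requiring a little care — and the place I expect the main (mild) obstacle — is the passage from ``the correspondence induces the right map in cohomology over a dense open set'' to ``it does so everywhere''. The clean way to argue this is to observe that, after the finite \'etale base change, the Kuga--Satake embeddings $\iota_{V_b}$ assemble into a global morphism of variations of Hodge structure $\VV\hookrightarrow R^2(\mathrm{pr})_\ast\QQ$ on $A\times_{\mathcal{M}'}A$, and the class of $\Gamma\vert_b$ in $\mathrm{Hom}(H^2(S_b),H^2(A_b^2))$ varies as a flat section of the corresponding local system of Hodge classes; a flat section vanishing on a dense open subset of an irreducible base vanishes identically. Alternatively one may invoke the Hodge-theoretic rigidity already used implicitly in \cite{cubic}. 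Everything else — the existence of the spread-out correspondence, the density of $\mathcal{M}'_\circ$ (which follows from the Artebani--Sarti description of the generic element of $\mathcal{M}_{0,1}$), and the descent along the finite base change — is routine.
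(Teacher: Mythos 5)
Your proposal is correct and follows essentially the same route as the paper: reduce to the generic member via the Artebani--Sarti description, invoke the Kuga--Satake-in-families result of Huybrechts to get an abelian scheme over a finite cover of $\mathcal{M}_{0,1}$, and spread the generically defined correspondence from Theorem \ref{main2} over the whole base, using flatness of the relevant sections of the local system to see that the extended correspondence still induces the Kuga--Satake embedding on every fibre. The paper's own proof is just a terser version of this ("the Kuga--Satake correspondence is generically defined for the family $\mathcal{M}_{0,1}$, by the same argument as in Theorem \ref{main2}"), so no further comment is needed.
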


\begin{proof}
As we have observed in the proof of Theorem \ref{main2}, it is enough that the Kuga-Satake correspondence is generically defined for the family $\mathcal{M}_{0,1}$. This is true by the same argument used in the proof of Theorem \ref{main2}, hence the Conjecture holds true for all K3 surfaces in $\mathcal{M}_{0,1}.$ 
\end{proof}

\begin{corollary}\label{rem01}
All the statements of Remark \ref{KSChow} hold true for a K3 surface inside $\mathcal{M}_{0,1}$.
\end{corollary}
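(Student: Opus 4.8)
The plan is to re-run the argument of Remark \ref{KSChow}, now feeding in the inputs available for the whole component $\mathcal{M}_{0,1}$. Recall that Remark \ref{KSChow} rests on three facts about a surface $S$ as in Theorem \ref{main}, with associated Kuga--Satake variety $A$: (i) the Kuga--Satake embedding $H^2(S,\QQ)\hookrightarrow H^2(A\times A,\QQ)$ is induced by a generically defined correspondence $\Gamma$; (ii) a left inverse $\Psi$ to $\Gamma$ is also generically defined, by Voisin's Hilbert scheme argument (cf. \cite[2.3]{cubic}); and (iii) $S$ has finite-dimensional motive. For $S\in\mathcal{M}_{0,1}$, fact (i) is precisely Proposition \ref{ksh01}, and fact (iii) is Corollary \ref{cor}, so the only point needing attention is fact (ii).

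For (ii) I would run Voisin's Hilbert scheme construction over a smooth quasi-projective variety $B'$ mapping finitely onto $\mathcal{M}_{0,1}$ --- the same device already used in the proofs of Theorem \ref{main2} and Proposition \ref{ksh01} --- together with the abelian scheme $A\to B'$ of Kuga--Satake varieties furnished by \cite[Proposition 6.4.10]{Huy} and the generically defined correspondence $\Gamma$ of Proposition \ref{ksh01}. This produces a relative correspondence $\Psi$ over $B'$ such that $\Psi|_b\circ\Gamma|_b$ is homologically equal to $\Delta_{S_b}$ on $H^2$ for the very general $b\in B'$. Since $\Psi\circ\Gamma-\Delta$ is a relative correspondence, its fibrewise cohomology class is locally constant, and hence the identity $(\Psi|_b)_\ast\circ(\Gamma|_b)_\ast=\mathrm{id}$ on $H^2(S_b,\QQ)$ holds for every $b$, in particular for the given $S$.

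With (i), (ii), (iii) in hand the motivic upgrade is verbatim that of Remark \ref{KSChow}. Composing $\Gamma$ and $\Psi$ with the refined Chow--K\"unneth projector onto the transcendental part, one gets correspondences whose composite acts as the identity on $H^2_{tr}(S)$, so $\Delta_{h^2_{tr}(S)}-\Psi\circ\Gamma$ is a homologically trivial self-correspondence of the finite-dimensional motive $h^2_{tr}(S)$, hence nilpotent by Kimura's nilpotence theorem. The geometric-series identity (Lemma 3.1 of \cite{SV}) then turns $\Gamma$ into a split monomorphism of Chow motives $h^2_{tr}(S)\hookrightarrow h^2(A^2)$, and applying $(-)_\ast$ gives the embedding of Chow groups $A^2_{hom}(S)\hookrightarrow A^2_{(2)}(A^2)$. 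This establishes all the statements of Remark \ref{KSChow} for $S$.

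The only genuine thing to verify is fact (ii): one must check that Voisin's Hilbert scheme construction of the inverse correspondence, and the ensuing spreading-out, are insensitive to the fact that we work over a finite cover of a moduli space rather than over one concrete family. Since this is dealt with in precisely the same way as in Proposition \ref{ksh01}, I expect no essential difficulty, and everything downstream of (ii) is a formal consequence of Kimura finite-dimensionality.
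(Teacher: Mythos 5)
Your proposal is correct and follows essentially the same route as the paper: the paper's proof simply combines Corollary \ref{cor} (finite-dimensionality) with Proposition \ref{ksh01} (algebraicity of the Kuga--Satake correspondence over $\mathcal{M}_{0,1}$) and then reruns the argument of Remark \ref{KSChow}, which is exactly your plan. Your extra care about point (ii) --- spreading Voisin's Hilbert scheme construction of the left inverse over a cover of $\mathcal{M}_{0,1}$ --- is a useful unpacking of what the paper leaves implicit in the phrase ``the same argument of Remark \ref{KSChow} then applies.''
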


\begin{proof}
In order to conclude, it is enough to observe that, by Corollary \ref{cor}, all K3 surfaces in $\mathcal{M}_{0,1}$ have finite-dimensional motive and, by Proposition \ref{ksh01}, their Kuga-Satake correspondence is algebraic. The same argument of Remark \ref{KSChow} then applies.
\end{proof}

\begin{proposition}
Conjecture \ref{kshc} holds for all K3 surfaces inside $\mathcal{M}_{0,2}$.
\end{proposition}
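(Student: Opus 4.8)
The plan is to reproduce the two-step structure of Corollary~\ref{cor} and Theorem~\ref{main2}: first establish the statement for the \emph{general} member of $\MM_{0,2}$ by transporting it along Kondo's description, and then propagate it to every member by the spreading argument of Theorem~\ref{main2}.

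\emph{Step 1: the general member.} Let $S^\prime\in\MM_{0,2}$ be general. By Proposition~\ref{kon} it is the Jacobian elliptic fibration of some $S\in\MM_{0,1}$, so, exactly as in the proof of Corollary~\ref{cor}, there is a twisted derived equivalence $D^b(S^\prime,\alpha)\cong D^b(S)$ and hence, by \cite{FV}, an isomorphism of Chow motives $h(S^\prime)\cong h(S)$ in $\MM_{\rm rat}$, realised by an algebraic correspondence which restricts to an isomorphism $h^2_{tr}(S^\prime)\cong h^2_{tr}(S)$ inducing a Hodge isometry of transcendental Hodge structures. Two consequences: first, the Kuga--Satake varieties of $S^\prime$ and $S$ are isogenous (an isogeny being a morphism, its graph is an algebraic correspondence); second, since the Kuga--Satake Hodge conjecture for a K3 surface depends only on the transcendental motive up to Hodge isometry and on the isogeny class of the Kuga--Satake variety — the contribution of the algebraic part $\mathrm{NS}(S)_\QQ\subset H^2(S,\QQ)$ being dealt with unconditionally by the Lefschetz $(1,1)$ theorem, as in the reductions of \cite{vGI} — the validity of Conjecture~\ref{kshc} transfers from $S$ to $S^\prime$. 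Now Conjecture~\ref{kshc} holds for $S$ by Proposition~\ref{ksh01} (indeed, by Corollary~\ref{rem01} the Kuga--Satake embedding for $S$ is induced by a genuine cycle). Composing this cycle with the algebraic correspondence $h(S^\prime)\cong h(S)$ and with the isogeny of Kuga--Satake varieties produces a cycle inducing the Kuga--Satake embedding for $S^\prime$, so Conjecture~\ref{kshc} holds for the general $S^\prime\in\MM_{0,2}$.

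\emph{Step 2: propagation.} Kondo's relative Jacobian construction, the resulting relative Fourier--Mukai kernel (hence the relative isomorphism of motives), and the comparison isogeny of Kuga--Satake varieties are all available in families over a smooth base $B$ dominating $\MM_{0,2}$; hence the cycle produced in Step~1 can be taken \emph{generically defined}. Combined with the ``Kuga--Satake in families'' statement \cite[Proposition~6.4.10]{Huy} — which, after a finite base change, produces an abelian scheme whose fibres are the Kuga--Satake varieties — this yields a relative correspondence whose restriction to the very general fibre induces the Kuga--Satake embedding. Being the Kuga--Satake embedding is a condition on the induced morphism of variations of Hodge structure, and a morphism of variations of Hodge structure over a connected base is determined by its restriction to any dense open subset; hence the restriction to \emph{every} fibre induces the Kuga--Satake embedding. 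This is exactly the mechanism used to pass from $\wt{F_b}$, $b\neq0$, to $\wt{F_0}$ in the proof of Theorem~\ref{main2}, and it shows that Conjecture~\ref{kshc} holds for $V=H^2(S^\prime,\QQ)$ for every $S^\prime\in\MM_{0,2}$.

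\emph{Main obstacle.} The delicate point is Step~2: one must verify that the derived equivalence issued from Kondo's Jacobian construction, together with the comparison of the two Kuga--Satake varieties, genuinely takes place over a base dominating $\MM_{0,2}$, so that the spreading argument of Theorem~\ref{main2} applies verbatim. I expect this to go through as in the earlier arguments of the paper, but it is the step deserving the most care.
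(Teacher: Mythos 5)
Your proposal is correct and follows essentially the same route as the paper: factor through $\MM_{0,1}$ via Kondo's Jacobian construction, use the Fu--Vial isomorphism of Chow motives to get an algebraic correspondence $h(S^\prime)\cong h(S)$, compose with the Kuga--Satake correspondence of $S$ (Proposition~\ref{ksh01}) and with the isogeny of Kuga--Satake varieties. Your Step~2 merely makes explicit the reduction to a general member that the paper dispatches with ``as above, we may reduce to a general element,'' i.e.\ the same generically-defined-correspondence-plus-spread mechanism already used in Theorem~\ref{main2}.
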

 
\begin{proof}
By \cite{KS}, we know that Kuga-Satake varieties exist for all the K3 surfaces in $\mathcal{M}_{0,2}$. We need to show that the embedding is given by an algebraic correspondence. We will do this by factoring it through $\mathcal{M}_{0,1}$. That is: as above, we may reduce to a general element $S^\prime$ in $\mathcal{M}_{0,2}$. Then, thanks to Kondo's work  there exists a correspondence $\delta \in A^*(S'\times S)$  that induces the isomorphism of motives $h(S')\cong h(S)$, for a certain $S\in \mathcal{M}_{0,1}$. Let us now denote by $\gamma_S\in A^*(S\times KS(V)\times KS(V))$ the Kuga-Satake correspondence for $S\in \mathcal{M}_{0,1}$. In order to conclude, it is enough to recall \cite[Section 4]{Huy} that the Kuga-Satake varieties of K3 surfaces with isomorphic Chow motives are isogenous. Hence the Kuga-Satake correspondence of $S'\in \mathcal{M}_{0,2}$ is the composition of $\delta,\ \gamma_S$ and the correspondence defining the isogeny, hence it is algebraic as well.
\end{proof} 

By Corollary \ref{rem01}, we have the following useful consequence.

\begin{corollary}
Conjecture  \ref{vo} holds true for all K3 surfaces in $\mathcal{M}_{0,1}$.
\end{corollary}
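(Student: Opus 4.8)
\emph{Proof proposal.} The plan is to reduce the statement to the motivic criterion of Proposition \ref{crit}, exactly as was done for the complete-intersection family in the proof of Theorem \ref{main3}. So I would aim to show that for every $S\in\mathcal M_{0,1}$, with Kuga--Satake variety $A$, there is an isomorphism of Chow motives
\[ h(S)\ \cong\ h^2_{tr}(S)\oplus\bigoplus\one(\ast)\ \cong\ \bigoplus_\chi h^2(A_\chi)\oplus\bigoplus\one(\ast)\ \ \hbox{in}\ \MM_{\rm rat}\ ,\]
with the $A_\chi$ abelian varieties; once this is in place the criterion applies verbatim to give Conjecture \ref{vo} for $S$.

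The first step is to invoke Corollary \ref{rem01}, which guarantees that all the conclusions of Remark \ref{KSChow} hold for an arbitrary $S\in\mathcal M_{0,1}$: there exist correspondences $\Gamma,\Psi$ with $\Psi_\ast\circ\Gamma_\ast=\ide$ on $H^2(S,\QQ)$, and, because $S$ has finite-dimensional motive (Corollary \ref{cor}), this lifts to an embedding of Chow motives $\Gamma\colon h^2_{tr}(S)\hookrightarrow h^2(A^2)$ that is split by $\Psi$. The standard decomposition $h(S)=\one\oplus h^2(S)\oplus\one(-2)$ together with $h^2(S)=h^2_{tr}(S)\oplus\bigoplus\one(-1)$ then shows $h(S)=h^2_{tr}(S)\oplus\bigoplus\one(\ast)$, and $h^2_{tr}(S)$ is a direct summand of $h^2(A\times A)$.

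The second step is the purely formal bookkeeping: rewrite $h^2(A\times A)$ as a direct summand of $\bigoplus_\chi h^2(A_\chi)$ for suitable abelian varieties $A_\chi$. Using the Deninger--Murre/Künneth decomposition, $h^2(A\times A)=h^2(A)\oplus\bigl(h^1(A)\otimes h^1(A)\bigr)\oplus h^2(A)$, and each summand is a direct summand of $h^2$ of an abelian variety (for the middle one, $h^1(A)\otimes h^1(A)$ sits inside $\wedge^2 h^1(A\times A)=h^2(A\times A)$). Hence $h^2_{tr}(S)$, being a direct summand of $h^2(A\times A)$, is a direct summand of $\bigoplus_\chi h^2(A_\chi)$, and since $h^2_{tr}(S)$ is carved out by an idempotent correspondence this realizes the displayed decomposition of $h(S)$. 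Applying Proposition \ref{crit} finishes the argument; since every $S\in\mathcal M_{0,1}$ is covered, the corollary follows.

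I do not expect a genuine obstacle here: the geometric and Hodge-theoretic content — finite-dimensionality and algebraicity of the Kuga--Satake class — is already secured by Corollary \ref{cor} and Corollary \ref{rem01}, and what remains is the routine manipulation of motive decompositions described above. The only point requiring a modicum of care is ensuring that the splitting of $\Gamma$ at the level of Chow motives (rather than merely cohomology) is legitimate, which is precisely where Kimura finite-dimensionality of $S$ is used, via the nilpotence theorem applied to $\Delta_S-\Psi\circ\Gamma$ on $h^2_{tr}(S)$.
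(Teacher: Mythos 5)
Your argument is correct and is essentially the paper's own: the paper likewise deduces this corollary by combining Corollary \ref{rem01} (which extends Remark \ref{KSChow} to all of $\mathcal M_{0,1}$) with the criterion of Proposition \ref{crit}, exactly as in the proof of Theorem \ref{main3}. The only difference is that you spell out the routine bookkeeping (splitting $h(S)$ into $h^2_{tr}(S)$ plus Lefschetz motives and realizing $h^2_{tr}(S)$ as a summand of $h^2$ of abelian varieties — note one could simply take $A\times A$ itself), which the paper leaves implicit.
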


We observe now that Remark \ref{KSChow} holds also for $\mathcal{M}_{0,2}.$ Hence one can argue as in Section 5, and by the results of Remark \ref{KSChow}, combined with Proposition \ref{crit}, we obtain that

\begin{corollary}
Conjecture  \ref{vo} holds true for all K3 surfaces in $\mathcal{M}_{0,2}$.
\end{corollary}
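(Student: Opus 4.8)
The plan is to reduce everything to the criterion of Proposition \ref{crit}: once $h(S)$ is shown to be isomorphic to a direct sum of motives $h^2(A_\chi)$ of abelian varieties together with Tate twists of $\one$, Voisin's conjecture for $S$ follows. So the only task is to verify this hypothesis for every $S\in\mathcal{M}_{0,2}$, and the two essential inputs have already been secured in this section: by Corollary \ref{cor} the motive $h(S)$ is finite-dimensional, and by the proposition just proved the Kuga--Satake embedding $H^2(S,\QQ)\hookrightarrow H^2(A\times A,\QQ)$, with $A=KS(H^2(S,\QQ))$, is induced by an algebraic correspondence.

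First I would observe that these two facts are precisely what the argument of Remark \ref{KSChow} requires: combining the algebraicity of the Kuga--Satake class with Voisin's Hilbert-scheme argument (which produces a left inverse in cohomology) one obtains correspondences $\Gamma,\Psi$ with $\Psi_\ast\circ\Gamma_\ast=\operatorname{id}$ on $H^2(S,\QQ)$, and then finite-dimensionality of $h(S)$ upgrades this cohomological identity to an embedding of Chow motives $\Gamma\colon h^2_{tr}(S)\hookrightarrow h^2(A^2)$ in $\MM_{\rm rat}$. In other words, Remark \ref{KSChow} holds verbatim for $S\in\mathcal{M}_{0,2}$.

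Next I would feed this into the standard refined Chow--K\"unneth decomposition of a K3 surface, $h(S)=\one\oplus\one(-1)^{\oplus\rank\Pic(S)}\oplus h^2_{tr}(S)\oplus\one(-2)$, so that $h(S)$ is a direct sum of Lefschetz motives and the transcendental summand $h^2_{tr}(S)$, the latter being cut out by an idempotent inside $h^2(A^2)$, the weight-$2$ Deninger--Murre component of the abelian variety $A^2$. Arguing exactly as in the proof of Theorem \ref{main3}, this places $h(S)$ in the shape demanded by Proposition \ref{crit}, which yields Conjecture \ref{vo} for $S$. Finally, since all the correspondences involved are generically defined over the relevant parameter base, the conclusion propagates from the general member of $\mathcal{M}_{0,2}$ to every K3 surface in $\mathcal{M}_{0,2}$.

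I do not expect a genuinely new obstacle at this stage: the delicate geometry --- finite-dimensionality of the motive via the cuspidal cyclic cubic fourfolds attached to $\mathcal{M}_{0,1}$ and Kondo's identification of the general surface in $\mathcal{M}_{0,2}$ as the Jacobian elliptic fibration of one in $\mathcal{M}_{0,1}$, together with the algebraicity of the Kuga--Satake correspondence --- is already in place. The only point needing a little care is that Proposition \ref{crit} is being applied with $h^2_{tr}(S)$ appearing as a direct summand of $h^2$ of an abelian variety rather than literally as such an $h^2$; this is handled just as it is in Theorem \ref{main3}.
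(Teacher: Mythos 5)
Your proposal is correct and follows exactly the paper's route: finite-dimensionality from Corollary \ref{cor} plus the algebraicity of the Kuga--Satake correspondence for $\mathcal{M}_{0,2}$ give that Remark \ref{KSChow} holds for these surfaces, and then Proposition \ref{crit} is applied as in the proof of Theorem \ref{main3}. The paper states this in one sentence; your write-up merely spells out the same steps, including the (correctly handled) point that $h^2_{tr}(S)$ sits as a summand of $h^2(A^2)$ rather than being literally an $h^2$ of an abelian variety.
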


 \begin{remark}\label{pity} We have tried in vain to establish Kimura finite-dimensionality for surfaces in $\MM_{3,0}$. These surfaces can also be described explicitly as certain complete intersections in $\PP^4$ \cite[Proposition 4.7]{AS}. Just as in the argument of Theorem \ref{main}, we can relate them to certain singular cubic fourfolds; these singular cubic fourfolds are degenerations of certain smooth cubic fourfolds with an order 3 non-symplectic automorphism (this is the family described in \cite[Example 6.6]{BCS}). The problem is that (contrary to the situation of Theorem \ref{main}), these smooth cubic fourfolds are {\em not\/} known to be Kimura finite-dimensional, and so our argument breaks down here.
  \end{remark}

   \begin{remark}
From \cite{AS}, we know that the generic surface in the two families we consider has Picard number 2. We observe that 
examples of K3 surfaces with Picard number 2 and finite-dimensional motive have already been discovered in \cite[Corollary 2]{Ped}. Pedrini's examples are isolated in the moduli space, because of \cite[Theorem 5]{LSY}, while ours move in 9-dimensional families.
\end{remark} 
 
  \vskip1.0cm
\begin{nonumberingcon} 
The authors have no competing interests to declare that are relevant to the content of this article.
\end{nonumberingcon}

 \vskip1cm
\begin{nonumberingt} 
Thanks to Samuel Boissi\`ere for an inspiring talk on \cite{BHS} in Strasbourg (May 2023). RL thanks MB for an invitation to sunny Montpellier (June 2023), where this work was initiated. Thanks to the referee for helpful and insightful comments.
\end{nonumberingt}

\vskip1cm

\end{document}